\newtheorem*{rep@theorem}{\rep@title}
\newcommand{\newreptheorem}[2]{%
\newenvironment{rep#1}[1]{%
 \def\rep@title{#2 \ref{##1}}%
 \begin{rep@theorem}}%
 {\end{rep@theorem}}}
\newtheorem{theorem}{Theorem}[section]
\newtheorem{lemma}[theorem]{Lemma}
\newtheorem{proposition}[theorem]{Proposition}
\newtheorem{corollary}[theorem]{Corollary}
\newtheorem*{theorem*}{Theorem}
\theoremstyle{remark}
\newtheorem{remark}[theorem]{Remark}
\newtheorem{definition}[theorem]{Definition}
\newtheorem{exam}[theorem]{Example}
\numberwithin{equation}{section}
\begin{document}
\title[($n$)-solvability and Milnor's invariants]{The ($n$)-solvable filtration of the link concordance group and Milnor's invariants}

\author{Carolyn Otto}

\address{Department of Mathematics \\ University of Wisconsin-Eau Claire}
\email{ottoa@uwec.edu}


\date{\today}

\subjclass[2000]{57M25}

\keywords{Milnor's invariants, solvable filtration, link concordance}

\begin{abstract}

We establish several new results about both the ($n$)-solvable filtration, $\{\mathcal{F}_n^m\}$, of the set of link concordance classes and the ($n$)-solvable filtration of the string link concordance group.  We first establish a relationship between Milnor's invariants and links, $L$, with certain restrictions on the 4-manifold bounded by $M_L$.  Using this relationship, we can relate ($n$)-solvability of a link (or string link) with its Milnor's $\overline{\mu}$-invariants. Specifically, we show that if a link is ($n$)-solvable, then its Milnor's invariants vanish for lengths up to $2^{n+2}-1$. Previously, there were no known results about the ``other half" of the filtration, namely $\mathcal{F}_{n.5}^m/\mathcal{F}_{n+1}^m$.   We establish the effect of the Bing doubling operator on ($n$)-solvability and using this, we show that  $\mathcal{F}_{n.5}^m/\mathcal{F}_{n+1}^m$ is nontrivial for links (and string links) with sufficiently many components.  Moreover, we show that these quotients contain an infinite cyclic subgroup.  We also show that links and string links modulo (1)-solvability is a nonabelian group.  We show that we can relate other filtrations with Milnor's invariants.  We show that if a link is $n$-positive, then its Milnor's invariants will also vanish for lengths up to $2^{n+2}-1$. Lastly, we prove that the Grope filtration, $\{\mathcal{G}_n^m\}$, of the set of link concordance classes is not the same as the ($n$)-solvable filtration.
\end{abstract}

\maketitle

\section{Introduction}
Much work has been done in the quest of understanding the ($n$)-solvable filtration.  In particular, many have studied successive quotients of this filtration and some of their contributions can be found in~\cite{Cha4},~\cite{CH2},~\cite{CHL3}, and ~\cite{Ha2}.

For example, Harvey first showed that $\mathcal{F}_n^m/\mathcal{F}^m_{n+1}$ is a nontrivial group that contains an infinitely generated subgroup~\cite{Ha2}.  She also showed that this subgroup is generated by boundary links (links with components that bound disjoint Seifert surfaces).  Cochran and Harvey generalized this result by showing that $\mathcal{F}^m_n/\mathcal{F}^m_{n.5}$ contains an infinitely generated subgroup~\cite{CH2}.  Again, this subgroup consists entirely of boundary links.

Up to this point, little has been known about the relationship of Milnor's invariants and ($n$)-solvability. We first establish the following relationship, which is the main theorem of this paper.

\begin{reptheorem}{theorem:newmain}Suppose $L$ is a link whose zero-framed surgery, $M_L$ bounds an orientable and compact 4-manifold $W$ that satisfies
\begin{itemize}
 \item[1.] $H_1(M_L) \rightarrow H_1(W; \mathbb{Z})$ is an isomorphism induced by the inclusion map;
  \item[2.]$H_2(W;\mathbb{Z})$ has a basis consisting of connected compact oriented surfaces $\{L_i\}$ with $\pi_1(L_i) \subset \pi_1(W)^{(n)}$.
   \end{itemize}
   Then $\bar \mu_L (I) = 0$ where $|I| \leq 2^{n+2}-1$.
\end{reptheorem}
Using this theorem we obtain the following relationship.
\begin{repcorollary}{theorem:main}If $L$ is an $(n)$-solvable link (or string link), then $\bar \mu_L(I)=0$ for $|I| \leq 2^{n+2}-1$.
\end{repcorollary}

In other words, if a link (or string link) is ($n$)-solvable, then all of its $\bar \mu$-invariants will vanish for lengths less than or equal to $2^{n+2}-1$.
Moreover, this theorem is sharp in the sense that we exhibit ($n$)-solvable links with $\bar \mu(I) \not = 0$ for $|I| = 2^{n+2}$.

We can also obtain a relationship between Milnor's invariants and other filtrations.  Specifically, we have the following result that relates Milnor's invariants to $n$-positive, $n$-negative and $n$-bipolar filtrations.  The definitions of these filtrations can be found in ~\cite{CHH}.

\begin{repcorollary}{cor:newresult}
If $L$ is an $n$-positive, $n$-negative, or $n$-bipolar link, then $\bar \mu_L(I)=0$ for $|I| \leq 2^{n+2}-1$.
\end{repcorollary}

We study the effects of Bing doubling on ($n$)-solvability. We show that solvability is not only preserved under this operator, but it increases the solvability by one.

\begin{repproposition}{proposition:bingdouble}If $L$ is an $(n)$-solvable link, then $BD(L)$ is $(n+1)$-solvable. Moreover,
if $L$ is an $(n.5)$-solvable link, then $BD(L)$ is $((n+1).5)$-solvable.
\end{repproposition}

Until this point, nothing was known about the ``other half" of the filtration, $\mathcal{F}_{n.5}^m/\mathcal{F}_{n+1}^m$. Using the above results, we show that the ``other half" of the ($n$)-solvable filtration is nontrivial.

\begin{reptheorem}{theorem:nontrivial}$\mathcal{F}^m_{n.5}/\mathcal{F}^m_{n+1}$ contains an infinite cyclic subgroup for $m \geq 3*2^{n+1}$.
\end{reptheorem}

The examples used come from iterated Bing doubles of links with nonvanishing $\bar \mu$-invariants. Thus, our examples are not concordant to boundary links, so the subgroups that they will generate will be different than those previously detected.  The result of Theorem~\ref{theorem:nontrivial} is still unknown for knots.

Since the knot concordance group, $\mathcal{C}$, is abelian, all successive quotients of the ($n$)-solvable filtration are abelian.  However, it is known that $\mathcal{C}^m$ is nonabelian for $m \geq 2$~\cite{LD}.  We have have shown that certain successive quotients are not abelian.

\begin{reptheorem}{theorem:abelian}$\mathcal{F}^m_{-0.5}/\mathcal{F}^m_1$ is nonabelian for $m \geq 3$.

\end{reptheorem}

Similar to the relationship between ($n$)-solvability and $\bar \mu$-invariants, we establish a relationship between $\bar \mu$-invariants and a link in which all of its components bound disjoint gropes of height $n$.  This relationship says that if all components of a link bound disjoint gropes of a certain height, then its $\bar \mu$ invariants vanish for certain lengths.  See Section 6 for the definition of a grope.

\begin{repcorollary}{cor:grope} A link $L$ with components that bound disjoint Gropes of height $n$ has $\bar \mu_L(I) = 0$ for $|I|\leq 2^{n}$.
\end{repcorollary}

A result of Lin~\cite{Lin} states that $k$-cobordant links will have the same $\bar \mu$-invariants up to length $2k$. Using this result, the proof of this proposition relies on showing that $L$ is $2^{n+1}$-cobordant to a slice link.

These two filtrations are related by the fact that $\mathcal{G}^m_{n+2} \subseteq \mathcal{F}^m_n$ for all $n \in \mathbb{N}$ and $m \geq 1$~\cite{COT}.  A natural question is whether or not these filtrations are actually the same.  We show that these filtrations differ at each stage.

\begin{repcorollary}{cor:main}$\mathcal{F}^m_n/\mathcal{G}^m_{n+2}$ is nontrivial for $m \geq 2^{n+2}$.  Moreover, $\mathbb{Z} \subset \mathcal{F}^m_n/\mathcal{G}^m_{n+2}$.
\end{repcorollary}

It is still unknown whether the previous result holds for knots.
\section{Preliminaries}
A \textbf{knot} is an embedding $S^1 \hookrightarrow S^3$.  The set of knots modulo concordance forms a group under the operation of connected sum, known as the knot concordance group $\mathcal{C}$. Two knots, $K$ and $J$ are said to be \emph{concordant} if $K \times \{0\}$ and $J \times \{1\}$ cobound a smoothly embedded annulus in $S^3 \times [0,1]$.

An \textbf{$m$-component link} is an embedding $\coprod_m S^1 \hookrightarrow S^3$. The connected sum operation is not well defined for links.  Therefore, in order to define a group structure on links, it is necessary to study string links.
\begin{definition}
Let $D$ be the unit disk, $I$ the unit interval and $\{p_1, p_2, \ldots, p_k\}$ be $k$ points in the interior of $D$.  A \textbf{$k$-component (pure) string link} is a smooth proper embedding $\sigma : \coprod_{i=1}^k I_i \rightarrow D \times I$ such that
\begin{align*}
\sigma |_{I_i}(0)&=\{p_i\} \times \{0\};\\
\sigma |_{I_i}(1)&=\{p_i\} \times \{1\}.
\end{align*}
The image of $I_i$ is called the $i^{th}$ \emph{string} of the string link.  An orientation on $\sigma$ is induced by the orientation of $I$.  Two string links $\sigma$ and $\sigma'$ are said to be \emph{equivalent} if there is an orientation preserving homeomorphism $h:D^2 \times I \rightarrow D^2 \times I$ such that $h$ fixes the boundary piecewise and $h(\sigma)=\sigma'$.
\end{definition}

The operation on string links is the stacking operation seen in the braid group.  If $L_1$ and $L_2$ are in $\mathcal{C}^m$, then $L_1L_2$ is the string link obtained by stacking $L_1$ on top of $L_2$.

The notion of concordance can be generalized for string links, see Figure~\ref{fig:stringlinkconcordance}.
\begin{definition}
Two $m$-component string links, $\sigma_1, \sigma_2$ are \textbf{concordant} if there exists a smooth embedding
$H:\coprod_m (I \times I) \rightarrow B^3 \times I$ that is transverse to the boundary and such that
$H|_{\coprod_m I\times \{0\}}=\sigma_1$, $H|_{\coprod_m I\times \{1\}}=\sigma_2$, and $H|_{\coprod_m \partial I\times I}=j_0 \times id_I$ where $j_0: \coprod_m \partial I \rightarrow S^2$.
\end{definition}

\begin{figure}[h!]
  \centering
 \includegraphics[height=4cm]{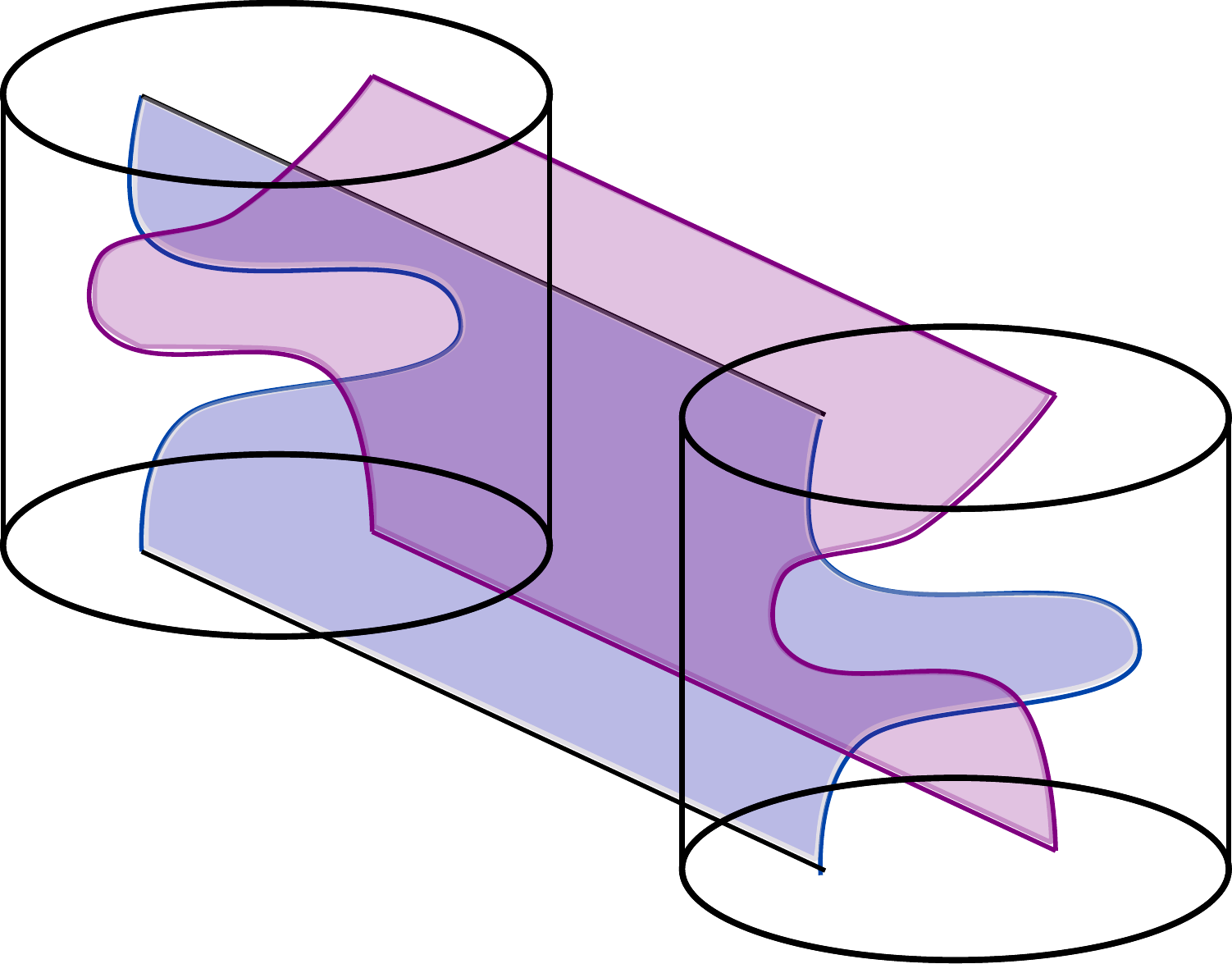}
 \caption[String link concordance]%
 {String link concordance}
 \label{fig:stringlinkconcordance}
\end{figure}

Under the operation of stacking, the concordance classes of $m$-component string links form a group, denoted $\mathcal{C}^m$, and is known as the string link concordance group.  The identity class of this group is the class of slice string links. The inverses are the string links obtained by reflecting the string link about $D \times \{1/2\}$ and reversing the orientation.  When $m=1$, $\mathcal{C}^m$ is the knot concordance group.  For $m \geq 2$, it has been shown that $\mathcal{C}^m$ is not abelian~\cite{LD}.

If $L$ is a string link, the closure of $L$, denoted $\hat L$, is the ordered, oriented link in $S^3$ obtained by gluing $\partial (D^2\times I)$ to $\partial (D^2 \times I)$ of the standard trivial string link using the identity map, see Figure~\ref{fig:closure}.  This gives a canonical way to obtain a link from a string link.  If two string links are concordant, then their closures are concordant as links.

\begin{figure}[h!]
 \centering
\subfigure[An example of a three component string link]{\includegraphics[height=4cm]{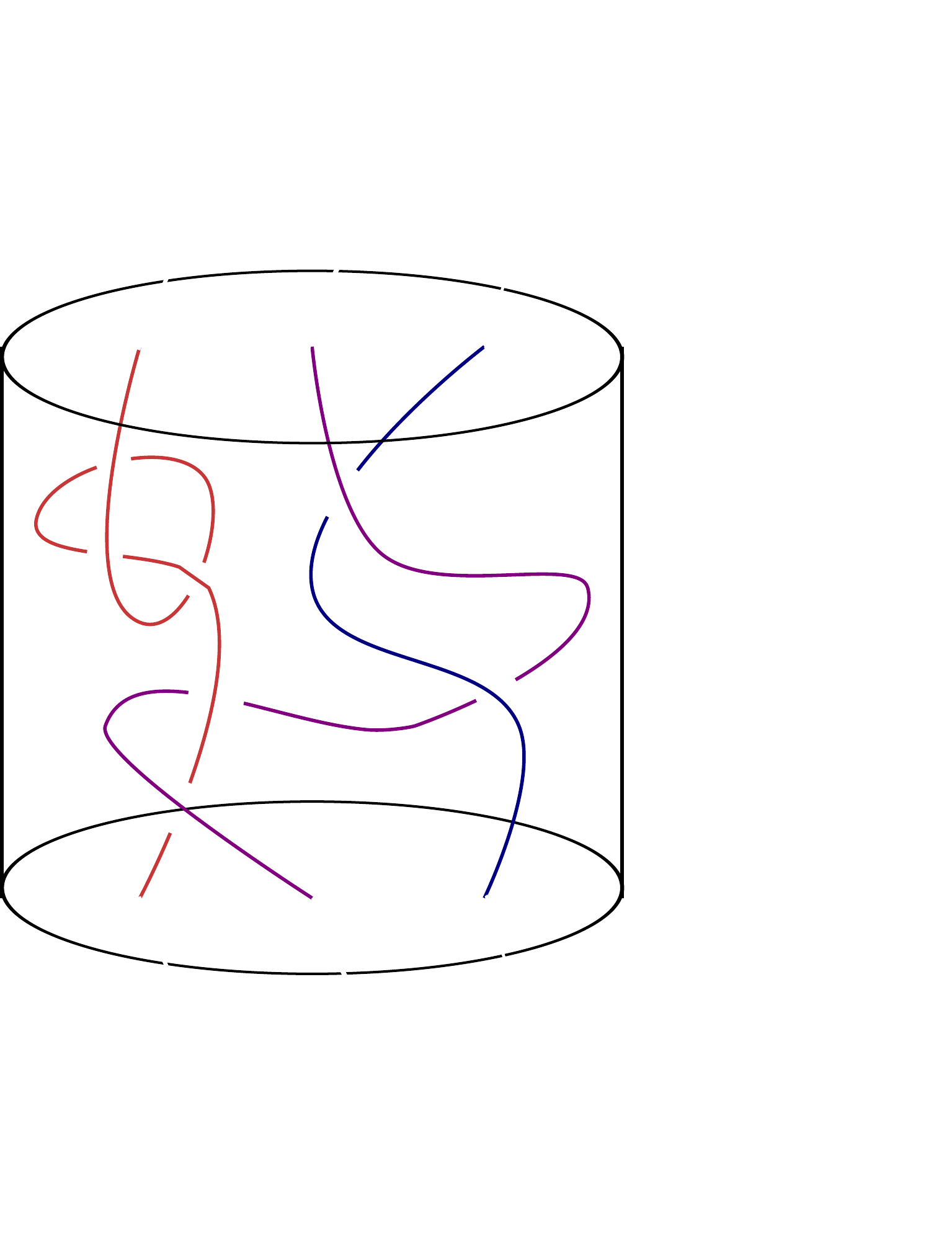}
\label{fig:stringlink}}\qquad \qquad
\subfigure[The closure of a string link]{\includegraphics[height=4cm]{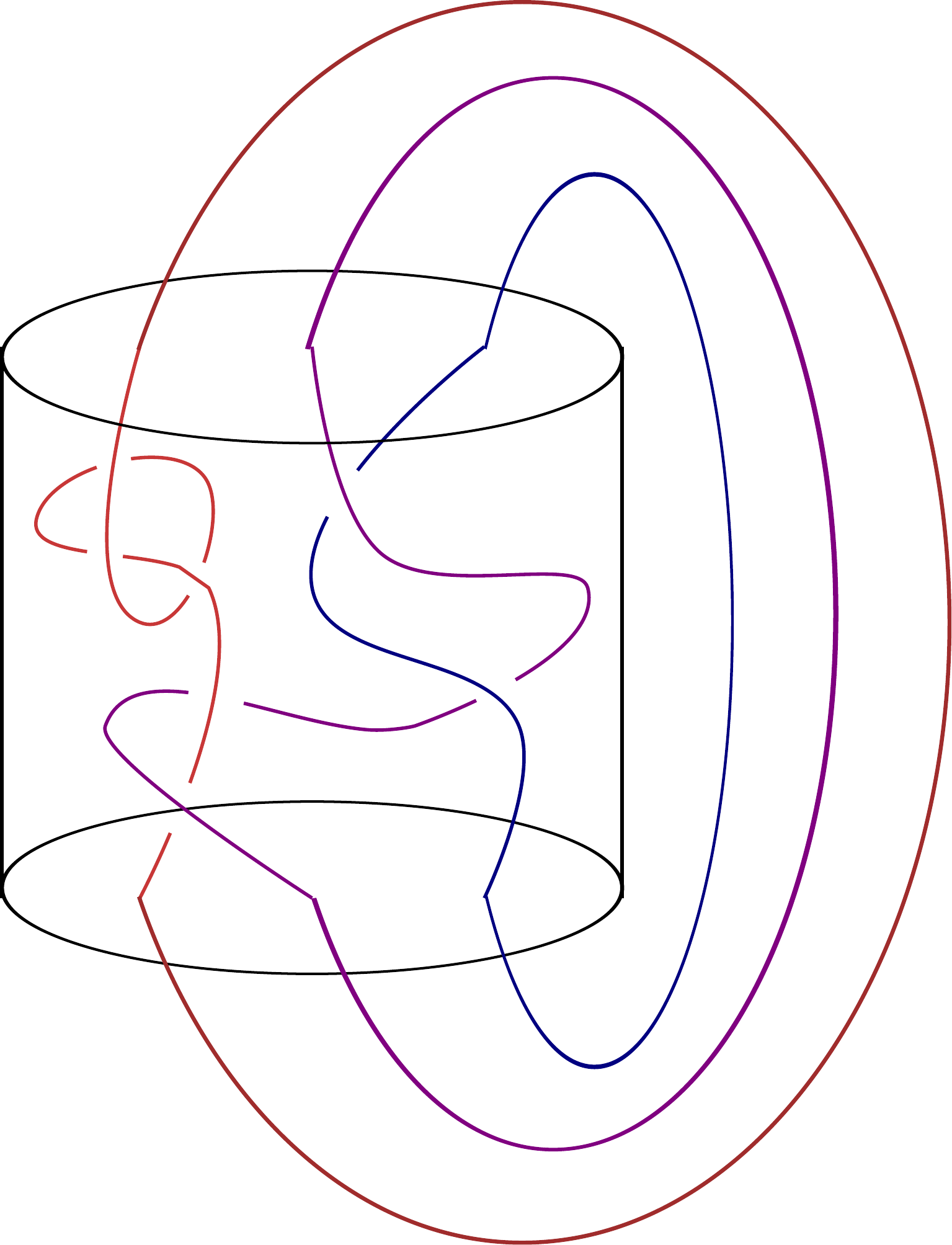}
\label{fig:closure}}
\caption[Example of a string link]%
{A string link and its closure}
\end{figure}

Every link has a string link representative. In other words, given any link, $L$ in $S^3$, there exists a string link $\sigma$ such that $\hat \sigma$ is isotopic to $L$.~\cite{HL1}

In order to study the structure of $\mathcal{C}^m$, Cochran, Orr and Teichner~\cite{COT} defined the ($n$)-solvable filtration, $\{\mathcal{F}^m_n\}$,
$$\{0\} \subset \cdots \subset \mathcal{F}^m_{n+1} \subset \mathcal{F}^m_{n.5} \subset \mathcal{F}^m_n \subset \cdots \subset \mathcal{F}^m_{0.5} \subset \mathcal{F}^m_0 \subset \mathcal{C}^m.$$

\begin{definition}
An $m$-component link $L$ is \textbf{($n$)-solvable} if the zero-framed surgery, $M_L$, bounds a compact, smooth 4-manifold, $W^4$, such that the following conditions hold:

i)  $H_1(M_L; \mathbb{Z}) \cong \mathbb{Z}^m$ and $H_1(M_L) \rightarrow H_1(W; \mathbb{Z})$ is an isomorphism induced by the inclusion map;

ii) $H_2(W; \mathbb{Z})$ has a basis consisting of connected, compact, oriented surfaces, $\{L_i, D_i\}_{i=1}^r$, embedded in $W$ with trivial normal bundles, where the surfaces are pairwise disjoint except that, for each $i$, $L_i$ intersects $D_i$ transversely once with positive sign;

iii) For all $i\text{, } \pi_1(L_i) \subset \pi_1(W)^{(n)}$ and  $\pi_1(D_i) \subset \pi_1(W)^{(n)}$ where $\pi_1(W)^{(n)}$ is the $n^{th}$ term of the derived series.  The derived series of a group $G$, denoted $G^{(n)}$ is defined recursively by $G^{(0)}:=G$ and $G^{(i)}:=[G^{(i-1)},G^{(i-1)}]$.

\end{definition}

The manifold $W$ is called an \textbf{($n$)-solution} for $L$ and a string link is \textbf{($n$)-solvable} if its closure in $S^3$ is an ($n$)-solvable link.

A link is \textbf{($n.5$)-solvable} if, in addition to the above, $ \pi_1(L_i) \subset \pi_1(W)^{(n+1)}$ for all $i$.  In this instance, $W$ is called an \textbf{($n.5$)-solution} for $L$.

We define the ($n$)-solvable filtration of the string link concordance group for $m \geq 1$ by setting $\mathcal{F}_n^m$ to be the set of ($n$)-solvable string links for $n \in \frac{1}{2}\mathbb{N}_0$.  It is known that $\mathcal{F}_n^m$ is a normal subgroup of $\mathcal{C}^m$ for all $m \geq 1$ and $n \in \frac{1}{2}\mathbb{N}_0$.  For convenience, $\mathcal{F}^m_{-0.5}$ will denoted the set of string links with all pairwise linking numbers equal to zero.

In the early 1950's, John Milnor defined a family of higher order linking numbers known as $\bar \mu$-invariants for oriented, ordered links in $S^3$~\cite{M1},~\cite{M2}.  These numbers are not link invariants in the typical sense since there is some indetermincy due to the choice of meridians of a link;  however, as invariants of string links they are well defined~\cite{HL1}.  In general, Milnor's invariants determine how deep the longitudes of each component lie in the lower central series of the link group. We will show in Theorem~\ref{theorem:main} that these invariants give information about the solvable filtration.  We will use this relationship to prove Theorem~\ref{theorem:nontrivial}.

Suppose $L$ is an $m$-component link in $S^3$.  Let $G=\pi_1(S^3-L)$ be the fundamental group of the complement of $L$ in $S^3$.  The \emph{lower central series} of $G$, denoted $G_i$ is recursively defined by $G_1:=G$ and $G_i:=[G_{i-1},G]$.  It is worthy to note that the derived series and the lower central series of a group $G$ are related by $G^{(n)} \subset G_{2^n}$.  Since $[G_r, G_s] \subseteq G_{r+s}$, it is a straight forward computation to achieve the relation.

Consider the nilpotent quotient group $G/G_k$.  A presentation of this group, given by Milnor~\cite{M2}, can be written
\begin{equation}G/G_k \cong \langle \alpha_1, \alpha_2, \dots, \alpha_m | [\alpha_1, l_1],[\alpha_2, l_2], \dots [\alpha_m, l_m], F_k \rangle \end{equation}
where $\alpha_1, \dots, \alpha_m$ are a choice of $m$ meridians for $L$, $F_k$ is the $k^{th}$ term of the lower central series of $F=F\langle \alpha_1, \dots, \alpha_m \rangle$, the free group on $m$ generators and $l_i$ is the $i^{th}$ longitude of L written as a product of the $\alpha_i$'s.  

 Let $\mathbb{Z}[[X_1, \dots, X_m]]$ be the ring of power series in $m$ noncommuting variables.  The \emph{Magnus embedding} is a map $E:\mathbb{Z}F \rightarrow \mathbb{Z}[[X_1, \dots, X_m]]$ defined by sending $\alpha_i \mapsto 1+X_i$ and $\alpha_i^{-1} \mapsto 1-X_i+X_i^2-X_i^3+\cdots$ for $1 \le i \le m$. Let $I=i_1i_2\dots i_{k-1}i_k$ be a string of integers amongst $\{1,\dots, m\}$ with possible repeats. The Magnus embedding of the longitude $l_{i_k}$ written as an element of $F$ (modulo $G_k$) has the form
 $$ E(l_{i_k})=1+\Sigma \mu_L(i_1\dots i_{k-1})X_{i_1}\cdots X_{i_{k-1}}.$$
 Milnor's invariant $\bar \mu_L (I)$ is defined as the residue class of $\mu_L(I)$ modulo the greatest common divisor of $\mu_L(\tilde I)$ where $\tilde I$ is any string of integers obtained from $I$ by deleting at least one integer (excluding $i_k$) and cyclically permuting the rest.  We refer to $|I|$ as the length of the Milnor's invariant. It is useful to note that the first nonvanishing $\bar \mu$-invariant, $\bar \mu_L(I)$ will be $\mu_L(I)$ since it is well-defined.

For $\bar \mu$-invariants of length two, the calculation measures the linking between two components, i.e. $\bar \mu_L(ij)$ is the linking number between the $i^{th}$ and $j^{th}$ components of $L$.  It is also known that $\bar \mu$-invariants are concordance invariants~\cite{CA}.

The following is a classical and well-known result of Milnor~\cite{M2}.

\begin{theorem}[Milnor]
The longitudes of $L$ lie in $G_{k-1}$ if and only if $F/F_k \cong G/G_k$.  In other words, $\bar \mu_L(I) = 0$ for $|I| \leq k-1$ if and only if $F/F_k \cong G/G_k$.
\label{theorem:milnor}
\end{theorem}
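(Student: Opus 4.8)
The plan is to compare the two towers of nilpotent quotients through a single canonical map and to decide exactly when that map is injective, reading the longitude condition off the Magnus embedding at the end. Since the meridians $\alpha_1,\dots,\alpha_m$ generate $G$ (for instance from a Wirtinger presentation), the assignment $\alpha_i\mapsto(i\text{-th meridian})$ defines a surjection $\phi\colon F\twoheadrightarrow G$. For any surjection one checks by induction that $\phi(F_q)=G_q$ for all $q$, since $\phi([F_{q-1},F])=[G_{q-1},G]$; hence $\phi$ descends to a surjection $\eta_k\colon F/F_k\twoheadrightarrow G/G_k$ for every $k$. The entire theorem thus reduces to deciding when $\eta_k$ is injective.

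Next I would identify $\ker\eta_k$ using the presentation of $G/G_k$ recalled above (the Chen--Milnor presentation). That presentation exhibits $G/G_k$ as $F/F_k$ modulo the normal closure of the longitude relators $[\alpha_i,l_i]$, where each $l_i$ is read as a word $w_i$ in the generators of $F$. Consequently $\eta_k$ is an isomorphism if and only if every relator is already trivial in $F/F_k$, i.e.\ if and only if $[\alpha_i,w_i]\in F_k$ for all $i$.

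The third step, which is the technical heart, is to translate $[\alpha_i,w_i]\in F_k$ into a depth statement about the longitude via the Magnus embedding $E$, using that $g\in F_j$ exactly when $E(g)-1$ has no terms of degree $<j$. If $w_i\in F_d\setminus F_{d+1}$, then $E(w_i)=1+P_d+(\text{higher order})$ with $P_d\neq0$ homogeneous of degree $d$, whence $E([\alpha_i,w_i])=1+(X_iP_d-P_dX_i)+(\text{higher order})$. The commutator then lies in $F_{d+1}\setminus F_{d+2}$, so that $[\alpha_i,w_i]\in F_k\iff w_i\in F_{k-1}$, provided the degree-$(d+1)$ term $X_iP_d-P_dX_i$ is nonzero; this fails only if $P_d$ is a scalar multiple of $X_i^{d}$. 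Ruling this out is the main obstacle: I must show the leading term of a longitude is never a pure power of its own meridian variable. Here the geometry enters, since the preferred $0$-framed longitude is nullhomologous (forcing the self-linking coefficient to vanish) and the shuffle relations among the coefficients of $E(w_i)$ obstruct a pure-power leading term. Granting this, $w_i\in F_{k-1}$ says $E(w_i)=1+(\text{degree}\ge k-1)$, i.e.\ every coefficient of a monomial of degree $\le k-2$ vanishes; as a length-$j$ string $I$ indexes the coefficient of a degree-$(j-1)$ monomial, this is precisely $\mu_L(I)=0$ for $|I|\le k-1$. A short induction shows the indeterminacy in $\bar\mu_L(I)$ over these lengths is trivial, so the condition is equivalent to $\bar\mu_L(I)=0$ for $|I|\le k-1$, which establishes the ``in other words'' clause.

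Finally I would reconcile the word-depth condition $w_i\in F_{k-1}$ with the group-depth condition $l_i\in G_{k-1}$ by induction on $k$, the base case $k=2$ being $F/F_2\cong G/G_2\cong\mathbb{Z}^m$. If $\eta_k$ is an isomorphism, then it carries $F_{k-1}/F_k$ isomorphically onto $G_{k-1}/G_k$, so $w_i\in F_{k-1}$ forces $l_i=\phi(w_i)\in G_{k-1}$. Conversely, if all $l_i\in G_{k-1}\subseteq G_{k-2}$, the inductive hypothesis gives that $\eta_{k-1}\colon F/F_{k-1}\to G/G_{k-1}$ is an isomorphism; since $w_i$ maps to $l_i\equiv1\pmod{G_{k-1}}$ under this isomorphism, we conclude $w_i\in F_{k-1}$. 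This closes the loop and yields ``the longitudes lie in $G_{k-1}$'' as the equivalent geometric condition.
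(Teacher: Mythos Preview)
The paper does not prove this theorem; it is quoted as a classical result of Milnor with a citation to \cite{M2}, so there is no proof in the paper to compare your proposal against. Your outline is essentially the standard derivation of the statement from the Chen--Milnor presentation (equation~(2.1), itself quoted from \cite{M2}), and the logic is sound.

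Two points deserve tightening. First, ``the preferred $0$-framed longitude is nullhomologous'' is not correct in the full link complement in general: the class of $l_i$ in $H_1(S^3\setminus L)$ records the linking numbers $\mathrm{lk}(L_i,L_j)$, which need not vanish. What you actually need, and what the $0$-framing gives, is only that the $\alpha_i$-exponent sum in $l_i$ is zero. From this the vanishing of \emph{every} coefficient of $X_i^{\,d}$ in $E(l_i)$ follows at once by applying the retraction $F\twoheadrightarrow\langle\alpha_i\rangle$ killing the other generators and using functoriality of the Magnus map; your shuffle-relation argument also works (since $c_{X_i}=0$ forces $c_{X_i^{\,d}}=c_{X_i}^{\,d}/d!=0$) but is less direct. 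Either way, the obstruction ``$P_d$ is a multiple of $X_i^{\,d}$'' never occurs, and your equivalence $[\alpha_i,w_i]\in F_k\iff w_i\in F_{k-1}$ goes through. Second, in your final paragraph you write $l_i=\phi(w_i)$, but the presentation only supplies $w_i$ with $\phi(w_i)\equiv l_i\pmod{G_k}$. This does not damage the conclusions, since $G_k\subset G_{k-1}$, but it is precisely why the induction you set up is genuinely needed rather than cosmetic.
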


The following corollary allows us to detect whether certain Milnor's invariants are zero using the fundamental group of $M_L$, the zero-framed surgery on $L$.

\begin{corollary}
$F/F_{k+1} \cong G/G_{k+1}$ if and only if $F/F_{k} \cong J/J_{k}$, where $J=\pi_1(M_L)$.
\label{lemma:iso}
\end{corollary}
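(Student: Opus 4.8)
The plan is to realize $J$ explicitly as a quotient of $G$ and then to compare the three lower central quotients $F/F_k$, $G/G_k$ and $J/J_k$ through a chain of natural surjections, reducing the statement to Milnor's Theorem~\ref{theorem:milnor}. First I would observe that $M_L$ is obtained from the exterior $S^3 \setminus N(L)$ by Dehn filling each boundary torus along the zero-framed longitude, so van Kampen's theorem gives $J = \pi_1(M_L) \cong G/N$, where $N$ is the normal closure in $G$ of the zero-framed longitudes $l_1, \dots, l_m$. Because the lower central series is functorial under the surjection $q\colon G \twoheadrightarrow J$, we have $J_k = q(G_k) = G_k N / N$, and hence by the third isomorphism theorem $J/J_k \cong G/(G_k N)$.

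Next, since the meridians $\alpha_1, \dots, \alpha_m$ generate $G$, the map $F \to G$ sending each $\alpha_i$ to the corresponding meridian of $L$ is surjective, and it induces a chain of surjections
$$F/F_k \twoheadrightarrow G/G_k \twoheadrightarrow G/(G_k N) \cong J/J_k,$$
the second map being the obvious further quotient. Call the composite $\psi\colon F/F_k \to J/J_k$. The whole corollary now becomes a statement about when the maps in this chain are isomorphisms.

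For the forward implication, assume $F/F_{k+1} \cong G/G_{k+1}$. By Theorem~\ref{theorem:milnor} (applied with $k+1$ in place of $k$) this is equivalent to all longitudes lying in $G_k$, so $N \subseteq G_k$ and therefore $G_k N = G_k$; thus $J/J_k \cong G/G_k$. Since in particular $l_i \in G_k \subseteq G_{k-1}$, Theorem~\ref{theorem:milnor} also yields $F/F_k \cong G/G_k$, and combining the two gives $F/F_k \cong J/J_k$.

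The reverse implication is the crux, and the main point is to promote an abstract isomorphism into a statement about the canonical maps. Assume $F/F_k \cong J/J_k$. The group $F/F_k$ is finitely generated nilpotent, hence residually finite and therefore Hopfian; composing the surjection $\psi$ with a fixed isomorphism $J/J_k \xrightarrow{\cong} F/F_k$ produces a surjective endomorphism of $F/F_k$, which must be an isomorphism. Consequently $\psi$ itself is an isomorphism, and so each of the two surjections in the chain is an isomorphism (the first because a surjection that is injective, the second because it equals $\psi$ composed with the inverse of the first). In particular the second map $G/G_k \twoheadrightarrow G/(G_k N)$ is injective, which forces $G_k N = G_k$, i.e.\ $N \subseteq G_k$ and hence $l_i \in G_k$ for every $i$. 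A final application of Theorem~\ref{theorem:milnor} (again with $k+1$) converts $l_i \in G_k$ into $F/F_{k+1} \cong G/G_{k+1}$, completing the equivalence. I expect the Hopfian argument in this last paragraph to be the only genuinely delicate step, since everywhere else the maps are canonical and the conclusions follow from functoriality of the lower central series together with Milnor's theorem.
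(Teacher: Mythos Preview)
Your proof is correct and follows essentially the same route as the paper's outline: both identify $J$ as $G$ modulo the normal closure of the longitudes, and both reduce the equivalence to the statement ``longitudes lie in $G_k$'' via Milnor's Theorem~\ref{theorem:milnor}. The one substantive difference is in the reverse implication: the paper works with Milnor's explicit presentation $J/J_k \cong \langle x_i \mid \lambda_i, F_k\rangle$ and reads off $\lambda_i \in F_k$ directly, whereas you promote the abstract isomorphism to the canonical one by invoking the Hopfian property of $F/F_k$ --- this is more careful, since the paper's presentation argument tacitly uses the same Hopfian fact without naming it.
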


An outline of the proof is as follows. Let $L_i$ be the $i^{\text{th}}$ component of $L$. The group $G/G_{k+1}$ has presentation given by
$$G/G_{k+1} \cong \langle x_1, \dots , x_m | [x_i, \lambda_i], F_{k+1} \rangle$$
where $\lambda_i$ is the longitude of $L_i$ and $x_i$ is a meridian of $L_i$.  The inclusion of $S^3-L$ into $M_L$ induces an epimorphism on fundamental groups that has kernel normally generated by $\lambda_1, \dots, \lambda_m$.  The fundamental group $J=\pi_1(M_L)$ is obtained from $G$ by setting the longitudes $\lambda_i$ to zero.  This gives the presentation  $J/J_k \cong \langle x_1, \dots , x_m | \lambda_i, F_{k+1} \rangle$.

Suppose that the map induced from inclusion from $G/G_{k+1}$ to $F/F_{k+1}$ is an isomorphism. Then $[x_i, \lambda_i] \in F_{k+1}$, and thus $\lambda_i \in F_k$ since $x_i$ is a generator of $F$.  It is apparent that $J/J_k \cong F/F_k$.

Conversely, if $J/J_k \cong F/F_k$ then the relations show that $\lambda_i \in F_k$ and thus $[x_i, \lambda_i] \in F_{k+1}$.  This gives that $G/G_{k+1} \cong F/F_{k+1}$. It follows that $\bar \mu_L(I)=0$ for $|I| \leq k$ if and only if $F/F_k \cong J/J_k$.

\section{Main Theorem}

Before now, little has been known about the relationship between Milnor's invariants and ($n$)-solvability.  The following theorem demonstrates a relationship between Milnor's invariants and links, $L$, with certain restrictions on the 4-manifold bounded by $M_L$.  This theorem will be used to find an obstruction that detects an infinite subgroup of $\mathcal{F}^m_{n.5}/\mathcal{F}^m_{n+1}$ as well as to show that $\mathcal{F}^m_{-0.5}/\mathcal{F}^m_{1}$ is a nonabelian group.

\begin{theorem}Suppose $L$ is a link whose zero-framed surgery, $M_L$ bounds an orientable and compact 4-manifold $W$ that satisfies
\begin{itemize}
 \item[1.] $H_1(M_L) \rightarrow H_1(W; \mathbb{Z})$ is an isomorphism induced by the inclusion map;
  \item[2.]$H_2(W;\mathbb{Z})$ has a basis consisting of connected compact oriented surfaces $\{L_i\}$ with $\pi_1(L_i) \subset \pi_1(W)^{(n)}$.
   \end{itemize}
   Then $\bar \mu_L (I) = 0$ where $|I| \leq 2^{n+2}-1$.
\label{theorem:newmain}
\end{theorem}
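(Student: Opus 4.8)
The plan is to reduce the statement to an isomorphism of nilpotent quotients and then feed that into the machinery already assembled in the Preliminaries. By Corollary~\ref{lemma:iso}, in the form stated there ($\bar\mu_L(I)=0$ for $|I|\le k$ if and only if $F/F_k\cong J/J_k$, with $J=\pi_1(M_L)$), it suffices to prove that the meridian map $F\to J$ induces an isomorphism $F/F_k\cong J/J_k$ for $k=2^{n+2}-1$, where $F$ is the free group on $m$ meridians. I would factor this comparison through $P:=\pi_1(W)$: the meridians give $F\to J$, the inclusion $M_L\hookrightarrow W$ gives $J\to P$, and the triangle $F\to J\to P$ commutes. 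All three groups have $H_1\cong\mathbb{Z}^m$ generated by meridians (using hypothesis~1 for $H_1(W)$), so all the maps are isomorphisms on $H_1$. If I can show that both $J\to P$ and $F\to P$ induce isomorphisms on $\,\cdot/\cdot_k\,$, then the commuting triangle forces $F/F_k\cong J/J_k$, which is exactly what is needed.

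The engine for the two isomorphisms is Stallings' theorem in Dwyer's refined form: a homomorphism that is an isomorphism on $H_1$ induces an isomorphism on $\,\cdot/\cdot_k\,$ provided the second homology of the target is carried by Dwyer's filtration $\Phi_k$. Since $H_2(F)=0$, for $F\to P$ I must check that all of $H_2(P)$ lies in $\Phi_k(P)$; and because the classifying map $W\to K(P,1)$ is surjective on $H_2$, the group $H_2(P)$ is generated by the images of the surfaces $L_i$ from hypothesis~2. Thus everything collapses to a single homological input: the classes $[L_i]$ must die in $H_2\big(P/P_{k-1}\big)$ for $k$ as large as $2^{n+2}-1$. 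The same computation simultaneously handles $J\to P$, since once $H_2(P)=\Phi_k(P)$ the Dwyer condition is automatic there as well.

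The main obstacle is precisely this filtration estimate, and it is where the constant $2^{n+2}-1$ is born. The available input is that each $L_i$ has $\pi_1(L_i)\subset P^{(n)}$, together with the comparison $P^{(n)}\subset P_{2^n}$ recalled in the Preliminaries. I would exploit the surface's symplectic structure: the fundamental class of $L_i$ is carried by the pairings $[a_j,b_j]$ of a symplectic basis, and commutators of elements of a derived subgroup sink one further step down the derived series, hence quadratically further down the lower central series. Tracking how deep $[L_i]$ therefore sits, and checking that the genuinely first-order obstruction surviving at each stage is annihilated by hypothesis~1 — exactly as the linking-number obstruction is annihilated in the base case $n=0$, where the $H^1$-classes of $M_L$ extend over $W$ — is the delicate bookkeeping that produces the sharp range $|I|\le 2^{n+2}-1$ rather than a weaker power of two. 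Once the estimate is in place, Dwyer's theorem yields the two nilpotent-quotient isomorphisms, the triangle gives $F/F_k\cong J/J_k$, and Corollary~\ref{lemma:iso} completes the proof.
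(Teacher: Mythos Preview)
Your reduction to nilpotent quotients and the appeal to Dwyer's theorem matches the paper's first phase exactly: with $P=\pi_1(W)$, the surfaces $L_i$ lift to the cover corresponding to $P^{(n)}\subset P_{2^n}$, the Cochran--Harvey lemma (that $H_2(P_k)\to H_2(P)\to H_2(P/P_{2k-1})$ is zero) yields $\Phi_{2^{n+1}-1}(P)=H_2(P)$, and Dwyer then gives $F/F_{2^{n+1}}\cong P/P_{2^{n+1}}$, hence $F/F_{2^{n+1}}\cong J/J_{2^{n+1}}$. So you correctly obtain $\bar\mu_L(I)=0$ for $|I|\le 2^{n+1}$.

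The gap is the second half. You propose to reach $k=2^{n+2}-1$ by a sharper Dwyer-filtration estimate, arguing that since the symplectic basis lies in $P^{(n)}$ the commutators $[a_j,b_j]$ lie in $P^{(n+1)}\subset P_{2^{n+1}}$. But the Dwyer level of a surface class is governed by the lower-central depth of the \emph{basis curves}, not of their commutators: a torus with $a\in G_p$, $b\in G_q$ contributes to $\Phi_{p+q-1}$, and the best the hypothesis gives is $a_j,b_j\in P_{2^n}$, hence $[L_i]\in\Phi_{2^{n+1}-1}(P)$ and no further. Already at $n=0$ your scheme fails: hypothesis~2 is vacuous there, so Dwyer yields only $F/F_2\cong P/P_2$ (vanishing linking numbers), whereas the theorem demands length~$3$. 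Your appeal to hypothesis~1 and ``$H^1$-classes extending'' does not supply the missing step; there is no reason the surface classes die in $H_2(P/P_2)\cong\Lambda^2 H_1(W)$ without the derived-series condition.

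The paper closes this gap by an entirely different mechanism that you omit: once $F/F_k\cong J/J_k$ with $k=2^{n+1}$, one uses that $M_L=\partial W$ to see that the image of $[M_L]$ in $H_3(F/F_k)$ vanishes, and then invokes the Cochran--Gerges--Orr extension lemma (resting on deep work of Igusa--Orr on $H_3$ of free nilpotent quotients) to lift the isomorphism step by step up to $F/F_{2k-1}\cong J/J_{2k-1}$. This $H_3$ argument, not any refinement of the $H_2$/Dwyer estimate, is what produces the sharp bound $2^{n+2}-1$.
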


\begin{proof}
As mentioned above in Theorem~\ref{theorem:milnor}, $\bar \mu_L(I)$=0 for all $|I| \leq k$ for any link $L$ in $S^3$ if and only if $F/F_{k+1} \cong G/G_{k+1}$, where $F=F\langle x_1, \cdots, x_m\rangle$ and $G=\pi_1(S^3-L)$. Using Corollary~\ref{lemma:iso}, this is equivalent to $F/F_{k}$ being isomorphic to $J/J_{k}$ where $J=\pi_1(M_L)$.

  Suppose $L$ is an $m$ component link whose zero-framed surgery, $M_L$ bounds a 4-manifold $W$ with $H_2(W;\mathbb{Z})$ having a basis consisting of connected compact oriented surfaces $\{L_i\}$ with $\pi_1(L_i) \subset \pi_1(W)^{(n)}$.   Consider the following sequence of maps on $\pi_1$ induced by inclusion (we are viewing $F$ as the fundamental group of a wedge of $m$ circles)
$$\begin{CD}
F @>\phi_1>>G@>\phi_2>> J @>\phi_3>> E=\pi_1(W).\\
\end{CD}$$
The map $\phi_2$ is the surjection induced by the inclusion of $S^3-L$ into $M_L$ and has kernel normally generated by the longitudes.
The quotients of all of these groups by the $k^{th}$ terms of their lower central series gives another sequence of maps
$$\begin{CD}
F/F_k @>\overline{\phi_1}>>G/G_k@>\overline{\phi_2}>> J/J_k @>\overline{\phi_3}>> E/E_k.\\
\end{CD}$$
Since $\phi_2$ is surjective, the map $\overline{\phi_2}:G/G_k \rightarrow J/J_k$ is a surjection for all values of $k$.


Dwyer's Theorem~\cite{Dw} is of particular importance and is stated here for convenience.

\begin{theorem}[Dwyer's Integral Theorem]
Let $\phi:A\rightarrow B$ be a homomorphism that induces an isomorphism on $H_1(-;\mathbb{Z})$. Then for any positive integer $k$, the following are equivalent:
\begin{itemize}
\item[i.] $\phi$ induces an isomorphism $A/A_{k+1} \cong B/B_{k+1}$;
\item[ii.] $\phi$ induces an epimorphism $H_2(A;\mathbb{Z})/\Phi_k(A) \rightarrow H_2(B;\mathbb{Z})/\Phi_k(B)$.
\end{itemize}
where $\Phi_k(A)=\ker (H_2(A)\rightarrow H_2(A/A_k))$ for $k \geq 1$.\
\label{theorem:dwyer}
\end{theorem}

Consider the map induced by $\phi_3 \circ \phi_2 \circ \phi_1$
 \begin{equation}H_2(F;\mathbb{Z})/\Phi_k(F) \rightarrow H_2(E;\mathbb{Z})/\Phi_k(E)\label{map:dwyer}\end{equation}
where $E=\pi_1(W)$.  Showing (2.2) is a surjection is equivalent to showing $\phi:=\overline{\phi_3} \circ \overline{\phi_2} \circ \overline{\phi_1}:F/F_{k+1} \rightarrow E/E_{k+1}$ is an isomorphism.

Since $F$ is the free group on $m$ generators, $H_2(F;\mathbb{Z})=0$. The map of (2.2) is a surjection precisely when $\Phi_k(E)= H_2(E,\mathbb{Z}).$  We need to determine for which $k$ we have $\Phi_k(E)= H_2(E,\mathbb{Z}).$

Consider the following diagram

$$\begin{CD}
H_2(W_k)  @>p_*>> H_2(W)\\
@VVV  @VVV\\
H_2(E_k)  @>i_*>>H_2(E) @>\pi_*>>H_2(E/E_{2k-1})
\end{CD}$$\\
where $W_k$ is the covering space of $W$ that corresponds with the $k^{th}$ term of the lower central series of $\pi_1(W)$.
The vertical maps are surjections obtained from the exact sequence induced by the Hurewicz map (2.3).  The maps $p_*$, $ i_*$ and $\pi_*$ are the maps induced by the covering map $p$, inclusion and projection respectively.
\begin{equation}\pi_2(X) \rightarrow H_2(X) \rightarrow H_2(\pi_1(X)) \rightarrow 1.\label{seq:hurewicz}\end{equation}

By assumption, there is a basis of $H_2(W)$ consisting of surfaces, $\{L_i\}$. The group $H_2(E)$ is generated by the images of the $L_i$ since $H_2(W)\rightarrow H_2(E)$ is a surjection.

We claim that the map $i_*$ is a surjection.  This can be seen by viewing $H_2(E_k)$ as the second homology group for the covering space, $K(E_k,1)$ of the Eilenberg-Maclane space $K(E,1)$. Note that $K(E_k,1)$ is the covering space of $K(E,1)$ corresponding to the subgroup $E_k$ of $E$. When $k=2^n$ the images of $\{L_i, D_i\}$ in $H_2(E)$ will lift to $H_2(E_k)$ so $i_*:H_2(E_k) \rightarrow H_2(E)$ is surjective.

Cochran and Harvey~\cite[Lemma 5.4]{CH4} showed that the composition of the following maps
$$\begin{CD}
H_2(E_k)  @>i_*>>H_2(E) @>\pi_*>>H_2(E/E_{2k-1})
\end{CD}$$
is the zero map for all $k$.  Since $i_*$ is surjective, this implies that  $\pi_*$ is the zero map.  Hence $\Phi_{2k-1}(E)=H_2(E)$ and Dwyer's theorem gives an isomorphism $H_2(F)/\Phi_k(F) \rightarrow H_2(E)/\Phi_k(E)$ induced by $\phi_3 \circ \phi_2 \circ \phi_1$ for $k=2^n$.  In turn, this gives isomorphism $F/F_{2k}\cong E/E_{2k}$ when $k=2^n$.
 Thus $\hat \phi := \overline{\phi_2}\circ \overline{\phi_1}: F/F_{2^{n+1}} \rightarrow J/J_{2^{n+1}}$ is a monomorphism.  Since $\overline{\phi_1}$ is a map $F/F_k \rightarrow F/\langle \text{relations}, F_k \rangle$ and $\phi_2$ is a surjection, by Milnor's presentation (2.1), $\hat \phi$ is a surjection and thus an isomorphism.

By Theorem~\ref{theorem:milnor} and Corollary~\ref{lemma:iso}, the $\bar \mu$-invariants of length less than or equal to $2^{n+1}$ vanish for ($n$)-solvable links.

This result can be improved slightly.  Let $g=(\hat \phi)^{-1}$ be a specified isomorphism.  Let $f$ be the composite of the following maps
\[
\begin{diagram}
\node{J} \arrow{e,t}{\pi_J}
\node{J/J_{2^{n+1}}} \arrow{e,t}{g}
\node{F/F_{2^{n+1}}}
\end{diagram}
\]
where $\pi_J$ is the canonical quotient map.  Consider the following commutative diagram
\[
\begin{diagram}
\node{E/E_{2^{n+1}}}
\node{J/J_{2^{n+1}}} \arrow{w,tb}{\phi}{\cong}
\node{F/F_{2^{n+1}}} \arrow{w,tb}{g^{-1}}{\cong}\\
\node{E} \arrow{n,r}{\pi_E}
\node{J} \arrow{n,l}{\pi_J} \arrow{ne,b}{g \circ \pi_J=f} \arrow{w}
\end{diagram}
\]
where $\phi$ is the isomorphism between $J/J_{2^{n+1}}$ and $E/E_{2^{n+1}}$ established earlier in the proof and $\pi_E$ is the canonical quotient map.  Thus we have an extension of $f$ to $E$, namely $\bar f=g \circ \phi^{-1} \circ \pi_E:E \rightarrow F/F_{2^{n+1}}$.
This gives the following commutative diagram.
\[
\begin{diagram}
\node{\pi_1(M_L)} \arrow{s,l}{i_*} \arrow{e,t}{f} \node{F/F_{2^{n+1}}}\\
\node{\pi_1(W)} \arrow{ne,b}{\bar f}
\end{diagram}
\]
The commutative diagram below on homology is achieved by the induced maps obtained from the above maps.
\[
\begin{diagram}
\node{H_3(M_L)} \arrow{s,l}{i_*} \arrow{e,t}{f} \node{H_3(F/F_{2^{n+1}})}\\
\node{H_3(W)} \arrow{ne,b}{\bar f}
\end{diagram}
\]
Since $M_L=\partial W$, $i_*:H_3(M_L)\rightarrow H_3(W)$ is the zero map.  Also, $\bar f :H_3(M_L) \rightarrow H_3(F/F_{2^{n+1}})$ is the zero map since the diagram commutes. Consider the following sequence of maps
$$H_3(F/F_{2k-1}) \overset{h_{k-1}}{\longrightarrow} H_3(F/F_{2k-2}) \overset{h_{k-2}}{\longrightarrow} \cdots \overset{h_2}{\longrightarrow} H_3(F/F_{k+1}) \overset{h_1}{\longrightarrow} H_3(F/F_k)$$
where $h_i:H_3(F/F_{k+i}) \rightarrow H_3(F/F_{k+i-1})$ and $k=2^{n+1}$.  The image of the fundamental class under the map $H_3(M_L)\rightarrow H_3(F/F_m)$ will be denoted by $\theta_{m}(M_L,f)$.

 We will use the following two results of Cochran, Gerges and Orr~\cite{CGO}.  These results rely heavily on deep work of Igusa and Orr~\cite{IO}.

\begin{lemma}[Cochran-Gerges-Orr]
$\theta_{m}(M_L, f) \in \text{Image}(\pi_*:H_3(F/F_{m+1}) \rightarrow H_3(F/F_m))$ if and only if there is some isomorphism $\tilde f:J/J_{m+1} \rightarrow F/F_{m+1}$ extending $f$ such that $\pi_*(\theta_{m+1}(M_L, \tilde f))=\theta_m(M_L,f)$.
\label{lemma:extension}
\end{lemma}

\begin{corollary}[Cochran-Gerges-Orr]
The map $H_3(F/F_{2m-1}) \rightarrow H_3(F/F_m)$ is the zero map.  Any element in the kernel of $H_3(F/F_{m+j}) \rightarrow H_3(F/F_m)$, $j\leq m-1$, lies in the image of $H_3(F/F_{2m-1}) \rightarrow H_3(F/F_{m+j}).$
\label{cor:kerim}
\end{corollary}

Since $\theta_{2^{n+1}}(M_L,f)=$[$0$] and [0] is always in the image of a homomorphism, there is an extension of $f$ to an isomorphism $\tilde f:J/J_{k+1} \rightarrow F/F_{k+1}$ with $h_1(\theta_{k+1}(M_L,\tilde f))=\theta_k(M_L,f)=0$ by Lemma~\ref{lemma:extension} and $\theta_{k+1}(M_L,\tilde f)$ is in the kernel of $h_1$.  Then $\theta_{k+1}(M_L,\tilde f)$ lies in the image of $H_3(F/F_{2k-1})\rightarrow H_3(F/F_{k+1})$ by Corollary~\ref{cor:kerim}.  In other words, it lies in the image of the map $h_2 \circ h_3 \circ \cdots \circ h_{k-1}$ and in turn lies in the image of $h_2$.  By Lemma~\ref{lemma:extension}, there is an extension of $\tilde f$ that is an isomorphism between $J/J_{k+2}$ and $F/F_{k+2}$.  By continuing this process, an isomorphism between $J/J_{2k-1}$ and $F/F_{2k-1}$ with $k=2^{n+1}$ is obtained. Thus we have that the $\bar \mu$-invariants of lengths less than or equal to $2^{n+2}-1$ of our link vanish.  This concludes the proof of Theorem~\ref{theorem:newmain}.
\end{proof}

We end this section with several remarks involving the limitations and generalizations of Theorem~\ref{theorem:newmain}.  We first notice that the proof of Theorem~\ref{theorem:newmain} did not rely on the intersection form seen in the definition of ($n$)-solvability.  As a consequence, we obtain the following corollary.

\begin{corollary}
If $L$ is an $(n)$-solvable link (or string link), then $\bar \mu_L(I)=0$ for $|I| \leq 2^{n+2}-1$.
\label{theorem:main}
\end{corollary}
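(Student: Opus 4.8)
The plan is to deduce this directly from Theorem~\ref{theorem:newmain} by checking that every $(n)$-solution is a 4-manifold of exactly the type considered there. The key observation, already flagged in the preceding remark, is that the proof of Theorem~\ref{theorem:newmain} never invokes the intersection form or the dual surfaces $D_i$: it uses only that $M_L$ bounds an orientable compact $W$ for which $H_1(M_L)\to H_1(W)$ is an isomorphism and for which $H_2(W;\mathbb{Z})$ admits \emph{some} basis of connected compact oriented surfaces lying in $\pi_1(W)^{(n)}$.

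First I would take $L$ to be an $(n)$-solvable link and let $W$ be an $(n)$-solution for $L$, so that $M_L=\partial W$. Condition (i) in the definition of $(n)$-solvability is precisely hypothesis~1 of Theorem~\ref{theorem:newmain}. For hypothesis~2, conditions (ii) and (iii) supply a basis $\{L_i,D_i\}$ of $H_2(W;\mathbb{Z})$ consisting of connected compact oriented surfaces with $\pi_1(L_i)\subset\pi_1(W)^{(n)}$ and $\pi_1(D_i)\subset\pi_1(W)^{(n)}$. Thus the \emph{entire} basis satisfies the derived-series condition; relabelling this collection as the family $\{L_i\}$ appearing in Theorem~\ref{theorem:newmain}, hypothesis~2 holds. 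Applying Theorem~\ref{theorem:newmain} then yields $\bar\mu_L(I)=0$ for $|I|\leq 2^{n+2}-1$.

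Finally I would dispatch the string link case. By definition a string link $\sigma$ is $(n)$-solvable precisely when its closure $\hat\sigma$ is an $(n)$-solvable link, and the $\bar\mu$-invariants of $\sigma$ agree with those of $\hat\sigma$; hence the conclusion for $\sigma$ follows immediately from the link case.

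There is essentially no obstacle here beyond this hypothesis check: the whole content is the observation that $(n)$-solvability supplies a strictly stronger configuration---a half-basis of Lagrangian surfaces $\{L_i\}$ together with dual surfaces $\{D_i\}$, all in $\pi_1(W)^{(n)}$---than Theorem~\ref{theorem:newmain} requires, namely merely a full basis of surfaces in $\pi_1(W)^{(n)}$. The only point warranting a moment's care is confirming that the dual surfaces $D_i$ may be absorbed into the basis without disturbing the derived-series condition, which is immediate from (iii), and that the $(n)$-solution may be taken orientable, as is standard in this setting.
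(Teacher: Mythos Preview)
Your proposal is correct and follows exactly the paper's approach: the paper does not give a separate proof of the corollary but simply remarks that the proof of Theorem~\ref{theorem:newmain} never used the intersection form, so that any $(n)$-solution automatically satisfies hypotheses~1 and~2, which is precisely the hypothesis check you carry out. Your treatment of the string link case and the minor point about orientability are appropriate elaborations of details the paper leaves implicit.
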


The converse of Corollary~\ref{theorem:main} is false.  Consider the Whitehead link $= W$ in Figure~\ref{fig:whitehead}.  The first nonvanishing $\bar \mu$-invariant occurs at length four.  One of these invariants is $\bar \mu_W(1122)=\pm1$, depending on orientation.  The figure eight knot, $4_1$, may be obtained as the result of band summing the two components of $W$.  It is known that this knot is not $(0)$-solvable since its Arf invariant is nonzero~\cite{COT}. It is known that if a link is ($n$)-solvable, then the result of bandsumming any two components is ($n$)-solvable (see ~\cite{OT} for a complete proof).  This implies that the Whitehead link is not $(0)$-solvable.

\begin{figure}[h!]
  \centering
  \includegraphics[height=2.5cm]{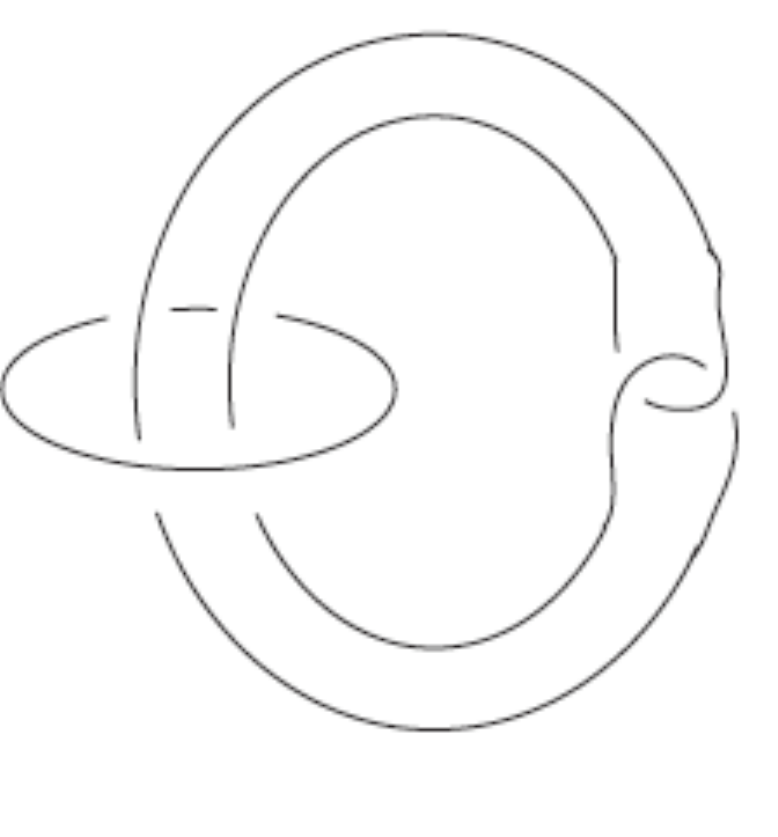}\qquad \includegraphics[height=2.5cm]{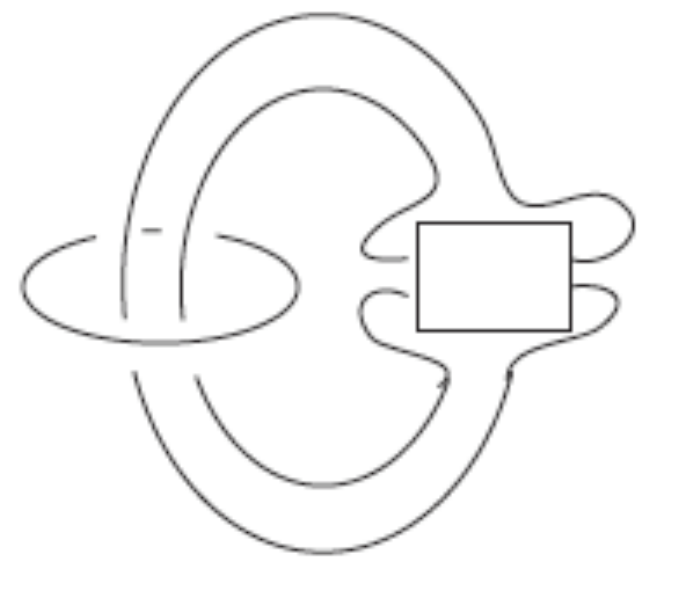}\put(-25,35){$n$}
  \caption[The Whitehead link and the $n$-twisted Whitehead link]%
  {The Whitehead Link and the $n$-twisted Whitehead link}
  \label{fig:whitehead}
\end{figure}

We also remark that the result of Corollary~\ref{theorem:main} is sharp in the sense the length of vanishing $\bar \mu$-invariants cannot be extended.  Consider the $n$-twisted Whitehead link in Figure~\ref{fig:whitehead}.  The number $n$ represents the number of full twists.  When $n$ is even, this link is band pass equivalent to the trivial link.  We will see in section 3 that this implies the link is $(0)$-solvable.  However, $\bar \mu(1122)=-n$ and $\bar \mu(1212)=2n$ are the first nonvanishing $\bar \mu$-invariants. There are examples of ($n$)-solvable links $L$ with $\bar \mu_L(I)=\pm 1$ for some $|I|=2^{n+1}$ (see section 5 for examples using Bing doubling).

Recently, Cochran, Harvey and Horn definition several new filtrations of $\mathcal{C}^m$~\cite{CHH}, namely the $n$-positive, $n$-negative, and $n$-bipolar filtrations.  The proof of Theorem~\ref{theorem:newmain} also can be applied to all these filtations as well.

\begin{corollary}
If $L$ is an $n$-positive, $n$-negative, or $n$-bipolar link, then $\bar \mu_L(I)=0$ for $|I| \leq 2^{n+2}-1$.
\label{cor:newresult}
\end{corollary}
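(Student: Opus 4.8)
The plan is to observe that the definition of each of these three filtrations already supplies a 4-manifold bounded by $M_L$ that satisfies the two hypotheses of Theorem~\ref{theorem:newmain}, so the corollary follows with no new work beyond unwinding definitions. Recall from~\cite{CHH} that a link $L$ is \emph{$n$-positive} if $M_L$ bounds a compact, oriented 4-manifold $W$ for which (a) $H_1(M_L;\Z) \to H_1(W;\Z)$ is an isomorphism, (b) the intersection form on $H_2(W;\Z)$ is positive definite, and (c) $H_2(W;\Z)$ has a basis represented by connected, compact, oriented embedded surfaces $\{L_i\}$ with trivial normal bundles and $\pi_1(L_i) \subset \pi_1(W)^{(n)}$. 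The \emph{$n$-negative} case is identical except that the intersection form in (b) is required to be negative definite, and $L$ is \emph{$n$-bipolar} when it is simultaneously $n$-positive and $n$-negative.

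First I would note that conditions (a) and (c) above are verbatim the hypotheses 1 and 2 of Theorem~\ref{theorem:newmain}. Thus, given an $n$-positive link, I take its positive-definite 4-manifold $W$; given an $n$-negative link I take its negative-definite $W$; and given an $n$-bipolar link, either of the two 4-manifolds will serve. In every case $W$ is a compact orientable 4-manifold bounded by $M_L$ that satisfies hypotheses 1 and 2, so Theorem~\ref{theorem:newmain} applies directly and yields $\bar\mu_L(I) = 0$ for $|I| \le 2^{n+2}-1$.

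The only point requiring comment — and the reason this is genuinely a corollary rather than a separate argument — is that the proof of Theorem~\ref{theorem:newmain} never invokes the definiteness of the intersection form; indeed, as observed in the remark preceding Corollary~\ref{theorem:main}, that proof makes no use of the intersection form at all, relying solely on the derived-series depth of the generating surfaces (through Dwyer's Theorem and the Cochran--Gerges--Orr machinery). Consequently the extra condition (b), which is precisely what distinguishes the positive, negative, and bipolar filtrations from the ordinary $(n)$-solvable filtration, is irrelevant to the conclusion. There is no real obstacle here: the entire content is the observation that Theorem~\ref{theorem:newmain} was stated under hypotheses weak enough to be shared by all of these filtrations. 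The one item to double-check when writing out the details is that the $H_1$-condition and derived-series condition in~\cite{CHH} match hypotheses 1 and 2 exactly (including the connectedness and trivial-normal-bundle requirements on the surfaces), which they do.
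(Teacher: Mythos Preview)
Your proposal is correct and matches the paper's approach exactly: the paper gives no separate proof of this corollary, merely remarking that ``the proof of Theorem~\ref{theorem:newmain} also can be applied to all these filtrations as well,'' which is precisely the observation you spell out in detail. Your explanation that the intersection-form condition distinguishing these filtrations from $(n)$-solvability plays no role in the proof of Theorem~\ref{theorem:newmain} is the entire content.
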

\section{Bing Doubling and Solvability}

The goal of this section is to understand the effect that Bing doubling has on solvability. Bing doubling is a doubling operator performed on knots and links. If $K$ is a knot, then the two-component link in Figure~\ref{fig:BD(K)} is the Bing double of $K$, denoted $BD(K)$.  If $L$ is an $m$-component link, $BD(L)$ denotes be the $2m$-component link obtained by Bing doubling every component of $L$.

\begin{figure}[h!]
  \centering
  \includegraphics[width=2.5cm]{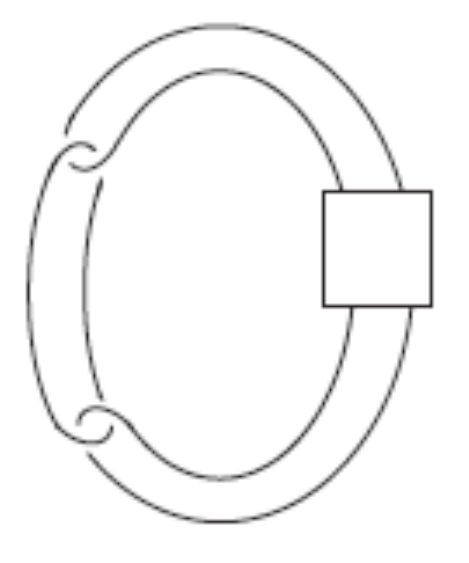}\put(-18.5,43){$K$}
  \caption[Bing doubling a knot]%
  {The Bing double of a knot $K$, BD(K)}
  \label{fig:BD(K)}
\end{figure}

Bing doubling can also be viewed as multi-infection by a string link. Let $L=L_1 \cup L_2 \cup \cdots \cup L_m$ in $S^3$ be an $m$ component link in $S^3$.  Let $L_{BD}$ be the $2m$-component link pictured in Figure~\ref{fig:trivialLBD} that is isotopic to the $2m$-component trivial link.
Then there is a handlebody, $H$, in $S^3-L_{BD}$ which is the exterior of a trivial string link with $m$ components (see Figure~\ref{fig:trivialLBD} for an example).  The $\eta_i$ are curves in $S^3-L_{BD}$ and are the canonical meridians of the trivial string link.

\begin{figure}[h!]
  \centering
  \subfigure[The trivial link $L_{BD}$]{\includegraphics[height=2.5cm]{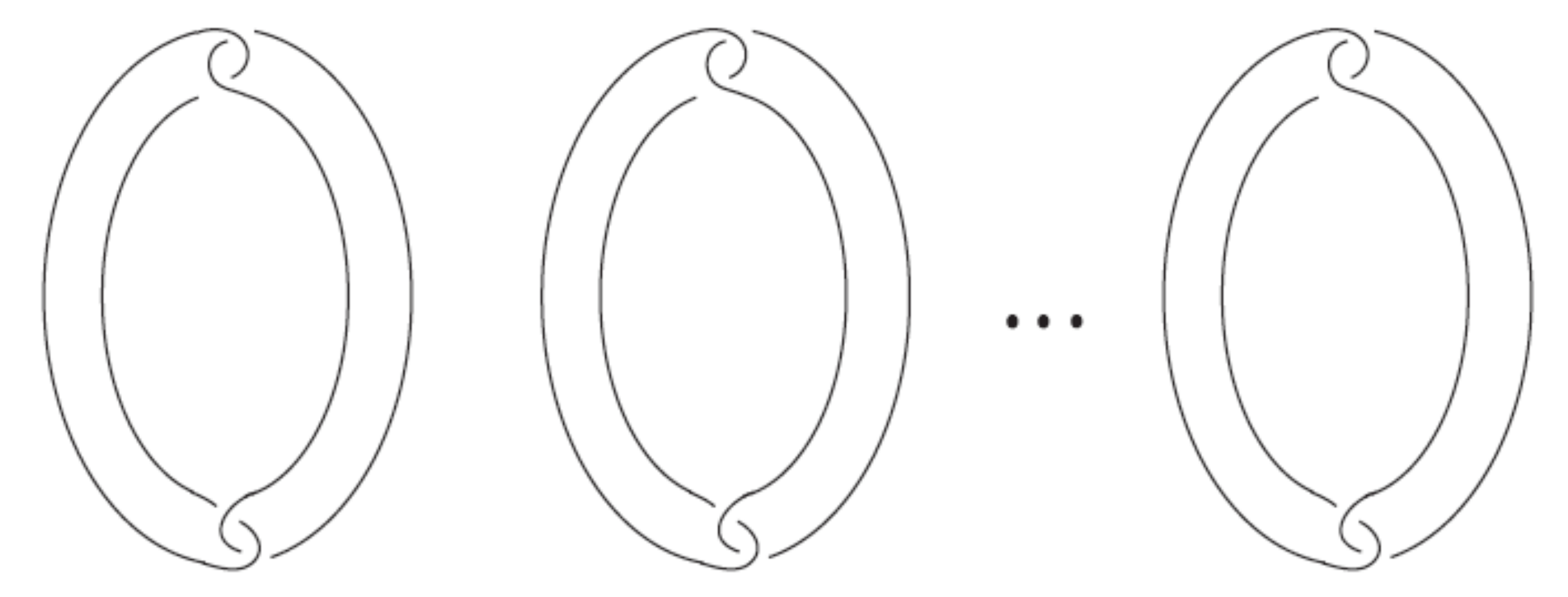}}\qquad \qquad
  \subfigure[A handlebody in $S^3-L_{BD}$]{\includegraphics[height=2.5cm]{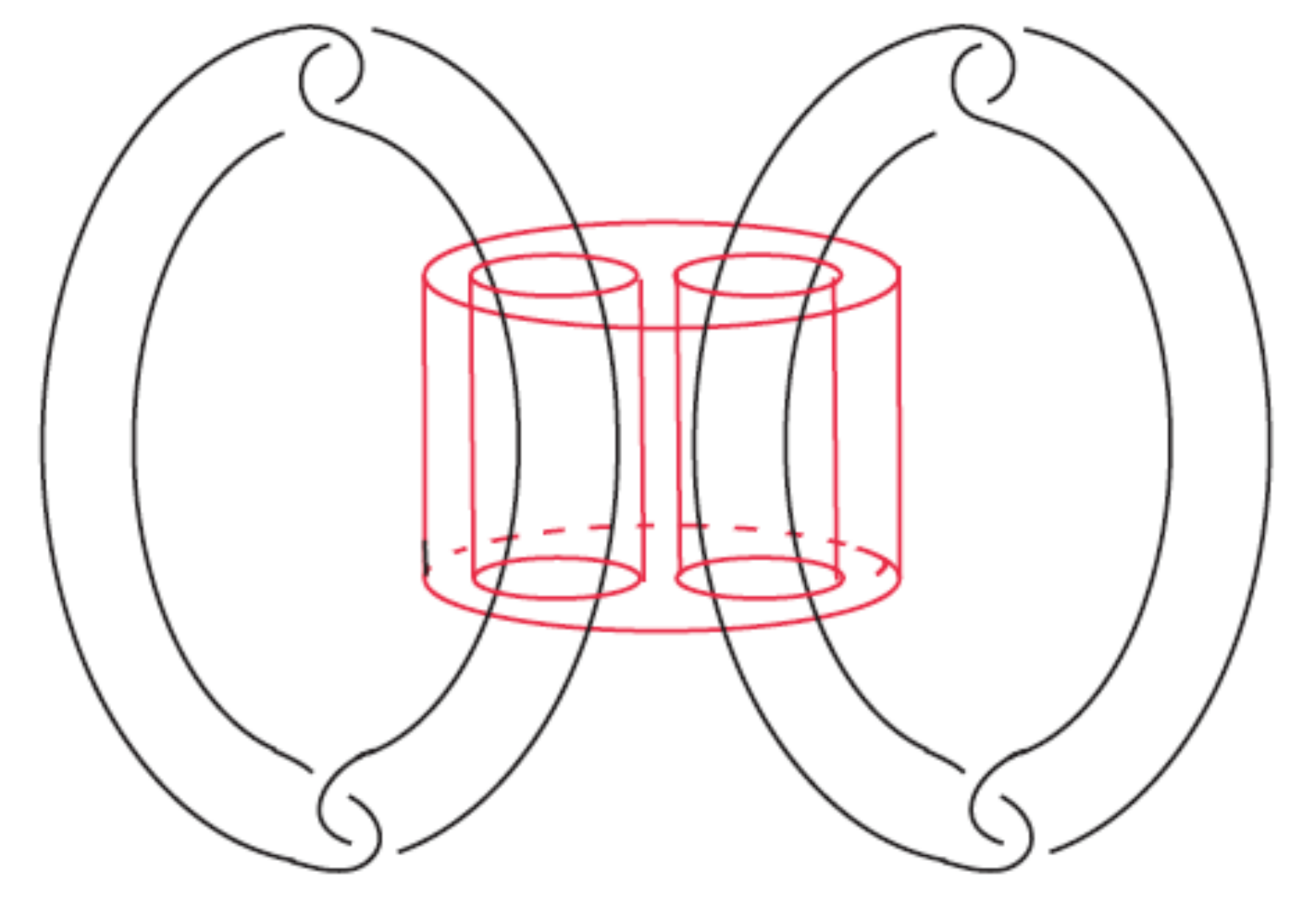}}
  \caption[The trivial link $L_{BD}$ and a handlebody in $S^3-L_{BD}$]
  { }
  \label{fig:trivialLBD}
\end{figure}

Take a string link $J$, such that $\hat J$ is isotopic to $L$.  There are an infinite number of string links that meet this criterion.  Then
$$BD(L)=((S^3-L_{BD})-H) \cup_\phi (D^2 \times I - J)$$
where $\phi$ maps $l_i \mapsto \gamma_i$ and $\mu_i \mapsto \eta_i^{-1}$ and the $l_i$, $\gamma_i$, $\mu_i$ and $\eta_i$ are depicted in Figure~\ref{fig:identifysl}.

\begin{figure}[h!]
\centering
\subfigure[Exterior of the trivial string link, the handlebody $H$]{\includegraphics[height=4cm]{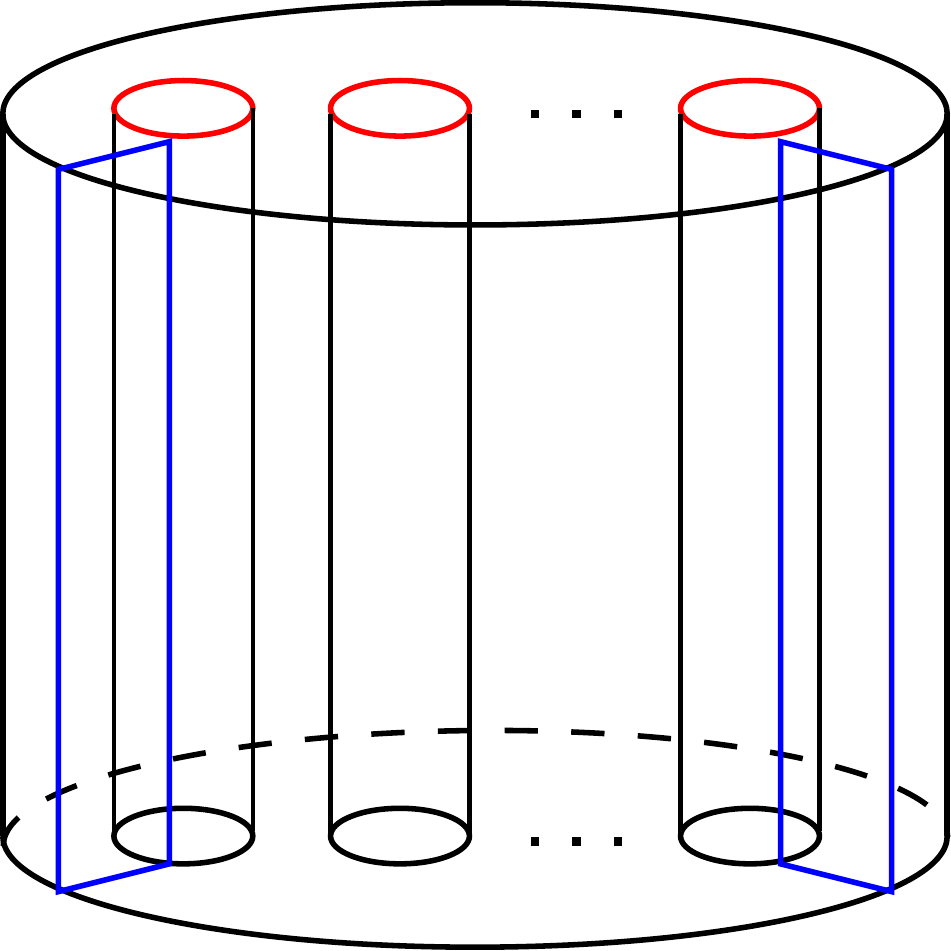}
\put(-98,116){\textcolor{red}{$\eta_1$}}
\put(-73,117){\textcolor{red}{$\eta_2$}}
\put(-30,116){\textcolor{red}{$\eta_n$}}
\put(-125,60){\textcolor{blue}{$l_1$}}
\put(5,60){\textcolor{blue}{$l_n$}}}\qquad \qquad \qquad
\subfigure[Exterior of the string link $J$]{\includegraphics[height=4cm]{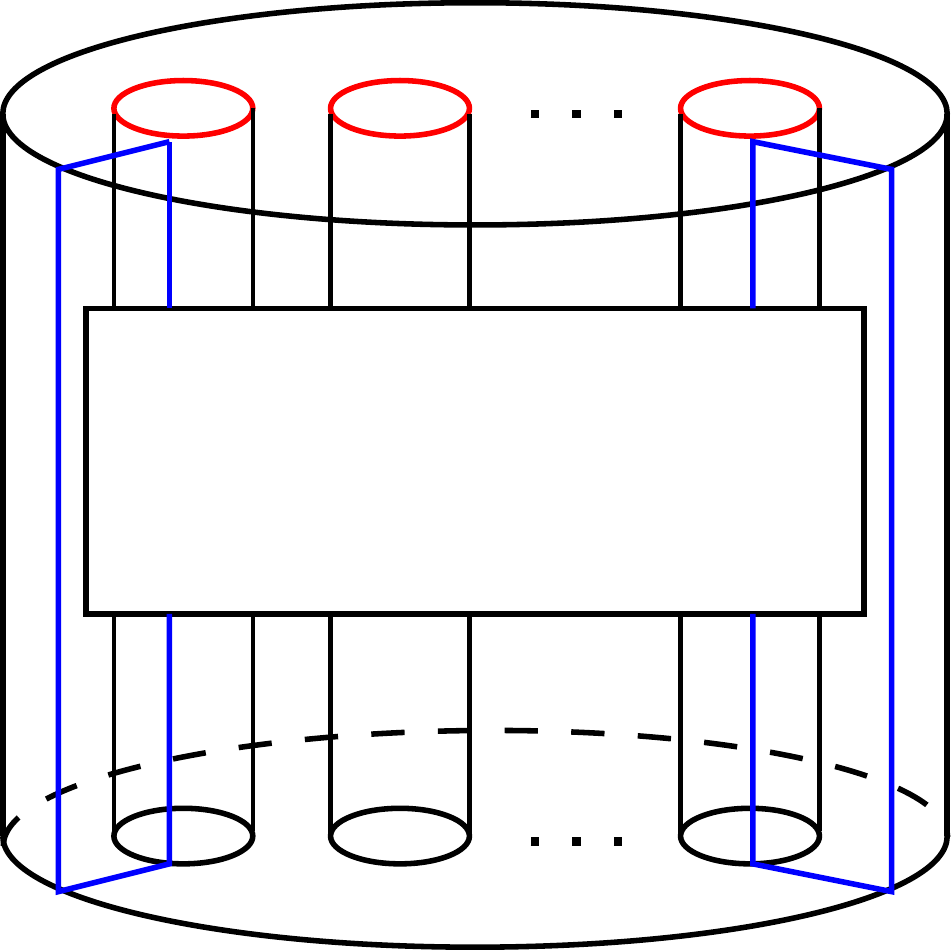}
\put(-98,116){\textcolor{red}{$\mu_1$}}
\put(-73,117){\textcolor{red}{$\mu_2$}}
\put(-30,116){\textcolor{red}{$\mu_n$}}
\put(-125,60){\textcolor{blue}{$\gamma_1$}}
\put(5,60){\textcolor{blue}{$\gamma_n$}}
\put(-60,55){$J$}}
\caption[Longitudes and meridians of string links]
{The longitudes $l_i, \gamma_i$ and meridians $\mu_i, \eta_i$}
\label{fig:identifysl}
\end{figure}

We will consider geometric moves that can be performed on knots and links and determine their effects on solvability.
The first move we consider is the band pass move, illustrated in Figure~\ref{fig:bandpass}.
 \begin{figure}[h!]
  \centering
  {
  \includegraphics[height=3cm]{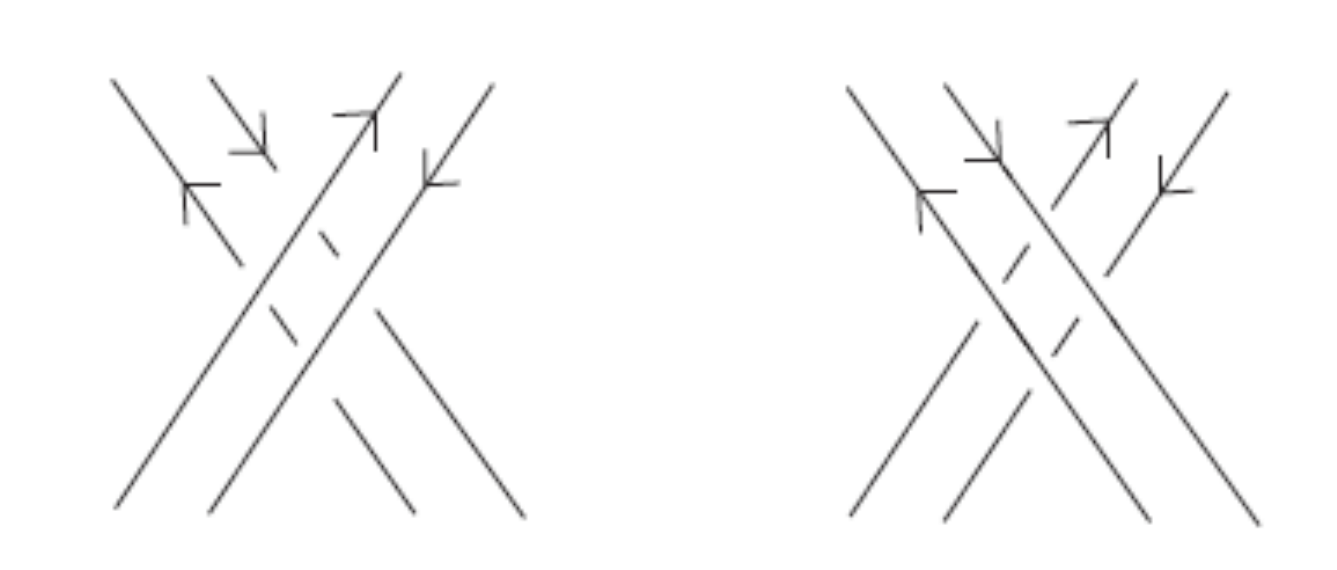}
  \put(-107,40){$\longleftrightarrow$}}
  \caption[Band pass move]%
  {Band pass move}
  \label{fig:bandpass}
\end{figure}
\begin{remark}
Lemmas~\ref{lemma:T11} and~\ref{lemma:T12} are results of Taylor Martin and complete proofs can be found in~\cite{TM}.
\end{remark}

\begin{lemma}[Martin]
A band pass move preserves $(0)$-solvability.
\label{lemma:T11}
\end{lemma}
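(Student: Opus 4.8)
The strategy is to realize the band pass move by a four-dimensional cobordism and to graft it onto an existing $(0)$-solution. Fix links $L, L'$ differing by a single band pass move inside a ball $B$, and let $W$ be a $(0)$-solution for $L$. The first observation is that at level $n=0$ condition (iii) in the definition of solvability is vacuous, since $\pi_1(W)^{(0)}=\pi_1(W)$; thus ``$(0)$-solution'' means precisely a compact $W$ with $\partial W = M_L$, with $H_1(M_L)\xrightarrow{\cong}H_1(W)$, and with $H_2(W)$ carried by embedded surfaces $\{L_i,D_i\}$ of trivial normal bundle meeting in the standard hyperbolic pattern. I would also record at the outset that a band pass move preserves all pairwise linking numbers (the strands traverse the pass region with algebraically cancelling intersections), so that $H_1(M_{L'})\cong\mathbb{Z}^m$ as well --- this is forced in any case, since condition (i) already requires every $(0)$-solvable link to have vanishing linking numbers.

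Next I would produce the cobordism. Writing the band pass as a movie supported in $B\times I$ --- a cut of one band, a pass of the other, and a reglue, realized by a pair of oppositely signed saddle moves --- and extending the zero-framed surgery construction over this movie yields a compact four-manifold $V$ with $\partial V = M_L \sqcup (-M_{L'})$. The claim I would establish is that $V$ is a $\mathbb{Z}$-homology cobordism, i.e. both inclusions $M_L\hookrightarrow V \hookleftarrow M_{L'}$ induce isomorphisms on integral homology. The doubled nature of the bands is exactly what makes this possible: the two saddles cancel homologically, so the movie contributes no net homology, whereas a bare crossing change would instead force a $\pm1$-framed handle and hence an odd class in the intersection form.

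I would then set $W' = W\cup_{M_L}V$, so that $\partial W' = M_{L'}$, and verify the solvability conditions by a Mayer--Vietoris computation. Because $V$ is a homology cobordism, the composite $M_{L'}\hookrightarrow V\hookrightarrow W'$ is an isomorphism onto $H_1(W')\cong\mathbb{Z}^m$, giving (i); and the inclusion $W\hookrightarrow W'$ is an isomorphism on $H_2$, so the surface basis $\{L_i,D_i\}$ (which may be isotoped off the ball $B$ where the move occurs) remains a basis of $H_2(W')$ realizing the same hyperbolic intersection pattern, giving (ii). Since (iii) is automatic at level $0$, $W'$ is a $(0)$-solution, and therefore $L'$ is $(0)$-solvable.

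The main obstacle is the homological claim of the second paragraph: one must write down the explicit local model for the band pass cobordism and verify that it is a homology cobordism --- equivalently, that the two saddles of the movie are geometrically dual and cancel on homology. This is the one place where the defining feature of the move (doubled strands, hence an even, cancelling contribution) is used, and it is precisely what distinguishes a band pass from an ordinary crossing change, which need not preserve $(0)$-solvability. I would note that the argument is robust: even if $V$ should fail to be a strict homology cobordism and instead only contribute a hyperbolic summand to $H_2$, the gluing still produces a $(0)$-solution, since appending a hyperbolic pair of dual surfaces of trivial normal bundle to $\{L_i,D_i\}$ preserves conditions (i)--(iii).
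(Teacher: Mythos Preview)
The paper itself gives no proof of this lemma: it is attributed to Martin, and the surrounding remark explicitly defers to~[TM] for a complete argument. So there is nothing in the paper to compare your proposal against directly; I can only assess it on its own merits.

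Your overall strategy---build a cobordism $V$ from $M_L$ to $M_{L'}$, glue it to a given $(0)$-solution $W$, and run Mayer--Vietoris---is exactly the right framework, and your observation that condition~(iii) is vacuous at level~$0$ together with the fallback that a hyperbolic summand in $H_2(V)$ is harmless are both correct and important. Where the proposal has a genuine gap is in the construction of~$V$. Your movie of ``cut one band via a saddle, pass, reglue via another saddle'' is a link cobordism in $S^3\times I$, but a saddle changes the number of link components, and hence the number of $0$-framed $2$-handles in the surgery description; there is no evident way to extend zero-surgery continuously across such a movie to produce a $4$-manifold with boundary $M_L\sqcup(-M_{L'})$. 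The phrase ``extending the zero-framed surgery construction over this movie'' hides exactly the difficulty.

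The approach that actually works---and is what Martin carries out---is to stay on the surgered side throughout: realize the band pass as integral surgery on an explicit framed link $\gamma$ lying in $S^3\setminus L\subset M_L$ and supported in the ball~$B$, so that $V=(M_L\times I)\cup(\text{$2$-handles along }\gamma\times\{1\})$ has $\partial V=M_L\sqcup(-M_{L'})$ on the nose. One then checks directly, from the local picture, that the attaching curves are null-homologous in $M_L$ and that the cores of the new $2$-handles together with explicit dual surfaces form a hyperbolic pair with trivial normal bundles. This is precisely your fallback scenario, and once $V$ is built this way your Mayer--Vietoris argument goes through verbatim. The ``doubled strands'' feature you correctly identify is what makes the attaching curves null-homologous (each meridian of a band links the two strands algebraically zero times), which is why a band pass works and a single crossing change does not.
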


\begin{proposition}If $L$ is any link of $m$ components, then $BD(L)$ is $(0)$-solvable.\end{proposition}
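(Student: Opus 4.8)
The plan is to reduce the general case to that of the unlink, using the fact that a band pass move preserves $(0)$-solvability (Lemma~\ref{lemma:T11}). Two preliminary observations make this strategy viable. First, $(0)$-solvability is a genuinely low-order condition: since $\pi_1(W)^{(0)}=\pi_1(W)$, condition (iii) in the definition is automatic, so we need only produce a $4$-manifold $W$ bounded by $M_{BD(L)}$ satisfying (i) and (ii); in particular every slice link is $(0)$-solvable. Second, $BD(L)$ always has vanishing pairwise linking numbers, so $H_1(M_{BD(L)})\cong\mathbb{Z}^{2m}$ as required. For the base case, note that in the infection description $BD(L)=((S^3-L_{BD})-H)\cup_\phi(D^2\times I-J)$, taking $L$ to be the $m$-component unlink lets us take $J$ to be the trivial string link; then $D^2\times I-J$ is exactly the handlebody $H$, and regluing recovers $S^3-L_{BD}$. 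Hence $BD(\text{unlink})=L_{BD}$ is the $2m$-component unlink, which is slice and therefore $(0)$-solvable.

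Next I would set up the correspondence between crossing changes on $L$ and band pass moves on $BD(L)$. Recall that any $m$-component link can be converted into the $m$-component unlink by a finite sequence of crossing changes: changing self-crossings unknots each component, and changing crossings between distinct components unlinks them. The key geometric claim is that each such crossing change on $L$ is realized, after Bing doubling, by a single band pass move on $BD(L)$. Locally, Bing doubling replaces each strand of $L$ by a clasped pair of parallel strands, i.e.\ by a band carrying the Bing clasp; the clasp is confined to the tips and is untouched by the move. A crossing between two strands of $L$ thereby becomes a crossing between two such bands, and switching the original crossing is effected precisely by passing one band through the other, which is the band pass move of Figure~\ref{fig:bandpass}.

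Granting this correspondence, the argument assembles by induction. Let $L=L^{(0)},L^{(1)},\dots,L^{(N)}=U$ be a sequence in which $U$ is the unlink and each $L^{(j+1)}$ is obtained from $L^{(j)}$ by one crossing change. By the correspondence, $BD(L^{(j+1)})$ is obtained from $BD(L^{(j)})$ by a single band pass move, so by Lemma~\ref{lemma:T11} the link $BD(L^{(j)})$ is $(0)$-solvable if and only if $BD(L^{(j+1)})$ is. Since $BD(U)$ is the unlink and hence $(0)$-solvable, reverse induction on $j$ yields that $BD(L)=BD(L^{(0)})$ is $(0)$-solvable, which is the desired conclusion.

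The main obstacle is the geometric claim of the second paragraph: one must verify carefully, at the level of local diagrams, that a single crossing change on $L$ induces exactly one band pass move on $BD(L)$, handling both self-crossings and crossings between distinct components, and that the two doubled bands meet in the configuration of Figure~\ref{fig:bandpass} rather than in some other band move. Matching the orientations and the framings of the bands—so that the move is genuinely the band pass move to which Martin's lemma applies—is where the care is needed; once that local model is confirmed, the remainder of the proof is the formal induction above.
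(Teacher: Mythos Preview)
Your proposal is correct and follows essentially the same approach as the paper: both reduce to the trivial link via crossing changes on $L$, observe that these become band pass moves on $BD(L)$, and then invoke Lemma~\ref{lemma:T11} together with the $(0)$-solvability of the trivial link. Your write-up is more careful than the paper's---you explicitly verify the base case $BD(U)=L_{BD}$, set up the induction formally, and flag the local diagrammatic check that a crossing change becomes a band pass move---but the underlying argument is the same.
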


\begin{proof} Let $L$ be an $m$-component link in $S^3$.  The Bing double, $BD(L)$ is band pass equivalent to the trivial link of $2m$ components, arising from the fact that any link can be transformed into the trivial link by a finite number of crossing changes.  Since the trivial link is (0)-solvable and band pass moves preserve (0)-solvability, $BD(L)$ is (0)-solvable.
\end{proof}

We will also consider several other geometric moves, illustrated in Figures~\ref{fig:deltaclasp} and ~\ref{fig:doublemoves}.

\begin{figure}[h!]
  \centering
  \includegraphics[height=2cm]{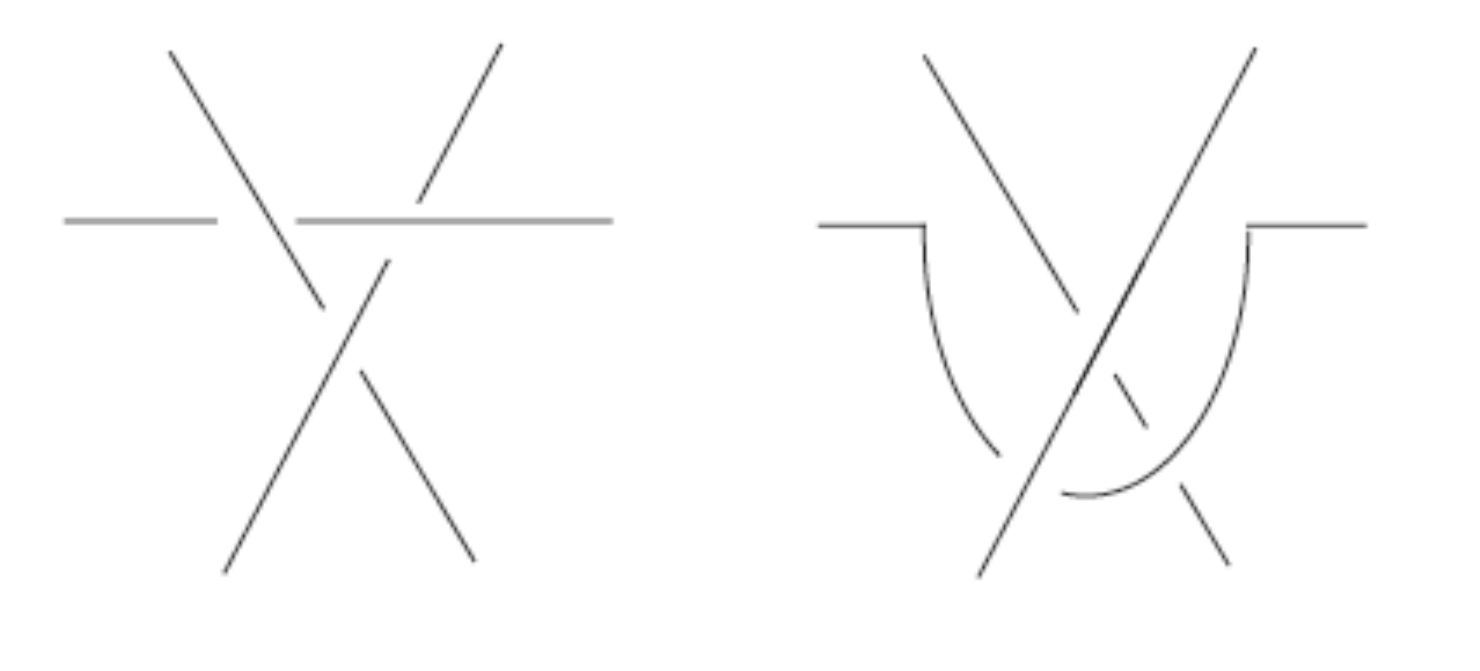}\qquad \includegraphics[height=2cm]{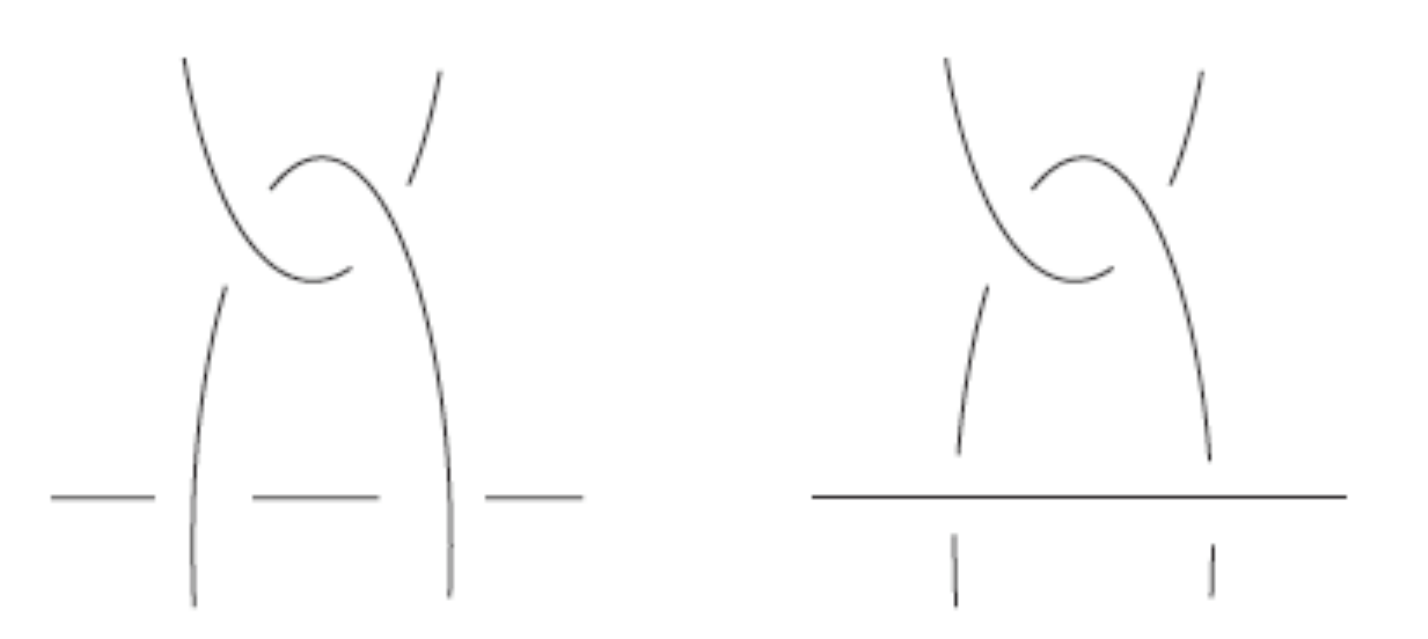}\put(-225,30){$\longleftrightarrow$}
  \put(-75,30){$\longleftrightarrow$}
  \caption[Delta and half-clasp moves]%
  {The delta move and the half-clasp move}
  \label{fig:deltaclasp}
\end{figure}

\begin{figure}[h!]
  \centering
  \includegraphics[height=3cm]{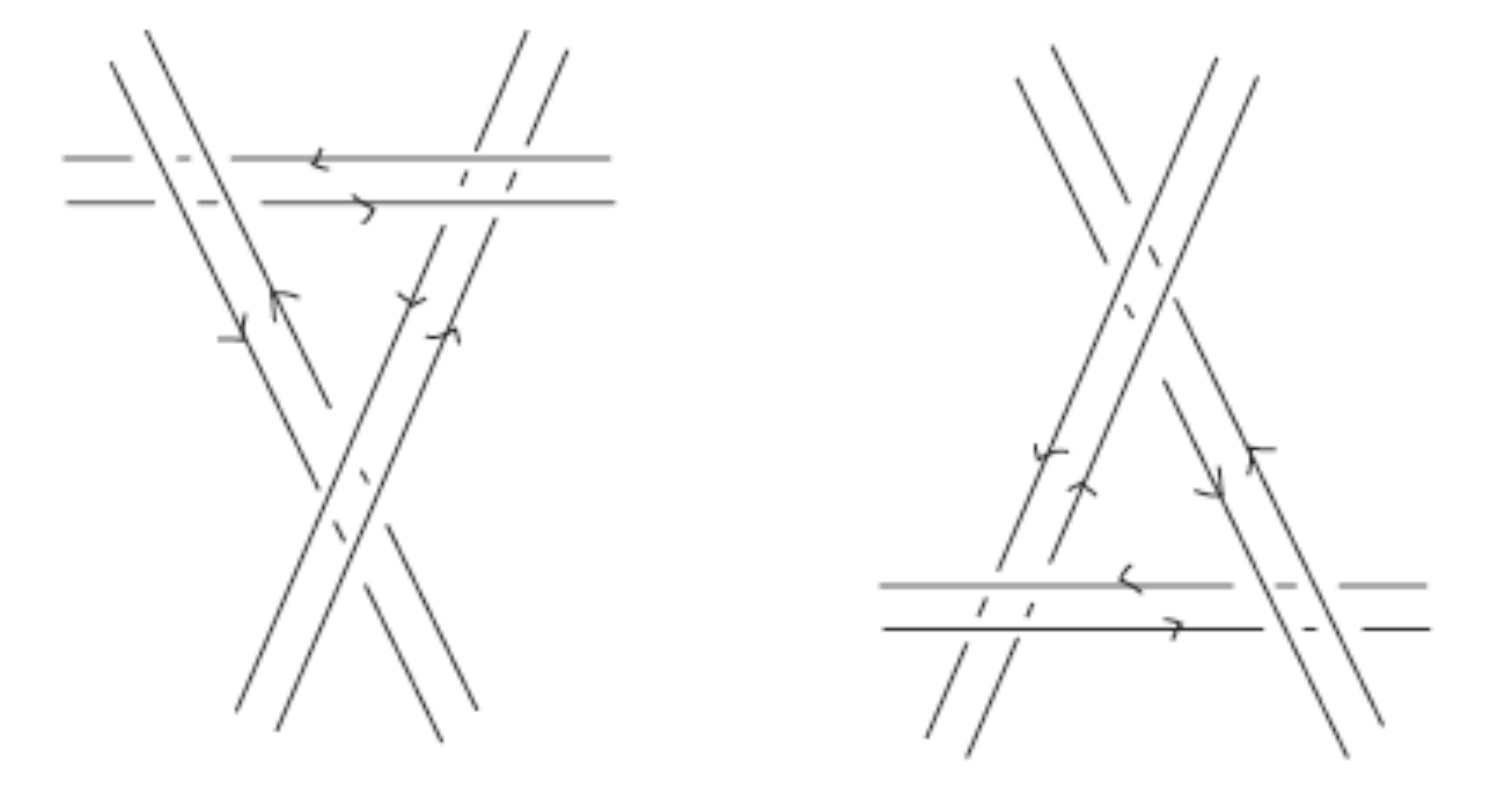}\qquad \qquad \includegraphics[height=3cm]{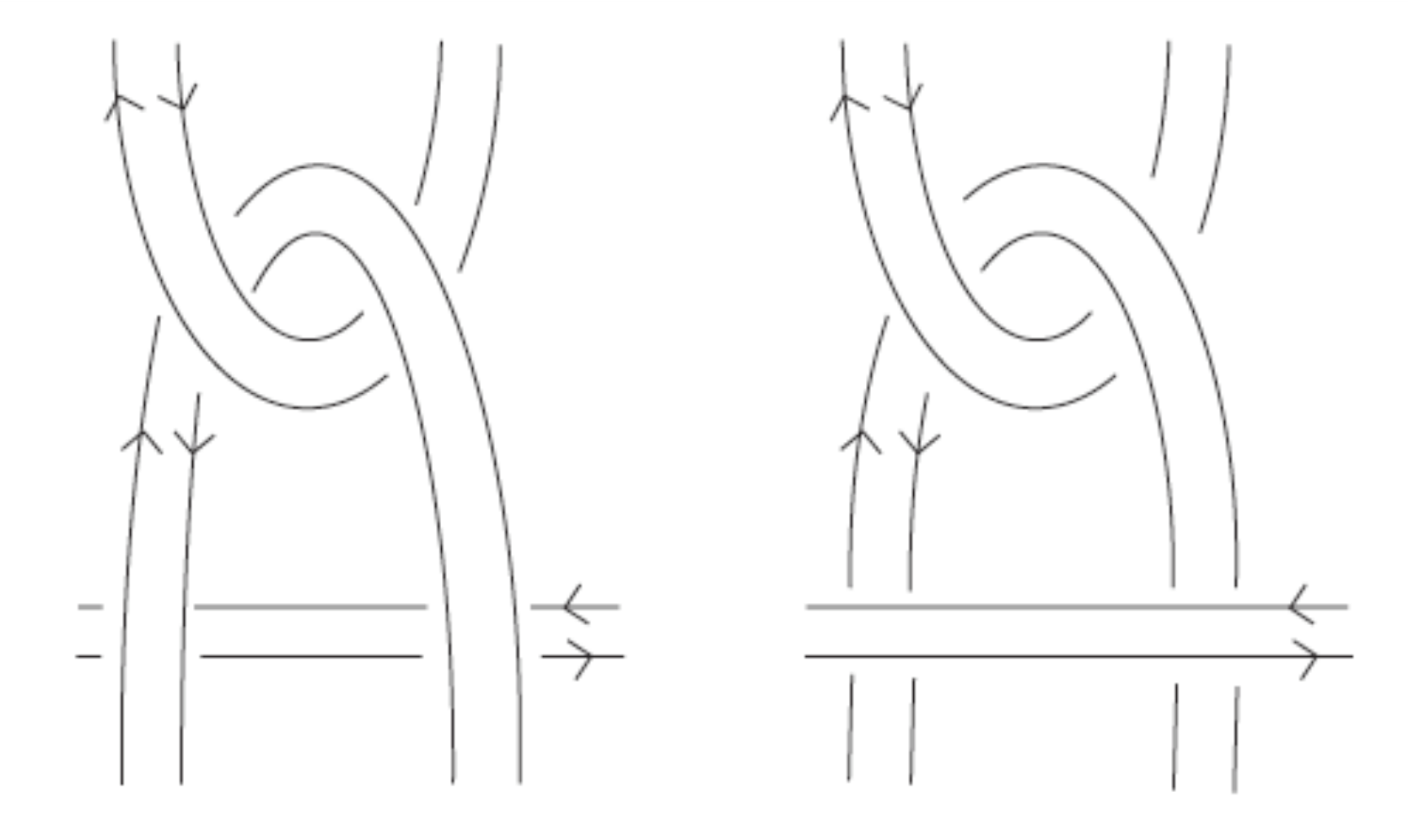}
  \put(-82,50){$\longleftrightarrow$}
  \put(-275,50){$\longleftrightarrow$}
  \caption[Double delta and double half-clasp moves]%
  {The double delta move and the double half-clasp move}
  \label{fig:doublemoves}
\end{figure}

\begin{lemma}
The delta move can be realized as a half-clasp move.  Moreover, the double delta move can be realized by a double half-clasp move.
\end{lemma}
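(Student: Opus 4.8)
The plan is to prove both statements by an explicit isotopy argument, since each of the four moves is \emph{local}: it is supported in a $3$-ball $B$ and agrees with the identity outside $B$. Thus to show that the delta move can be realized as a half-clasp move it suffices to produce an ambient isotopy of $B$, fixing $\partial B$, that carries the ``before'' tangle of the delta move to the ``before'' tangle of a half-clasp move and simultaneously carries the ``after'' tangle to the corresponding ``after'' tangle. I would begin by fixing the local models from Figure~\ref{fig:deltaclasp}: the delta move acts on three arcs arranged so that each pair crosses, while the half-clasp move acts on two arcs. The first step is to isotope two of the three arcs of the delta configuration so that they run parallel and cobound a thin band, with the third arc passing between them.

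The key step is then to observe that, once the three arcs are placed in this position, the triangular passage effected by the delta move is carried out precisely by introducing (or deleting) a single half-clasp between the third arc and the band formed by the two parallel arcs. Concretely, the delta move alters three crossings; after the isotopy above, two of these crossings form a cancelling pair removed by a Reidemeister~II move, and the remaining change is exactly the crossing change of a half-clasp move. I would verify that the over/under data match on both sides of the move, so that the half-clasp produced has the same sign as the strand passage of the delta move. This identifies the delta move with a half-clasp move in the sense required.

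For the double versions in Figure~\ref{fig:doublemoves}, I would argue by naturality under doubling. The double delta move and the double half-clasp move are obtained from their single counterparts by replacing each arc with a pair of zero-framed parallel push-offs. Because the ambient isotopy of $B$ constructed above is supported in the ball and fixes its boundary, taking parallel push-offs of every arc transports that isotopy to an isotopy between the doubled ``before'' and ``after'' tangles; hence the double delta move is realized by the double half-clasp move. The main obstacle I anticipate is bookkeeping rather than conceptual: tracking the over/under information through the isotopy so that the crossing sign of the resulting half-clasp genuinely agrees with that of the delta passage, and checking in the doubled case that the parallel copies are framed consistently so that no spurious clasp between push-offs is created during the isotopy.
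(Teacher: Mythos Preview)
Your proposal follows essentially the same route as the paper: the paper's proof is purely pictorial, giving a five-frame sequence (Figure~\ref{fig:isoseq}) of isotopies, one half-clasp move, and a final isotopy that transforms the delta ``before'' tangle into the delta ``after'' tangle, and then dismisses the doubled case with ``this result is easily adaptable for the double of the moves.'' Your verbal outline is the same argument, and your naturality-under-doubling justification for the second statement is, if anything, slightly more principled than the paper's one-line remark.

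One small framing issue worth tightening: you ask for \emph{one} ambient isotopy of $B$ that carries before to before and after to after simultaneously, but that is not quite what is needed (nor what you actually do in your second paragraph). What you want is an isotopy of the delta-before tangle to some tangle $T_1$, a half-clasp move $T_1\to T_2$, and an isotopy from $T_2$ to the delta-after tangle; these are two distinct isotopies, not a single diffeomorphism of the ball. Also, your crossing bookkeeping (``two crossings cancel by Reidemeister~II, the remaining change is the crossing change of a half-clasp'') presupposes a particular model of the half-clasp move as a single crossing change; since the paper's argument is picture-driven, you should check that description against the local model in Figure~\ref{fig:deltaclasp} rather than assert it.
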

\begin{proof}
The images in Figure~\ref{fig:isoseq} illustrate how to use isotopy and a half-clasp move to achieve the delta move.  This result is easily adaptable for the double of the moves.

\begin{figure}[h!]
  \centering
  \includegraphics[height=3cm]{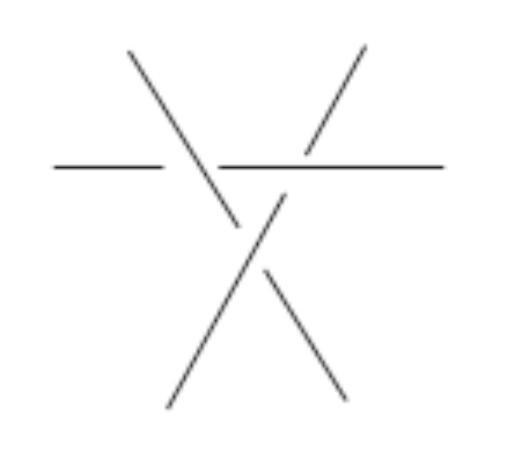}\qquad \includegraphics[height=3cm]{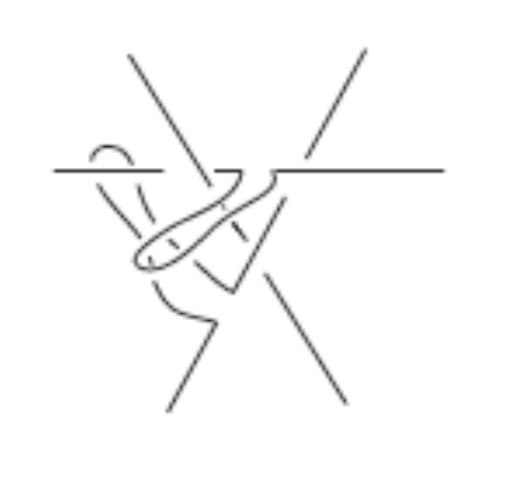} \qquad \includegraphics[height=3cm]{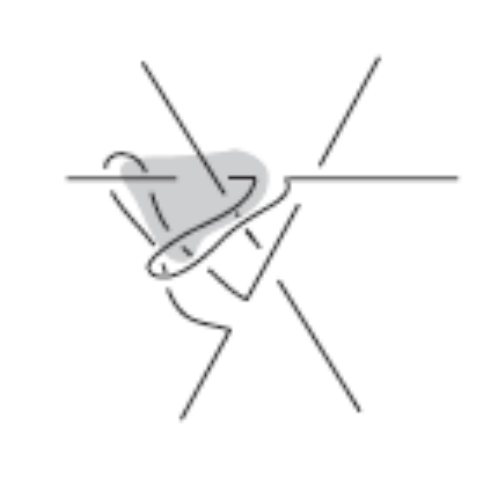}
   \put(-235,60){\small isotopy}
  \put(-230,50){$\longrightarrow$}
  \put(-108, 50){$\longrightarrow$}\\
  \includegraphics[height=3cm]{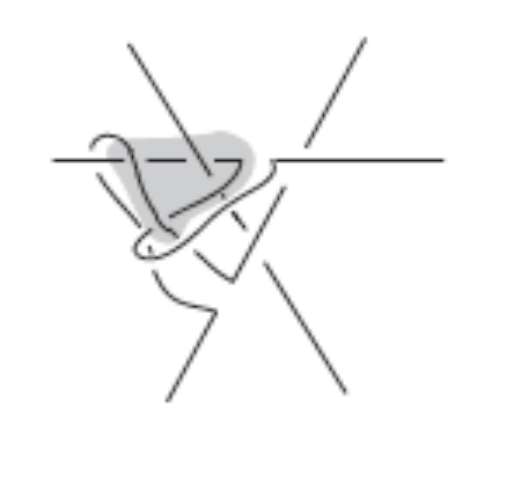}\qquad \includegraphics[height=3cm]{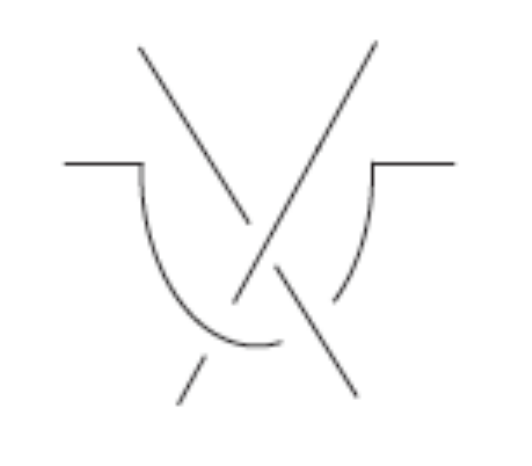}
  \put(-245,60){\small half}
  \put(-247,40){\small clasp}
  \put(-245,50){$\longrightarrow$}
  \put(-123,60){\small isotopy}
  \put(-118,50){$\longrightarrow$}
  \caption[Obtaining the delta move by a half clasp move]%
  {Sequence of isotopy and half-clasp moves to achieve the delta move.}
  \label{fig:isoseq}
\end{figure}

\end{proof}

Martin established a relationship between ($0.5$)-solvability and the double half-clasp move which is given in the following lemma.
\begin{lemma}[Martin]
The double half-clasp move preserves $(0.5)$-solvability.
\label{lemma:T12}
\end{lemma}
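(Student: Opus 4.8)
The plan is to prove the reversible implication: if $L'$ is obtained from $L$ by a double half-clasp move and $L$ is $(0.5)$-solvable, then $L'$ is $(0.5)$-solvable. Fix a $(0.5)$-solution $W$ for $L$, so that $M_L=\partial W$, the inclusion induces an isomorphism on $H_1$, and $H_2(W)$ has a basis of surfaces $\{L_i,D_i\}_{i=1}^r$ realizing the standard intersection form with $\pi_1(L_i)\subset \pi_1(W)^{(1)}$ and $\pi_1(D_i)\subset \pi_1(W)^{(0)}$. Since the double half-clasp move is supported in a ball, $M_L$ and $M_{L'}$ agree outside a compact piece, and I would first realize the move as surgery in $M_L\times I$ along a framed curve $\gamma$ encircling the strands involved in the move. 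Attaching the corresponding $2$-handle (with a cancelling $3$-handle so the outgoing boundary is again a zero-surgery manifold) produces a cobordism $C$ with $\partial C = M_L \sqcup -M_{L'}$, and I would set $W':=W\cup_{M_L}C$, a compact oriented $4$-manifold with $\partial W' = M_{L'}$.

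The homological conditions are the routine part. Because the double half-clasp move does not change linking numbers, the attaching curve $\gamma$ is null-homologous in $M_L$; hence attaching the handles leaves $H_1$ unchanged and $H_1(M_{L'})\to H_1(W')$ remains an isomorphism. The $2$-handle and its cancelling $3$-handle contribute a single new dual pair $\{L_{r+1},D_{r+1}\}$ to a basis of $H_2(W')$, meeting transversely once with positive sign, while the old pairs $\{L_i,D_i\}$ are unaffected; this preserves the required intersection form. Here $L_{r+1}$ is represented by the core disk of the $2$-handle capped off by a surface $S$ bounded by $\gamma$ inside $W$, and $D_{r+1}$ is represented by the cocore, so $\pi_1(D_{r+1})\subset \pi_1(W')^{(0)}$ automatically.

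The crux is the derived-series condition, and this is exactly where the word \emph{double} does all the work. For a single half-clasp (equivalently, single delta) move the attaching curve lies only in $\pi_1^{(0)}=\pi_1$, so the new $L$-surface need not sit in the first derived subgroup; consequently a single move preserves only $(0)$-solvability, which is the content of Martin's Lemma~\ref{lemma:T11} and requires nothing of $\pi_1(L_{r+1})$. Doubling the move replaces $\gamma$ by a commutator of the two parallel half-clasp curves, so that $\gamma$ — and, after a careful choice of the bounding surface $S$, the surface $L_{r+1}$ itself — lands in $\pi_1(W')^{(1)}$, which is precisely the extra condition distinguishing a $(0.5)$-solution from a $(0)$-solution. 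The main obstacle is verifying this last point rigorously: one must track $\gamma$ through the inclusion $\pi_1(W)\hookrightarrow\pi_1(W')$ and choose $S$ so that $\pi_1(L_{r+1})\subset \pi_1(W')^{(1)}$, rather than merely establishing that $\gamma$ is null-homologous. Confirming that the doubling forces the new generator this deep into the derived series, while leaving the derived-series depth of the old surfaces $\{L_i,D_i\}$ intact, is the step that genuinely uses the hypothesis and that fails for the single half-clasp move.
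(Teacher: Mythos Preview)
The paper does not actually prove this lemma: immediately before Lemma~\ref{lemma:T11} the author remarks that both Lemma~\ref{lemma:T11} and Lemma~\ref{lemma:T12} are results of Taylor Martin, with complete proofs in~\cite{TM}. So there is no proof in the present paper to compare your proposal against; the lemma is quoted as a black box.

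That said, your sketch has an internal inconsistency worth flagging. You describe attaching a $2$-handle together with a \emph{cancelling} $3$-handle, and then assert that this pair ``contribute a single new dual pair $\{L_{r+1},D_{r+1}\}$'' to $H_2(W')$. These two claims are incompatible: if the $3$-handle genuinely cancels the $2$-handle, the pair contributes nothing to $H_2$, while if you attach only a $2$-handle along a null-homologous curve with framing $\pm 1$ (the usual way to realize a local move by surgery), you add a single generator with self-intersection $\pm 1$, not a hyperbolic pair. In either case you do not obtain a new $\{L_{r+1},D_{r+1}\}$ in the form required by the definition of $(0.5)$-solvability. The actual cobordism realizing a (double) half-clasp move is more delicate than a single $2$-handle, and getting the new $H_2$ classes to come in genuine dual pairs is part of the work, not a formality.

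Your instinct about where the word \emph{double} enters is correct in spirit --- doubling is what pushes the relevant curves into $\pi_1^{(1)}$ rather than merely $\pi_1^{(0)}$ --- but the sentence ``doubling the move replaces $\gamma$ by a commutator of the two parallel half-clasp curves'' is an assertion, not an argument, and even granting that $\gamma\in\pi_1(W')^{(1)}$ you still need $\pi_1(L_{r+1})\subset\pi_1(W')^{(1)}$, which concerns a generating set for the closed surface, not just its bounding curve. You acknowledge this as ``the main obstacle,'' and indeed it is; resolving it requires a concrete description of the cobordism and of the capping surface $S$, which your proposal does not supply.
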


\begin{proposition} If $L\in \mathcal{F}^m_{-0.5}$, then $BD(L)$ is $(0.5)$-solvable.
\end{proposition}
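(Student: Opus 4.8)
The plan is to mimic the proof of the preceding proposition, in which $BD(L)$ was shown to be $(0)$-solvable by reducing it to the trivial link through band pass moves and invoking that band pass moves preserve $(0)$-solvability. Here the extra hypothesis $L\in\mathcal{F}^m_{-0.5}$, namely that every pairwise linking number of $L$ vanishes, should allow me to reduce $BD(L)$ to the trivial $2m$-component link using only moves that preserve the finer notion of $(0.5)$-solvability, namely double half-clasp moves. Since the trivial link is $(0.5)$-solvable and, by Lemma~\ref{lemma:T12}, the double half-clasp move preserves $(0.5)$-solvability, this will yield the conclusion.

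First I would invoke the classical theorem of Murakami and Nakanishi, which states that two links are related by a finite sequence of delta moves if and only if they have the same pairwise linking numbers. Because every pairwise linking number of $L$ is zero, $L$ is delta-move equivalent to the $m$-component trivial link $U_m$; this is precisely the point at which the hypothesis $L\in\mathcal{F}^m_{-0.5}$ is used. Next I would transport this sequence of delta moves through the Bing doubling operator. Each delta move on $L$ is supported in a small ball meeting $L$ in three arcs; under Bing doubling these three arcs become three parallel doubled arcs, so the delta move becomes a delta move performed on three doubled strands, that is, a double delta move on $BD(L)$. Since Bing doubling $U_m$ yields the trivial link $U_{2m}$, as in the construction of $L_{BD}$, the link $BD(L)$ is related to $U_{2m}$ by a finite sequence of double delta moves. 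By the Lemma identifying the double delta move with the double half-clasp move, each of these moves is realized by a double half-clasp move, and Lemma~\ref{lemma:T12} then shows that $(0.5)$-solvability is preserved along the entire sequence. As $U_{2m}$ is $(0.5)$-solvable, it follows that $BD(L)$ is $(0.5)$-solvable.

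The main obstacle is the geometric assertion that Bing doubling a single delta move produces exactly a double delta move, which is the $(0.5)$-analogue of the statement that Bing doubling a crossing change produces a band pass move used in the $(0)$-solvable case. I would verify it by a purely local analysis inside the supporting ball: since the delta move and its realizing half-clasp move are local, it suffices to check the correspondence on the standard three-strand tangle, where doubling each strand visibly carries the delta tangle to the double delta tangle of Figure~\ref{fig:doublemoves}. Some care is required to ensure that the clasps introduced by Bing doubling lie outside every ball in which a delta move is performed, which can always be arranged by an isotopy pushing the clasps away from the supporting balls. Finally, the string link version of the statement follows by passing to closures: $BD$ of a string link is defined via its closure, the closure of a $(-0.5)$-solvable string link has vanishing pairwise linking numbers, and $(0.5)$-solvability of a string link means $(0.5)$-solvability of its closure, so the link-level argument applies verbatim.
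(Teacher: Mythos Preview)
Your proposal is correct and follows essentially the same strategy as the paper: use the Murakami--Nakanishi result to reduce $L$ to the unlink by delta moves, observe that under Bing doubling each delta move becomes a double delta move, and then invoke Lemma~\ref{lemma:T12} (together with the lemma realizing the double delta move as a double half-clasp) to conclude. The only real difference is that the paper makes the ``delta $\to$ double delta'' step precise by working inside the multi-infection construction---choosing a string link $J$ with $\hat J\simeq L$ and vanishing pairwise linking numbers (citing the Naik--Stanford string link version of the delta-equivalence theorem), so that delta moves on $J$ inside the handlebody manifestly become double delta moves on the doubled string link $\tilde J$---whereas you carry out the same verification by a direct local analysis of the tangle.
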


\begin{proof} Suppose $L$ has all pairwise linking numbers equal to zero.  It was shown (~\cite{MN}, and~\cite{MA}) that two links are equivalent by delta moves if and only if they have the same pairwise linking numbers.  This result was generalized for string links~\cite{NS}.  Recall that in our construction of Bing doubling of a link, we chose a string link $J$ such that $\hat J$ is isotopic to $L$.  Since $\hat J$ has all pairwise linking numbers equal to zero by assumption, $J$ can be chosen to have all pairwise linking numbers equal to zero as a string link.

In the construction of Bing doubling we can see that the handlebody $H$ was replaced with the exterior of $J$.  As a result of this replacement, we have a new string link $\tilde J$.  Using double delta moves, we are able to get the trivial link (delta moves on $J$ will be double delta moves on $\tilde J$).  Since the double half-clasp move preserves (0.5)-solvability by Lemma~\ref{lemma:T12}, the double delta move will also preserve (0.5)-solvability.  Thus $BD(L)$ is (0.5)-solvable.
\end{proof}

\begin{proposition}
If $L$ is an $(n)$-solvable link, then $BD(L)$ is $(n+1)$-solvable. Moreover,
if $L$ is an $(n.5)$-solvable link, then $BD(L)$ is $((n+1).5)$-solvable.
\label{proposition:bingdouble}\end{proposition}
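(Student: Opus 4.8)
The plan is to exploit the description of Bing doubling as multi-infection given above, together with the derived-series behaviour of infection along a commutator curve. Recall that we wrote
$$BD(L)=((S^3-L_{BD})-H)\cup_\phi(D^2\times I-J),$$
where $L_{BD}$ is the trivial $2m$-component link, $J$ is a string link with $\widehat J$ isotopic to $L$, and $\phi$ matches the longitudes and meridians of $J$ with the curves $\gamma_i,\eta_i$ of Figure~\ref{fig:identifysl}. The single geometric input I would isolate first is that each infection curve $\eta_i$ lies in the commutator subgroup $\pi_1(S^3-L_{BD})^{(1)}$: indeed $\eta_i$ is the companion longitude of the $i^{\text{th}}$ Bing doubling, and for the Bing double pattern this longitude is isotopic, in the complement of the corresponding clasped pair, to the commutator $[\mu_{A_i},\mu_{B_i}]$ of the two new meridians. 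In particular each $\eta_i$ is null-homologous in $S^3-L_{BD}$.

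Since $L$ is $(n)$-solvable (resp. $(n.5)$-solvable), so is the string link $J$, and hence $M_L=M_{\widehat J}$ bounds an $(n)$-solution (resp. $(n.5)$-solution) $V$. I would then build a $4$-manifold $W$ bounding $M_{BD(L)}$ by performing the infection cut-and-paste one dimension up: start with the standard manifold $Y_0=\natural^{2m}(S^1\times D^3)$ bounding $M_{L_{BD}}=\#^{2m}(S^1\times S^2)$, which satisfies $H_2(Y_0)=0$ and induces an isomorphism on $H_1$, and glue $V$ onto $Y_0$ along the pieces prescribed by $\phi$ (matching the meridian of $J$ to $\eta_i^{-1}$). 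Because $H_2(Y_0)=0$ and the gluing region is homologically trivial in degree two, a Mayer--Vietoris computation should give $H_2(W)\cong H_2(V)$, so that the surface basis $\{L_i^V,D_i^V\}$ of the $(n)$-solution, together with its intersection form and geometric duals, transports to a basis of $H_2(W)$ satisfying condition (ii) of the definition of solvability. Likewise condition (i), $H_1(M_{BD(L)})\cong H_1(W)\cong\mathbb{Z}^{2m}$, is inherited from $Y_0$, since the $V$-part contributes nothing to $H_1(W)$.

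The heart of the argument is the derived-series bookkeeping that upgrades the depth by one. By van Kampen applied to the gluing, the image of $\pi_1(V)$ in $\pi_1(W)$ is normally generated by the meridians of $J$, each of which is identified with some $\eta_i^{-1}$; since the $\eta_i$ are null-homologous in $S^3-L_{BD}$ their images lie in $\pi_1(W)^{(1)}$, and as $\pi_1(W)^{(1)}$ is normal the \emph{entire} image of $\pi_1(V)$ lies in $\pi_1(W)^{(1)}$. Equivalently, the image of $\pi_1(V)$ dies in $H_1(W)=\pi_1(W)/\pi_1(W)^{(1)}$. Using the identity $(G^{(1)})^{(n)}=G^{(n+1)}$ with $G=\pi_1(W)$, the image of $\pi_1(V)^{(n)}$ then lies in $\pi_1(W)^{(n+1)}$. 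Since the surfaces $L_i^V,D_i^V$ have $\pi_1\subset\pi_1(V)^{(n)}$, their images satisfy $\pi_1(L_i^V),\pi_1(D_i^V)\subset\pi_1(W)^{(n+1)}$, which is condition (iii) for an $(n+1)$-solution; this gives the first assertion. For the \emph{moreover}, if $V$ is an $(n.5)$-solution then $\pi_1(L_i^V)\subset\pi_1(V)^{(n+1)}$, so the same computation places the images of the $L_i^V$ in $\pi_1(W)^{(n+2)}$ while the $D_i^V$ remain in $\pi_1(W)^{(n+1)}$, which is exactly the requirement for an $((n+1).5)$-solution.

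The step I expect to be the main obstacle is making the two bookkeeping claims fully rigorous: verifying that the four-dimensional cut-and-paste really yields $H_2(W)\cong H_2(V)$ with the intersection form and the dual surfaces $\{L_i^V,D_i^V\}$ intact (so that condition (ii) genuinely holds for $W$), and pinning down that the image of $\pi_1(V)$ is normally generated by meridians so that the claim that the image of $\pi_1(V)$ lies in $\pi_1(W)^{(1)}$ is justified. Once the curve $\eta_i$ is confirmed to be a commutator and these two homological and group-theoretic points are settled, the increase in solvability is forced by the derived-series identity $(G^{(1)})^{(n)}=G^{(n+1)}$.
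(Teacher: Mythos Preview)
Your outline is correct and is essentially the same strategy as the paper's. The paper packages the construction slightly differently: it first builds an explicit cobordism $X$ with $\partial X = M_L \sqcup M_{L_{BD}} \sqcup -M_{BD(L)}$ by gluing $M_L\times[0,1]$ to $M_{L_{BD}}\times[0,1]$ along the handlebody $V=D^2\times I$, and then caps off the $M_L$ end with the $(n)$-solution and the $M_{L_{BD}}$ end with the slice disk complement $S=B^4-\mathbb{D}$. Your $Y_0=\natural^{2m}(S^1\times D^3)$ is exactly that slice disk complement, so once you make ``glue $V$ onto $Y_0$ along the pieces prescribed by $\phi$'' precise you will have rebuilt the paper's $E$. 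The Mayer--Vietoris computations you anticipate are carried out there and give $H_2(E)\cong H_2(\text{solution})$ and $H_1(M_{BD(L)})\cong H_1(E)$, exactly as you predict.

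One small correction on the $\pi_1$ step: the claim that the image of $\pi_1(V)$ in $\pi_1(W)$ is \emph{normally generated} by the meridians of $J$ is not what you need and is not obviously true, since $\pi_1(V)$ is the fundamental group of an arbitrary $4$-manifold and need not be normally generated by meridians. The argument you give immediately afterward is the right one and is what the paper does: showing $\pi_1(V)\to\pi_1(W)^{(1)}$ is the same as showing the composite $\pi_1(V)\to H_1(W)$ vanishes, and this factors through $H_1(V)\cong H_1(M_L)\cong\mathbb{Z}^m$, which \emph{is} generated by meridians; since each meridian maps to a null-homologous $\eta_i$, the map on $H_1$ is zero. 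So drop the ``normally generated'' formulation and run the $H_1$ argument directly; then the derived-series identity $(G^{(1)})^{(n)}=G^{(n+1)}$ finishes both statements exactly as you wrote.
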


\begin{proof}
Suppose $L$ is an ($n$)-solvable link with $m$ components.  We will construct an ($n+1$)-solution for $BD(L)$ by first constructing a cobordism between $M_L$ and $M_{BD(L)}$.  Suppose $J$ is a string link such that $\hat J$ is isotopic to $L$.  Then $M_L=(D^2\times I -J) \cup (D^2 \times I-\text{trivial string link})$.  Consider $M_L \times [0,1]$ and $M_{L_{BD}} \times [0,1]$.  Recall that $L_{BD}$ was isotopic to the $2m$-component trivial link.  Let $V$ be the handlebody $D^2 \times I$. Glue $M_L \times \{1\}$ to $M_{L_{BD}} \times \{1\}$ by identifying $V \subset M_L \times \{1\}$ with $V\subset M_{L_{BD}} \times \{1\}.$  Call the resulting space $X$ (see Figure~\ref{fig:cobordism}).  Then $\partial X= M_L \coprod M_{L_{BD}} \coprod -M_{BD(L)}$.

\begin{figure}[h!]
  \centering
  \includegraphics[height=5cm]{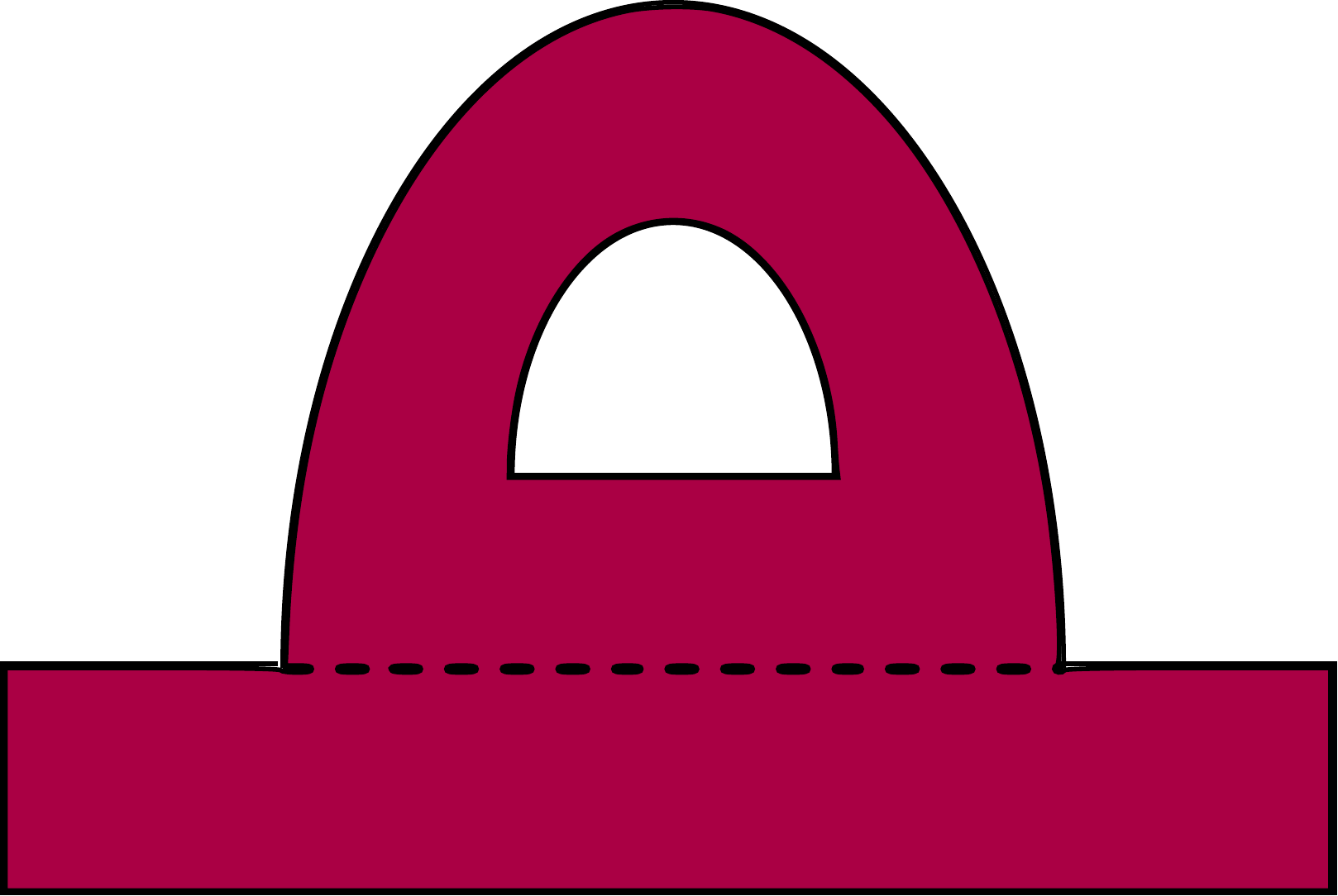}
  \put(-9,-13){$M_{L_{BD}} \times \{0\}$}
  \put(-9,40){$M_{L_{BD}} \times \{1\}$}
  \put(-111,42){$V$}
  \put(-45,110){$M_{L} \times \{1\}$}
  \put(-45,70){$M_{L} \times \{0\}$}
  \put (-75,70){$\longleftarrow$}
  \caption[Cobordism between $M_L$ and $M_{BD(L)}$]%
  {The space $X$}
  \label{fig:cobordism}
\end{figure}

To proceed, we need the following lemma.
\begin{lemma}
With $X$ as above, the inclusion maps induce the following
\begin{itemize}
\item[i.] isomorphisms $H_1(M_{L_{BD}}) \rightarrow H_1(X)$ and $H_1(M_{BD(L)}) \rightarrow H_1(X)$;
\item[ii.] isomorphism $H_2(X) \cong H_2(M_{L_{BD}}) \oplus H_2(M_L)$.
\end{itemize}
\label{lemma:isobyinclusion}
\end{lemma}

\begin{proof}
Consider the following diagram of inclusion maps.

\[
\begin{diagram}
\node[2]{V} \arrow{sw,t}{i_1} \arrow{se,t}{i_2}\\
\node{M_{L_{BD}} \times [0,1]} \arrow{se,b}{j_2}
\node[2]{M_L \times [0,1]} \arrow{sw,b}{j_1} \\
\node[2]{X}
\end{diagram}
\]
Using Mayer Vietoris, the maps above induce the following long exact sequence (in reduced homology), where $I_*=(i_{1*}, i_{2*})$ and $J_*=j_{1*}-j_{2*}$ (the homology groups are with $\mathbb{Z}$ coefficients)
$$\begin{CD}
\cdots @>\partial_*>>H_2(V)  @>I_*>>H_2(M_{L_{BD}})\oplus H_2(M_L) @>J_*>>H_2(X)@>\partial_*>>\end{CD}$$
$$\begin{CD} H_1(V) @>I_*>>H_1(M_{L_{BD}})\oplus H_1(M_L)@>J_*>>H_1(X)@>\partial_*>>0.
\end{CD}$$

The homology group $H_1(V) \cong \mathbb{Z}^m$ is generated by the meridians, $\mu_i$ of the trivial string link.  Recall that the $\eta_i$s were defined in the construction of Bing doubling.  Now $i_{1*}(\mu_i)=0$ in $S^3-L_{BD} \subset M_{L_{BD}}$ since $\mu_i \sim \eta_i$ and $\eta_i$ is in a commutator subgroup.  Also, $i_{2*}(\mu_i)$ is of infinite order in $S^3-L \subset M_L$ since $\mu_i$ is identified with a meridian of $L$.  Hence $I_*$ is a monomorphism.  Thus the map $\partial_*:H_2(X) \rightarrow H_1(V)$ is the zero map.  By the properties of a long exact sequence,
$H_2(M_{L_{BD}}) \oplus H_2(M_L) \cong H_2(X).$

For the other part of the lemma, consider the first isomorphism theorem.  This gives
$H_1(X) \cong \dfrac{H_1(M_{L_{BD}}) \oplus H_1(M_L)}{\text{image}(I_*:H_1(\eta \times D^2) \rightarrow H_1(M_{L_{BD}})\oplus H_1(M_L))}.$
The image of $I_*$ is precisely $H_1(M_L)$.  Thus $H_1(X) \cong H_1(M_{L_{BD}}) \cong \mathbb{Z}^{2m}$. Now $H_1(M_{BD(L)})$ is generated by the meridians of $BD(L)$ which are isotopic (in $X$) to the meridians of $L_{BD}$.  This means that $H_1(X) \cong H_1(M_{BD(L)})$ which is the desired result.

\end{proof}
We now continue with the proof of the proposition.
Let $S=B^4-\mathbb{D}$ be a slice disk complement where $\mathbb{D} \subset B^4$ is a collection of disjoint and smoothly embedded disks with boundary $L_{BD}$.  Let $W$ be an ($n$)-solution for $L$ and let $E$ be the space obtained by attaching $W$ and $S$ to $X$ along $M_{L \times \{0\}}$ and $M_{L_{BD} \times \{0\}}$ respectively.  Thus $E$ is a 4-manifold with boundary $M_{BD(L)}$.

We claim that $E$ is an ($n+1$)-solution for $BD(L)$.  We begin by showing $E$ is an ($n$)-solution.  Let $\overline{E} = X \cup W$.  Consider the following long exact sequence (with $\mathbb{Z}$-coefficients in reduced homology) obtained by Mayer Vietoris

$$\begin{CD}
\cdots @>\partial_*>>H_2(M_L)  @>I_1>>H_2(X)\oplus H_2(W) @>I_2>>H_2(\overline{E})@>\partial_*>>\end{CD}$$
$$\begin{CD} H_1(M_L) @>I_1>>H_1(X)\oplus H_1(W)@>I_2>>H_1(\overline{E})@>\partial_*>>0.
\end{CD}$$

We have that inclusion induces an isomorphism $H_1(M_L) \cong H_1(W)$.  This together with the facts that $I_2$ on $H_1$ is surjective and $H_1(M_L) \rightarrow H_1(X)$ is the zero map, gives that $H_1(\overline{E}) \cong H_1(X)$.  From Lemma~\ref{lemma:isobyinclusion}, the inclusion maps induce an isomorphism $H_2(X)\cong H_2(M_{L_{BD}}) \oplus H_2(M_L)$.  Thus, by the first isomorphism theorem and definition of exact sequence, we obtain
$H_2(\overline{E})  \cong  \dfrac{H_2(X)\oplus H_2(W)}{\ker(I_2:H_2(X)\oplus H_2(W) \rightarrow H_2(\overline{E}))} \cong  H_2(M_{L_{BD}}) \oplus H_2(W)$.

Notice that $E=\overline{E} \cup S$.  Consider the following long exact sequence on homology given by Mayer Vietoris
$$\begin{CD}
\cdots @>\partial_*>>H_2(M_{L_{BD}})  @>\rho_1>>H_2(\overline{E})\oplus H_2(S) @>\rho_2>>H_2(E)@>\partial_*>>\end{CD}$$
$$\begin{CD} H_1(M_{L_{BD}}) @>\rho_1>>H_1(\overline{E})\oplus H_1(S)@>\rho_2>>H_1(E)@>\partial_*>>0.
\label{cd}\end{CD}$$

Using the facts, $H_1(M_{L_{BD}}) \cong H_1(X)$ induced by inclusion (Lemma~\ref{lemma:isobyinclusion}), $H_2(S)=0$, and $H_1(X) \cong H_1(\overline{E})$, we can again use the first isomorphism theorem to attain the following
$H_2(E)  \cong \dfrac{H_2(\overline{E})}{\ker(\rho_2)}\cong H_2(W).$
This shows that the second condition of ($n$)-solvability is satisfied for the 4-manifold $E$.

For the third condition, the inclusion map $i:W \hookrightarrow E$ gives $i_*(\pi_1(W)^{(n)}) \subseteq \pi_1(E)^{(n)}$.  Since no elements were added to the basis of $H_2(E)$, it has the same basis as $H_2(W)$.  Thus $\pi_1(L_i) \subset \pi_1(W)^{(n)} \subseteq \pi_1(E)^{(n)}$ and similarly for $\pi_1(D_i)$, where $\{L_i, D_i\}$ is a basis for $H_2(W)$.

To check the first condition of ($n$)-solvability, consider again the previous long exact sequence. The first isomorphism theorem tells us
\begin{center}
$\dfrac{H_1(M_{L_{BD}})}{\ker(\rho_1:H_1(M_{L_{BD}}) \rightarrow H_1(\overline{E})\oplus H_1(S))} \cong \text{image}(\rho_1).$
\end{center}
Since the $\ker(\rho_1) = \{0\}$, we have that image$(\rho_1)\cong H_1(M_{L_{BD}})$.  Now, $S$ is an ($n$)-solution for $M_{L_{BD}}$, and thus $H_1(M_{L_{BD}}) \cong H_1(S)$ induced by inclusion.  Using the first isomorphism a final time gives that $H_1(E) \cong H_1(\overline E)$.

By Lemma~\ref{lemma:isobyinclusion} and the above results, the first condition to being ($n$)-solvable is met and $E$ is an ($n$)-solution for $BD(L)$.

We claim further that $E$ is actually an ($n+1$)-solution.  Showing that $\pi_1(W) \subset \pi_1(E)^{(1)}$ (or more precisely, $i_*(\pi_1(W)) \subset \pi_1(E)^{(1)}$) is enough to imply that $\pi_1(W)^{(n)} \subset \pi_1(E)^{(n+1)}$.

Consider the following commutative diagram of maps where $i_*$ is induced by inclusion and both $p_{1_*}$ and $p_{2_*}$ are canonical quotient maps.

\[
\begin{diagram}
\node{\pi_1(W)} \arrow{e,t}{i_*} \arrow{s,l}{p_{1_*}} \arrow{se,t}{h} \node{\pi_1(E)} \arrow{s,r}{p_{2_*}}\\
 \node{H_1(W)=\frac{\pi_1(W)}{\pi_1(W)^{(1)}}} \arrow{e,b}{i_*}
\node{\frac{\pi_1(E)}{\pi_1(E)^{(1)}}=H_1(E)}
\end{diagram}
\]

Showing that $h\equiv  0$ is equivalent to showing that $\pi_1(W) \subset \pi_1(E)^{(1)}$.  Examining this further shows that $h \equiv 0$ if and only if $i_*:H_1(W)\rightarrow H_1(E)$ is the zero map, since our diagram commutes.  Consider the following commutative diagram.

\[
\begin{diagram}
\node{H_1(M_L)} \arrow{e,t}{\cong} \arrow{se,t}{p} \node{H_1(W)} \arrow{s,r}{i_*}\\
\node[2]{H_1(E)}
\end{diagram}
\]

To show that $i_* \equiv 0$ is equivalent to showing that the map $p:H_1(M_L) \rightarrow H_1(E)$ is the zero map.  Consider $[\mu_i] \in H_1(M_L)$ where the $\mu_i$s generate $H_1(M_L)$.  Under the map $p$, $[\mu_i] = [\eta_i] \in H_1(M_{L_{BD}}) \subset H_1(E)$ ($\eta_i \in S^3-L_{BD} \subset M_{L_{BD}}$).  But recall that $[\eta_i]$ lie in a commutator subgroup and thus $[\eta_i] = 0$ in homology, and thus $p$ is the zero map.  This shows that $E$ is an ($n+1$)-solution  and the desired result is achieved.

The case  when $L$ is ($n.5$)-solvable is similar.
\end{proof}

\section{Applications to $\{\mathcal{F}^m_n\}$}

In studying the ($n$)-solvable filtration, we often look at successive quotients of the filtration.  Recently, progress has been made towards understanding the structure of its quotients (see~\cite{Cha4},~\cite{CH2},~\cite{CHL3},~\cite{Ha2}).  Harvey first showed that $\mathcal{F}_n^m/\mathcal{F}^m_{n+1}$ is a nontrivial group that contains an infinitely generated subgroup~\cite{Ha2}.  She showed that this subgroup is generated by boundary links (links with components that bound disjoint Seifert surfaces).   Cochran and Harvey improved this result by showing that $\mathcal{F}^m_n/\mathcal{F}^m_{n.5}$ contains an infinitely generated subgroup~\cite{CH2}.  Again, this subgroup consists entirely of boundary links. Boundary links have vanishing $\bar \mu$-invariants at all lengths.

Using the relationship between Milnor's $\bar \mu$-invariants and ($n$)-solvability, given in Theorem~\ref{theorem:main}, we are able to establish new results that are disjoint from previous work.  Until now, nothing has been known about the ``other half" of the ($n$)-solvable filtration, namely $\mathcal{F}^m_{n.5}/\mathcal{F}^m_{n+1}$.

\begin{theorem}
$\mathcal{F}^m_{n.5}/\mathcal{F}^m_{n+1}$ contains an infinite cyclic subgroup for $m \geq 3*2^{n+1}$.
\label{theorem:nontrivial}
\end{theorem}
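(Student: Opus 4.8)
The plan is to produce an explicit $(n.5)$-solvable link carrying a nonvanishing Milnor invariant in precisely the range that Corollary~\ref{theorem:main} forces to vanish for $(n+1)$-solvable links, and then to promote nontriviality to infinite order.

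First I would fix as base link the Borromean rings $B$, a three-component link. Its pairwise linking numbers all vanish, so $B\in\mathcal{F}^3_{-0.5}$, while its first nonvanishing Milnor invariant is $\bar\mu_B(123)=\pm 1$ at length $3$. Applying the proposition immediately preceding Proposition~\ref{proposition:bingdouble} once gives that $BD(B)$ is $(0.5)$-solvable, and iterating Proposition~\ref{proposition:bingdouble} a further $n$ times shows that $BD^{n+1}(B)$ is $(n.5)$-solvable. Since Bing doubling each component doubles the component count, $BD^{n+1}(B)$ has exactly $3\cdot 2^{n+1}$ components; choosing a string link representative (which exists by the discussion in Section~2) places its concordance class in $\mathcal{F}^{m}_{n.5}$ for $m=3\cdot 2^{n+1}$, and adjoining split unknotted components handles all larger $m$.

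Next I would track the effect of Bing doubling on Milnor's invariants. The key geometric fact is that Bing doubling the $i$-th component replaces its meridian by the commutator $[\mu_{2i-1},\mu_{2i}]$ of the two new meridians; under the Magnus embedding of Section~2 this substitutes each variable $X_i$ appearing in the longitudes by the lowest-order term $X_{2i-1}X_{2i}-X_{2i}X_{2i-1}$, doubling the length of every monomial while preserving its coefficient. Hence after $n+1$ iterations the first nonvanishing invariant of $BD^{n+1}(B)$ sits at length $3\cdot 2^{n+1}$ and equals $\pm 1$. Since
$$3\cdot 2^{n+1} \;<\; 4\cdot 2^{n+1}-1 \;=\; 2^{n+3}-1,$$
Corollary~\ref{theorem:main} shows that $BD^{n+1}(B)$ cannot be $(n+1)$-solvable: were it so, all its $\bar\mu$-invariants of length at most $2^{n+3}-1$ would vanish. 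Thus its class is nontrivial in $\mathcal{F}^m_{n.5}/\mathcal{F}^m_{n+1}$.

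To upgrade this to an infinite cyclic subgroup, I would invoke additivity of the lowest-degree Milnor invariant under the stacking operation on string links: writing $J$ for the chosen string link representative of $BD^{n+1}(B)$, the $k$-fold stack $J^{k}$ satisfies $\bar\mu_{J^{k}}(I)=k\,\bar\mu_{J}(I)=\pm k$ at the length-$3\cdot 2^{n+1}$ index $I$, where this value is a genuine integer because it is the first nonvanishing invariant. Each $J^{k}$ is $(n.5)$-solvable since $\mathcal{F}^m_{n.5}$ is a subgroup, and for $k\neq 0$ the invariant $\pm k$ is nonzero and again lies in the range $|I|\leq 2^{n+3}-1$, so $J^{k}$ is never $(n+1)$-solvable. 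Therefore $[J]$ has infinite order and $\mathbb{Z}\hookrightarrow\mathcal{F}^m_{n.5}/\mathcal{F}^m_{n+1}$. The main obstacle is the middle step: rigorously propagating the Magnus expansion of the longitudes through iterated Bing doubling to confirm both that the length exactly doubles at each stage and that the leading coefficient survives rather than cancelling against lower-order contributions of the commutator expansion. Controlling the Milnor indeterminacy so that length $3\cdot 2^{n+1}$ is genuinely the \emph{first} nonvanishing length — which is what makes the invariant a well-defined integer and hence additive — is the delicate part; $(n)$-solvability already guarantees via Corollary~\ref{theorem:main} that everything of length at most $2^{n+2}-1$ vanishes, and the combinatorial computation must close the remaining gap.
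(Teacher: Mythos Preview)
Your proposal is correct and follows essentially the same route as the paper: iterated Bing doubling of the Borromean rings to land in $\mathcal{F}^m_{n.5}$, a length-$3\cdot 2^{n+1}$ nonvanishing $\bar\mu$-invariant to obstruct $(n+1)$-solvability via Corollary~\ref{theorem:main}, and additivity of the first nonvanishing $\bar\mu$-invariant (which the paper attributes to Orr) for infinite order. The paper handles the Milnor-invariant computation under Bing doubling by citing Chapter~8 of Cochran's book rather than the Magnus-expansion sketch you give, and you are in fact slightly more careful than the paper in invoking the ``$\mathcal{F}^m_{-0.5}\Rightarrow BD(L)\in\mathcal{F}^{2m}_{0.5}$'' proposition for the first doubling step before iterating Proposition~\ref{proposition:bingdouble}.
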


\begin{proof}
Let $BR=$ the Borromean Rings.  It is clear that $BR \in \mathcal{F}^3_{-0.5}$.  A direct calculation from the definition of Milnor's invariants will show that $\bar \mu_{BR}(123)=\pm1$ depending on orientation.  Using Theorem~\ref{theorem:main}, $BR$ is a nontrivial link in $\mathcal{F}^3_{-0.5}/\mathcal{F}^3_0$.  When we apply the Bing double operator on $BR$, denoted $BD(BR)$ (see Figure~\ref{fig:BBBR}), this new link is in $\mathcal{F}^6_{0.5}$ by Proposition~\ref{proposition:bingdouble}.  However, the first nonvanishing $\bar \mu$-invariant is $\bar \mu_{BD(BR)}(I)= \pm 1$ for a certain $I$ with $|I|=6$ (see Chapter 8 in~\cite{C4} for the specific details).  Hence $BD(BR)$ is not (1)-solvable by Theorem~\ref{theorem:main}.  Then $BD(BR)$ is nontrivial in $\mathcal{F}^6_{0.5}/\mathcal{F}^6_1$ since it has a nonvanishing $\bar \mu$-invariant.

We can perform the Bing doubling operation on this new link to form $BD(BD(BR))$, or more simply, $BD_2(BR)$ (see Figure~\ref{fig:BDBDBR}).  Using Proposition~\ref{proposition:bingdouble}, $BD_2(BR)$ is nontrivial in $\mathcal{F}^{12}_{1.5}$.  Looking at its $\bar \mu$-invariants, we will have that $\bar \mu_{BD_2(BR)}(I)=\pm 1$ for a certain $I$ of length 12 and our link cannot be (2)-solvable, again by Theorem~\ref{theorem:main}.  Therefore $BD_2(BR)$ is nontrivial in $\mathcal{F}^{12}_{1.5}/\mathcal{F}^{12}_2$.  We can continue this process to have $BD_{n+1}(BR)$ nontrivial in $\mathcal{F}^m_{n.5}/\mathcal{F}^m_{n+1}$ for $m\geq 3*2^{n+1}$.

We claim that $BD_{n+1}(BR)$ will have infinite order in $\mathcal{F}^m_{n.5}/\mathcal{F}^m_{n+1}$. Orr showed the first nonvanishing $\bar \mu$-invariant is additive ~\cite{O1}.  Consider an arbitrary string link $L$ with the following properties instead of the specific link $BD_{n+1}(BR)$ for the moment.  Suppose that $\bar \mu_{L}(I)=0$ and that $\bar \mu_{L}(J) \not =0$ for $|J|= |I|+1$.  Then
$\bar \mu_{\widehat {LL}}(J)=\bar \mu_{\widehat {L}}(J)+\bar \mu_{\widehat {L}}(J)=2\bar \mu_{\widehat {L}}(J).$
If we were to take the closure of the stack of $n$ copies of $L$, denoted $\widehat {nL}$, we would obtain
$\bar \mu_{\widehat {nL}}(J)=n\bar \mu_{\hat L}.$

 This gives that $L$ generates an infinite cyclic subgroup $\mathbb{Z}$.  In our case, since $BD_{n+1}(BR)$ has a nonzero $\bar \mu$-invariant of length $3*2^{n+1}$, the same reasoning can be used to show that it generates an infinite cyclic subgroup.
\end{proof}
\begin{figure}[h!]
\centering
\subfigure[$BD(BR)$]{\label{fig:BBBR}\includegraphics[height=4.7cm]{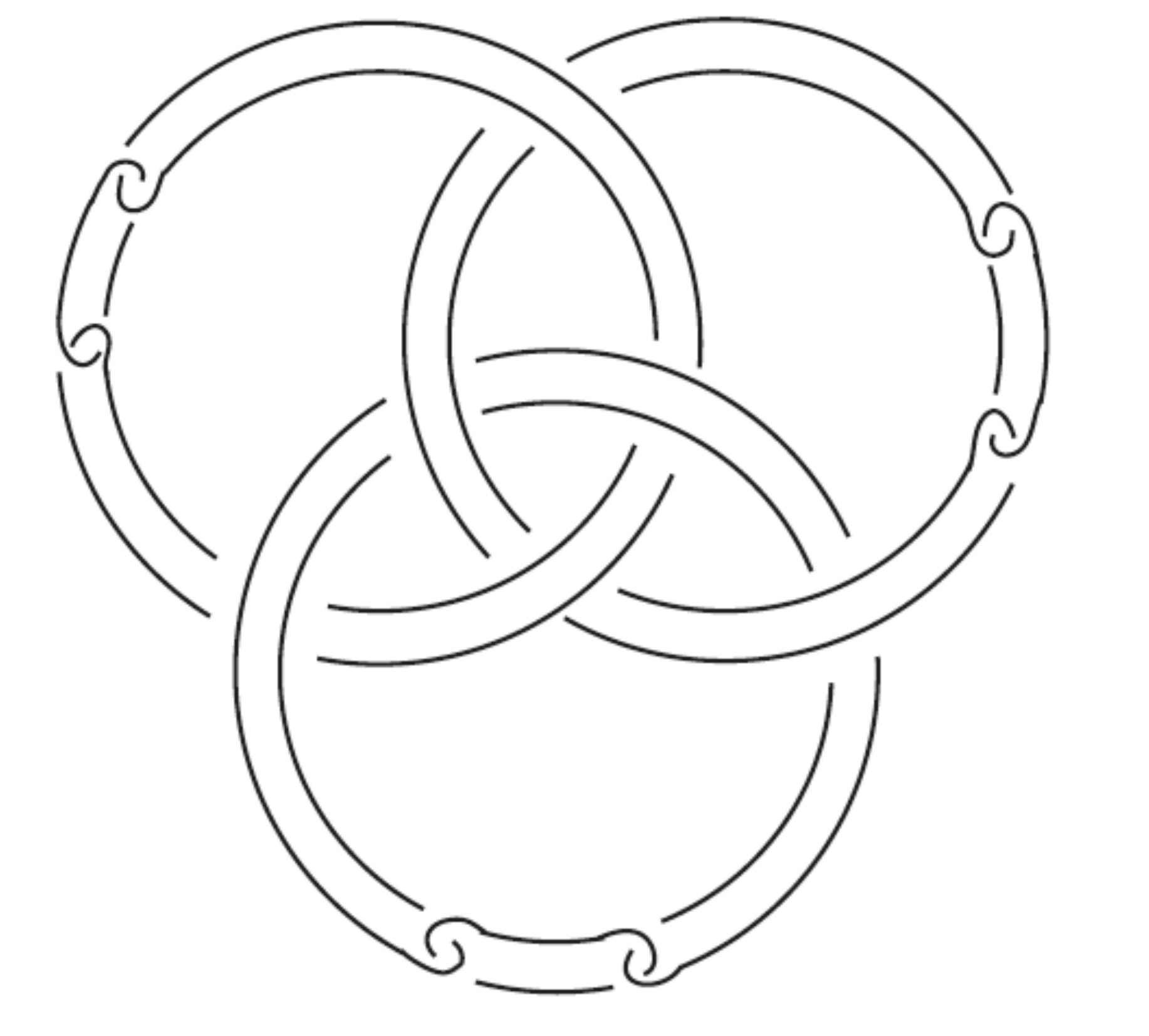}}\qquad
\subfigure[$BD(BD(BR))=BD_2(BR)$]{\label{fig:BDBDBR}\includegraphics[height=5.2cm]{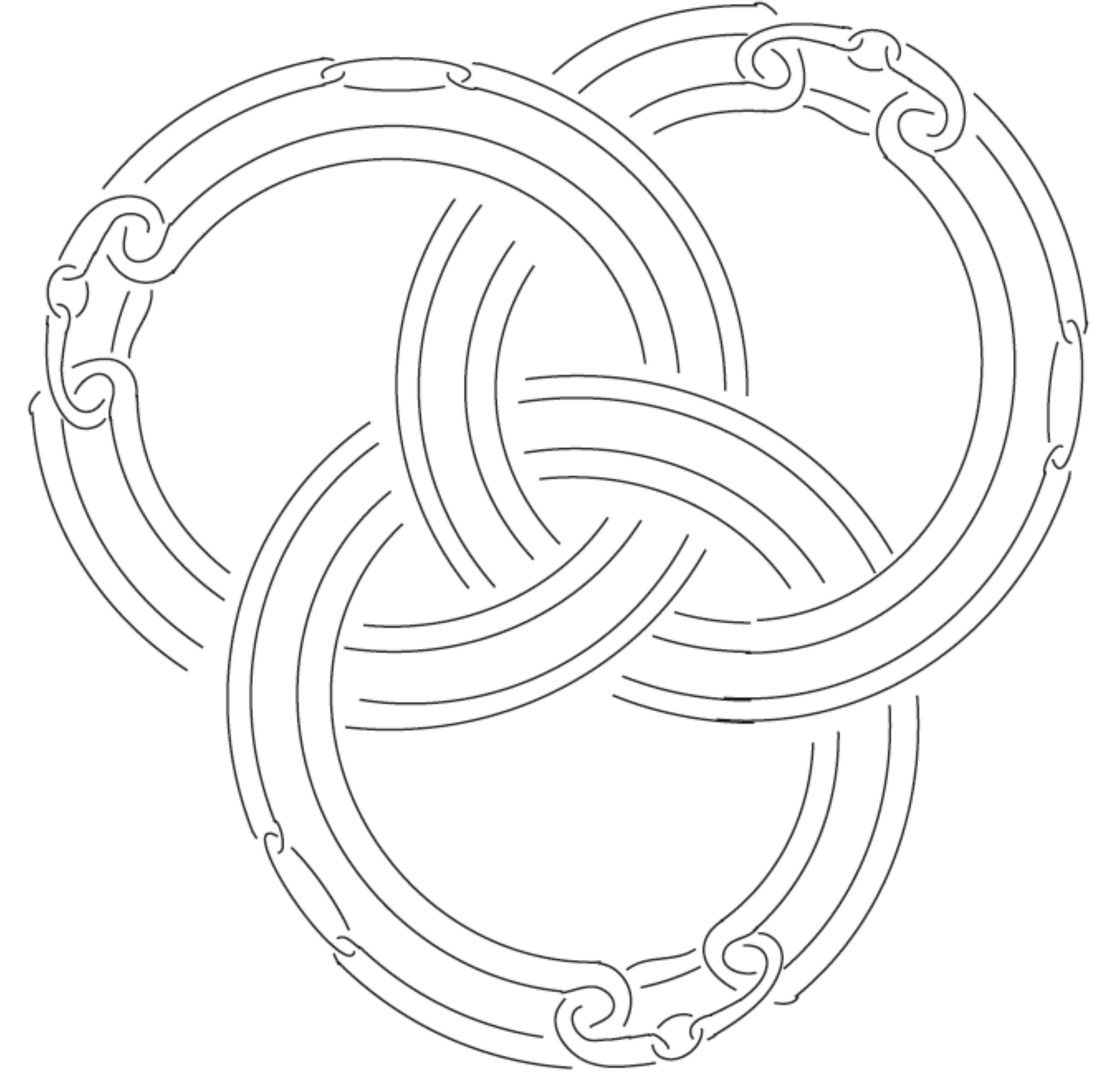}}
\caption[Bing doubles of the Borromean Rings]
{Examples of iterated Bing doubles of the Borromean Rings}
\end{figure}

The example exhibited in the above proof came from iterated Bing doubles of a link with certain nonvanishing $\bar \mu$-invariants.  These iterated Bing doubles will always have a nonzero $\bar \mu$-invariant~\cite{C4}, and the above example is also not concordant to a boundary link.  Hence our results are not concordant to those previously known.

Since the knot concordance group $\mathcal{C}$ is abelian, all successive quotients of the ($n$)-solvable filtration are abelian.  It is known, however, that $\mathcal{C}^m$ is a nonabelian group for $m\geq 2$~\cite{LD}.  We briefly recall some facts known about certain quotient groups of $\{\mathcal{F}^m_n\}$.

The quotient $\mathcal{F}^m_{-0.5}/\mathcal{F}_0^m$ has been classified by Martin and is known to be abelian ~\cite{TM}

We also know that the quotient $\mathcal{C}^m/\mathcal{F}^m_0$ is a nonabelian group for $m\geq 3$.

\begin{exam}
Consider the pure braids in Figure~\ref{fig:A} and~\ref{fig:B}.  We build the commutator $ABA^{-1}B^{-1}$ seen in Figure~\ref{fig:commutator}. The link $\widehat {ABA^{-1}B^{-1}}$ is isotopic to the Borromean Rings which are not (0)-solvable. We conclude that $\mathcal{C}^m/\mathcal{F}^m_0$ is not abelian.

\begin{figure}[h!]
\centering
\subfigure[$A$]{\label{fig:A}\includegraphics[height=2.5cm]{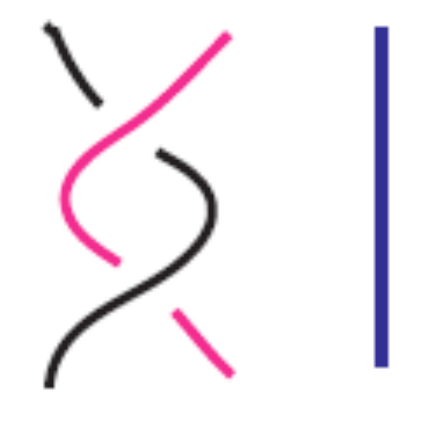}}\qquad \qquad
\subfigure[$B$]{\label{fig:B}\includegraphics[height=2.5cm]{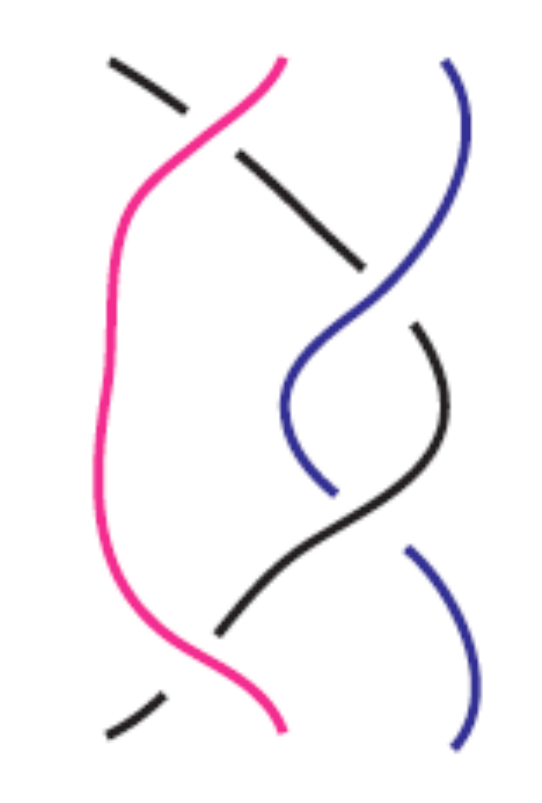}}\\
\subfigure[Pure braid $ABA^{-1}B^{-1}$]{\label{fig:commutator}\includegraphics[height=2cm]{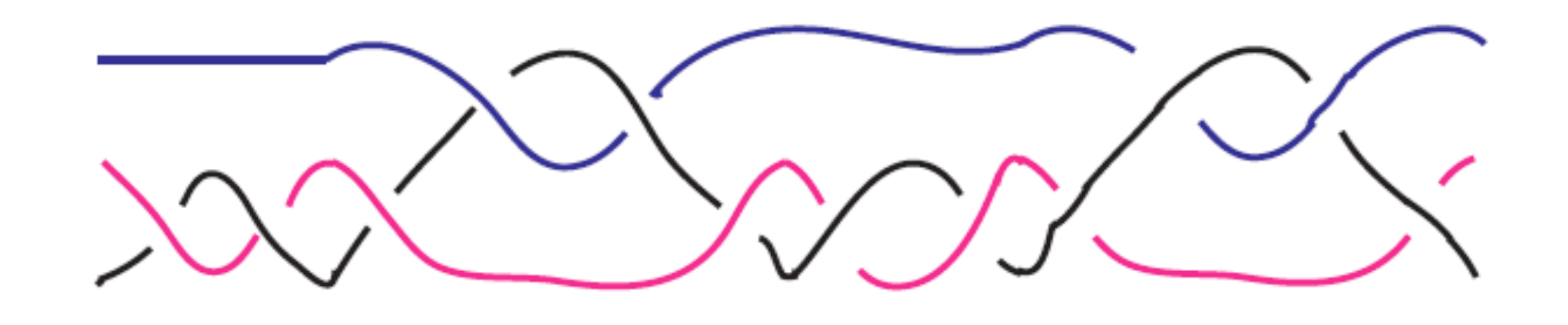}}
\caption[Example of a commutator of pure braids that is not (0)-solvable]
{Example of a commutator of pure braids that is not (0)-solvable}
\end{figure}

\end{exam}

We continue on with our investigation of quotients of $\{\mathcal{F}^m_n\}$.  Again using Theorem~\ref{theorem:main}, we will show that $\mathcal{F}^m_{-0.5}/\mathcal{F}^m_1$ is a nonabelian group.

\begin{theorem}
$\mathcal{F}^m_{-0.5}/\mathcal{F}^m_1$ is a nonabelian group for $m \geq 3$.
\label{theorem:abelian}\end{theorem}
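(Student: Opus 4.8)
The plan is to prove nonabelianness by exhibiting two string links $A,B \in \mathcal{F}^m_{-0.5}$ whose commutator $[A,B]=ABA^{-1}B^{-1}$ fails to be $(1)$-solvable, so that $[A,B]$ represents a nontrivial element of $\mathcal{F}^m_{-0.5}/\mathcal{F}^m_1$ witnessing that the factors do not commute modulo the subgroup. By Corollary~\ref{theorem:main} in the case $n=1$, it suffices to produce $A,B$ for which $\widehat{[A,B]}$ has a nonvanishing Milnor invariant $\bar\mu(I)$ with $|I|\le 2^{3}-1=7$. This mirrors the Borromean commutator used above to show that $\mathcal{C}^m/\mathcal{F}^m_0$ is nonabelian, but now both factors are required to lie in $\mathcal{F}^m_{-0.5}$, i.e. to have all pairwise linking numbers zero.

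First I would record which low-length invariants are forced to vanish on a commutator. Since $A,B\in\mathcal{F}^m_{-0.5}$, all length-$2$ invariants (linking numbers) are zero, and the same holds for $[A,B]$. The length-$3$ invariants are more delicate: on $\mathcal{F}^m_{-0.5}$ the first nonvanishing Milnor invariants are additive (Orr~\cite{O1}), so the triple linking numbers define a homomorphism $\mathcal{F}^m_{-0.5}\to\mathbb{Z}^{N}$ and hence vanish on every commutator. Thus $\bar\mu_{[A,B]}(I)=0$ automatically for $|I|\le 3$, and the obstruction I am after must live at length $4$ or $5$ --- comfortably inside the range $|I|\le 7$ where Corollary~\ref{theorem:main} applies.

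To produce the obstruction I would pass to longitudes and the Magnus expansion. Writing $\ell_i^{S}$ for the $i$-th longitude of a string link $S$ as a word in the meridians $x_1,\dots,x_m$, stacking multiplies longitudes in the relevant nilpotent quotient, so that $\ell_i^{[A,B]}\equiv[\ell_i^{A},\ell_i^{B}]$. If $A$ and $B$ are chosen with zero linking, each $\ell_i$ lies in $F_2$, its Magnus expansion begins in degree $2$, and the leading Lie term of $[\ell_i^{A},\ell_i^{B}]$ sits in degree $4$; that is, it records a length-$5$ Milnor invariant of $[A,B]$. Modeling $A$ and $B$ on Borromean-type string links --- so that the degree-$2$ parts of $\ell_i^{A}$ and $\ell_i^{B}$ are distinct brackets such as $[x_j,x_k]$ and $[x_j,x_\ell]$ --- their commutator is a nonzero element of the degree-$4$ part of the free Lie algebra, giving $\bar\mu_{[A,B]}(I)\ne 0$ for a suitable $I$ with $|I|=5$. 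By Corollary~\ref{theorem:main} this forces $[A,B]\notin\mathcal{F}^m_1$, and three components already suffice to house the generators needed for a nonvanishing degree-$4$ bracket, which accounts for the bound $m\ge 3$.

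The main obstacle is the final nonvanishing claim: one must verify that the chosen length-$5$ invariant does not accidentally cancel, which requires computing $[\ell_i^{A},\ell_i^{B}]$ in $F_4/F_5$ together with a realizability check that $A,B$ can be taken in $\mathcal{F}^m_{-0.5}$ with the prescribed degree-$2$ longitude terms (respecting Milnor's cyclic and shuffle relations). I would handle this by presenting $A$ and $B$ as explicit pure braids with vanishing linking numbers --- paralleling the figures used for the Borromean commutator --- and reading off the relevant length-$5$ invariant directly from the Magnus expansion, so that its nontriviality is established by an honest finite calculation rather than an abstract Lie-algebra argument.
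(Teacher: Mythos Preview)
Your strategy is exactly the paper's: exhibit $A,B\in\mathcal{F}^m_{-0.5}$ whose commutator has a nonvanishing $\bar\mu$-invariant of length $\le 7$ and invoke Corollary~\ref{theorem:main} with $n=1$. The paper carries this out concretely by taking $A=BR$ (the Borromean rings written as a pure braid) and $B=\sigma_1 BR\,\sigma_1^{-1}$, and then citing a computer calculation (Davis's program) which returns $\bar\mu_L(313323)=-1$, a length-$6$ obstruction.

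One caution about your heuristic step. The formula $\ell_i^{[A,B]}\equiv[\ell_i^{A},\ell_i^{B}]$ is not literally correct at the level you need: under stacking, longitudes compose through the Artin automorphism, $\ell_i^{ST}=\ell_i^{S}\cdot\phi_S(\ell_i^{T})$, and when $\ell_j^{A},\ell_j^{B}\in F_2$ the corrections $\phi_A(\ell_i^{B})/\ell_i^{B}$ already land in $F_4$, i.e.\ at the very degree where you are trying to read off a length-$5$ invariant. So the bare Lie bracket $[\ell_i^{A},\ell_i^{B}]$ does not by itself determine the degree-$4$ part of $\ell_i^{[A,B]}$, and your length-$5$ prediction is not justified by that argument alone. (Indeed, for the paper's particular $A,B$ the first surviving invariant turns up at length $6$, not $5$.)

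That said, you explicitly plan to settle the nonvanishing by writing $A,B$ as explicit pure braids and reading the invariant off the Magnus expansion. That is precisely what the paper does, so your plan is sound; just be prepared for the obstruction to appear at length $6$ rather than $5$, and note that the paper's verification ultimately rests on a machine computation rather than a closed-form Lie-algebra identity.
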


In order to prove this theorem, we need to demonstrate that there exists two string links with pairwise linking numbers equal to zero such that when we construct the commutator we get a string link that is not (1)-solvable.

\begin{proof}
The Borromean Rings, $BR$, can be written as a pure braid, specifically, $BR=\sigma_2\sigma_1^{-1}\sigma_2\sigma_1^{-1}\sigma_2\sigma_1^{-1}$ (see Figure~\ref{fig:BRbraid}).  Consider the pure braid $\sigma_1BR\sigma_1^{-1}$, the Borromean Rings conjugated by $\sigma_1$ (see Figure~\ref{fig:combr}).  We look at the commutator $L=(BR)(\sigma_1BR\sigma_1^{-1})(BR)^{-1}(\sigma_1BR\sigma_1^{-1})^{-1}$.  Notice that $L$ is also a pure braid.

For braids, the canonical meridians, $m_i$, will freely generate the fundamental group and any other meridian of $L_i$ (the $i^{th}$ string of $L$) in $\pi_1$ will be a conjugate of $m_i$.  This allows us to write $l_i$ of $\hat L$ as a product of the $m_i$'s using an algorithmic procedure.
\begin{figure}[h!]
\centering
\subfigure[$BR$ as a pure braid]{\label{fig:BRbraid}\includegraphics[height=2cm]{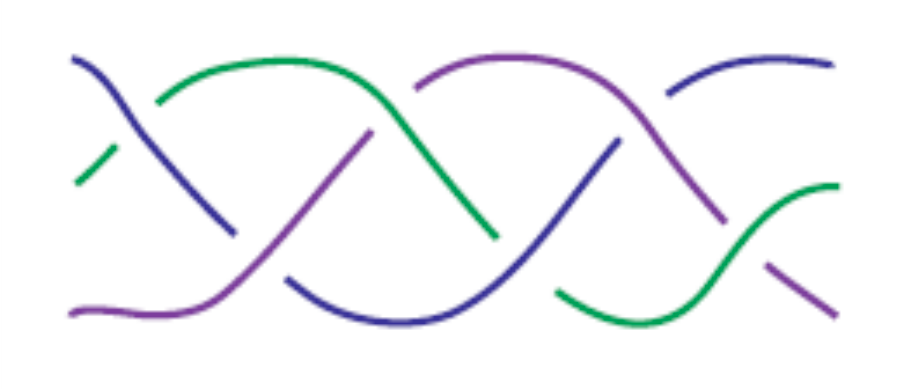}}\qquad \qquad \qquad
\subfigure[$\sigma_1BR\sigma_1^{-1}$]{\label{fig:combr}\includegraphics[height=2cm]{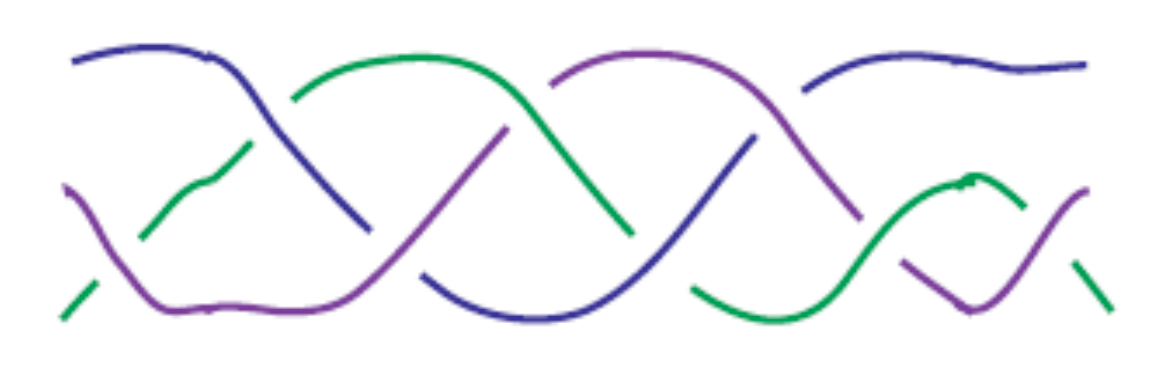}}
\caption[Borromean Rings as a pure braid and a conjugate of them]
{Borromean Rings as a pure braid and a conjugate of them}
\end{figure}
Using this idea, Davis designed a computer program to compute this invariants for braids~\cite{Dweb}. We found that $\bar \mu_L(313323)=-1$.  By Theorem~\ref{theorem:main}, $L$ is not (1)-solvable.  Therefore $\mathcal{F}^m_{-0.5}/\mathcal{F}^m_1$ is a nonabelian group.
\end{proof}

\section{The Grope Filtration and The ($n$)-Solvable Filtration}

In addition to defining the ($n$)-solvable filtration, Cochran, Orr, and Teichner~\cite{COT} also defined the Grope filtration, $\{\mathcal{G}_n^m\}$ of the (string) link concordance group,
$$\{0\}\subset \cdots \subset \mathcal{G}^m_{n+1} \subset \mathcal{G}^m_{n.5} \subset \mathcal{G}^m_n \subset \cdots \subset \mathcal{G}^m_{0.5} \subset \mathcal{G}^m_{0}\subset \mathcal{C}^m.$$

\begin{definition} A \textbf{grope} is a special pair (2-complex, base circle) which has a height $n \in \frac{1}{2}\mathbb{N}$ assigned to it.  A grope of height 1 is precisely a compact, oriented surface $\Sigma$ with a single boundary component, which is the base circle (see Figure~\ref{fig:gropes}).
\end{definition}

A grope of height $n+1$ can be defined recursively by the following construction.  Let $\{\alpha_i, \beta_i: i=1, \dots, 2g\}$ where $g$ is the genus of $\Sigma$, be a symplectic basis of curves for $H_1(\Sigma)$, where $\Sigma$ is a height one grope. The surface $\Sigma$ is also known as the first stage grope.  Then a grope of height $n+1$ is formed by attaching gropes of height $n$ to each $\alpha_i$ and $\beta_i$ along the base circles (see Figure~\ref{fig:gropes}).  A grope of height $1.5$ is a surface with surfaces attached to `half' of the basis curves.  A grope of height $n+1.5$ is obtained by gluing gropes of height $n$ to the $\alpha_i$ and gropes of height $n+1$ to the $\beta_i$.

\begin{figure}[h!]
  \centering
  \includegraphics[height=3cm]{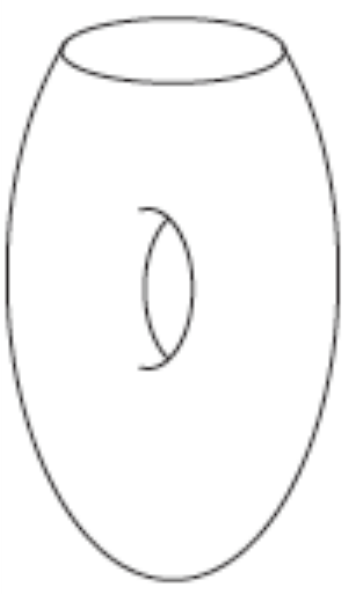} \qquad \includegraphics[height=3cm]{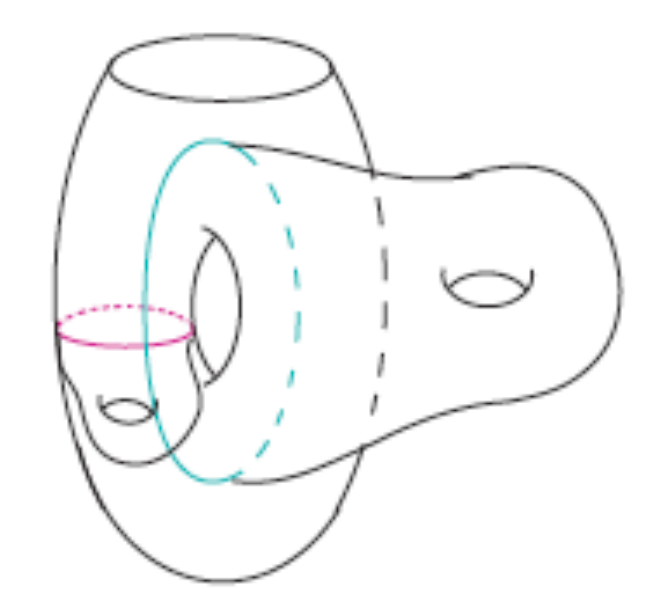}
  \caption[Examples of Gropes]%
  {A height 1 and height 2 grope}
  \label{fig:gropes}
\end{figure}

Given a 4-manifold, $W$, with boundary $S^3$ and a framed circle $\gamma \subset S^3$, we say that $\gamma$ bounds a \textbf{Grope} in $W$ if $\gamma$ extends to a smooth embedding of a grope with its untwisting framing (parallel push offs of Gropes can be taken in $W$).

We denoted $\mathcal{G}^m_n$ to be the subset of $\mathcal{C}^m$ defined by the following.  A string link $L$ is in $\mathcal{G}^m_n$ if the components of $\hat L$ bound disjoint Gropes of height $n$ in $D^4$.  It can be shown that these subsets are actually normal subgroups of $\mathcal{C}^m$.  Harvey showed that this filtration is nontrivial by looking at the filtration of boundary string links~\cite{CH2}.

There is also a notion of Grope concordance.  To define this, the following definition is needed.
\begin{definition} An \textbf{annular grope} of height $n$ is a grope of height $n$ that has an extra boundary component on its first stage.
\end{definition}

 The two boundary components of an annular grope are said to cobound an annular grope.  Two links, $L_0$ and $L_1$, are \emph{height $n$ Grope concordant} if their components cobound disjoint height $n$ annular Gropes, $G_i$, in $S^3 \times [0,1]$ such that $G_i \cap (S^3\times \{j\})=$ the $i^{th}$ component of $L_j$ where $j=0,1$.

 Thus far, two filtrations of the string link concordance group $\mathcal{C}^m$ have been defined.  The ($n$)-solvable filtration is an algebraic approximation while the Grope filtration is a geometric approximation to a link being slice.  It is a natural question to ask whether these two filtrations are related.  In order to answer this question, we need to analyze the relationship between a link bounding disjoint gropes and the link's Milnor's $\bar \mu$-invariants.

\begin{definition}
Let $L=L_1 \cup L_2 \cup \cdots \cup L_m$ and $L'=L'_1 \cup L'_2 \cup \cdots \cup L'_m$ be ordered, oriented links in $S^3$.  We say that $L$ is \textbf{$k$-cobordant} to $L'$, where $k\in \mathbb{Z^+}$, if there are disjoint, smoothly embedded compact, connected, oriented surfaces $V_1, V_2, \dots, V_m$ in $S^3 \times [0,1]$ with $\partial V_i = \partial_0V_i \cup \partial_1V_i$ such that for all $i=1, \dots, m$, we have
\begin{itemize}
\item[i.] $V_i \cap (S^3 \times \{0\}) = \partial_0 V_i=L_i$ and $V_i \cap (S^3 \times \{1\}) = \partial_1 V_i=L'_i$;
\item[ii.] there is a tubular neighborhood $V_i \times D^2$ of $V_i$ in $S^3\times[0,1]$ which extends the ``longitudinal" ones of $\partial V_i=L_i \cup L'_i$ in $S^3 \times \{0\}$ and $S^3 \times \{1\}$ resp such that the image of the homomorphism$$ \pi_1(V_i) \rightarrow \pi_1(V_i \times \partial D^2) \rightarrow \pi_1(S^3 \times [0,1] - V)=G$$ lies in the $k$th term of the lower central series of $G$, $G_k$.
\end{itemize}
\end{definition}

A link that is $k$-cobordant to a slice link is called \textbf{null $k$-cobordant}.

The concept of $k$-cobordism is related to the grope filtration as seen in the following proposition.
\begin{proposition}
If $L \in \mathcal{G}_{n}^m$, then it is $2^{n-1}$-cobordant to a slice link.
\end{proposition}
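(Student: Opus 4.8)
The plan is to convert the disjoint height $n$ Gropes that $\hat L$ bounds into an explicit null $2^{n-1}$-cobordism whose cobordism surfaces are the bottom (first) stages of the Gropes, reading off the lower central series condition in the definition of $k$-cobordance directly from the surviving higher stages. Throughout I write $G$ for the fundamental group of the relevant complement, and I will lean on the index inequality $[G_r,G_s]\subseteq G_{r+s}$ recorded in Section~2 to drive the bookkeeping.

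First I would isolate the key index computation as a lemma: if a framed circle $\gamma$ bounds a grope of height $h$ whose interior lies in a space $X$, then $[\gamma]\in \pi_1(X)_{2^{h}}$. This is an induction on $h$. For $h=1$ the grope is a surface $\Sigma$ with symplectic basis $\{\alpha_i,\beta_i\}$ and $\gamma=\prod_i[\alpha_i,\beta_i]\in \pi_1(X)_2$. For the inductive step, in a height $h+1$ grope each $\alpha_i$ and $\beta_i$ bounds a height $h$ grope, so $\alpha_i,\beta_i\in \pi_1(X)_{2^{h}}$ by hypothesis; then $\gamma=\prod_i[\alpha_i,\beta_i]\in[\pi_1(X)_{2^{h}},\pi_1(X)_{2^{h}}]\subseteq \pi_1(X)_{2^{h+1}}$, applying $[G_r,G_s]\subseteq G_{r+s}$ with $r=s=2^{h}$.

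Next I would build the cobordism. Let $G_1,\dots,G_m$ be the disjoint height $n$ Gropes bounding $\hat L$ in $D^4$, with first stage surfaces $\Sigma_1,\dots,\Sigma_m$ and symplectic basis curves $c_{i,1},\dots,c_{i,2g_i}$ on $\Sigma_i$, each of which bounds a disjoint height $(n-1)$ Grope (the surviving stages). I would push the Gropes into a collar $S^3\times[0,1]$ of $\partial D^4$ so that $L_i\subset S^3\times\{0\}$, and take $V_i$ to be $\Sigma_i$ with a small open disk removed, arranged so that the new boundary curve $U_i$ lies in $S^3\times\{1\}$ and the $U_i$ form an unlink, hence a slice link. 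The disjoint surfaces $\{V_i\}$ then give a cobordism from $L$ to $\{U_i\}$. The group $\pi_1(V_i)$ is free on the $c_{i,j}$ together with the puncture loop; since each $c_{i,j}$ bounds a height $(n-1)$ Grope whose interior sits in the complement $S^3\times[0,1]-\bigcup_k V_k$, the lemma forces the image of $c_{i,j}$ in $G=\pi_1(S^3\times[0,1]-\bigcup_k V_k)$ to lie in $G_{2^{n-1}}$. The puncture loop is expressible through the $c_{i,j}$ and the boundary curves, the latter being a product of commutators and so lying even deeper, so the entire image of $\pi_1(V_i)$ lands in $G_{2^{n-1}}$. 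This is precisely the condition that $L$ is $2^{n-1}$-cobordant to the slice link $\{U_i\}$.

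The main obstacle is the geometric realization rather than the index arithmetic. I must arrange the isotopy into the collar so that the first stages become the embedded cobordism surfaces $V_i$ while the higher stages, with their untwisting framings, are carried disjointly into the complement $S^3\times[0,1]-\bigcup_k V_k$; only then does ``bounds a height $(n-1)$ Grope'' translate into membership in $G_{2^{n-1}}$ for the complement group $G$ that appears in the definition of $k$-cobordance. Care is likewise needed for condition (ii) of that definition: the tubular neighbourhood of $V_i$ must extend the longitudinal framings of $L_i$ and $U_i$, which is exactly where the untwisting (parallel push-off) framing of the Grope enters, and I must confirm that the puncture loop does not escape $G_{2^{n-1}}$. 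Once these points are verified the proposition follows, and, combined with Lin's theorem that $k$-cobordant links share $\bar\mu$-invariants through length $2k$, it yields $\bar\mu_L(I)=0$ for $|I|\le 2^{n}$ as claimed in Corollary~\ref{cor:grope}.
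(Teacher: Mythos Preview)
Your proposal is correct and follows essentially the same route as the paper: take the first-stage surfaces of the Gropes as the cobordism surfaces $V_i$, and use the remaining height $(n-1)$ Gropes attached to the symplectic basis curves to force those curves into $G_{2^{n-1}}$. The only cosmetic difference is that the paper quotes the derived-series fact ``bounding a height $h$ grope puts $[\ell]$ in $\pi_1(X)^{(h)}$'' and then invokes $G^{(n-1)}\subset G_{2^{n-1}}$, whereas you prove the lower-central-series statement directly by induction; your explicit discussion of the collar push, the framing, and the extra boundary generator actually goes a bit beyond what the paper writes out.
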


\begin{proof}
Suppose $L \in \mathcal{G}_{n+2}^m$.  Then the components of $L$, say $\ell_i$, bound disjoint Gropes of height $n$ in $D^4 \cong S^3 \times [0,1]$.  Moreover, the $\ell_i$s extend to smooth embeddings of gropes with their untwisting framing.  Also, $L$ is height $n$ Grope concordant to a slice link $L'$.  Let $V_i$ be the first stage Grope bounded by $\ell_i$ and $\ell_i'$ (ie. the annular Grope in the concordance). Let $V=\coprod_{i=1}^mV_i$.

Now consider the homomorphism
$$ \pi_1(V_i) \rightarrow \pi_1(V_i \times \partial D^2) \rightarrow \pi_1(S^3 \times [0,1] - V)=G$$
that is induced by pushing $V_i$ off itself in the normal direction.  Let $\{ \alpha_i, \beta_i\}$ be a sympletic basis for $V_i$ (see Figure~\ref{fig:gropebasis}).  The parallel push-offs of Gropes can be taken in $S^3 \times [0,1]$ and thus are now in $S^3 \times [0,1] - V$.  By the construction of the Gropes, each of the $\alpha_i$s and $\beta_i$s bound Gropes of height $n-1$ in the exterior of $V$.  Thus $$[\alpha_i],\text{ }[\beta_i] \in G^{(n-1)} \subset G_{2^{n-1}}$$ by the fact that if a curve $\ell$ bounds a (map of a) grope of height $n$ in a space $X$, then $[\ell] \in \pi_1(X)^{(n)}$.
This concludes the proof.
\end{proof}

\begin{figure}[h!]
\centering
\includegraphics[height=4.5cm]{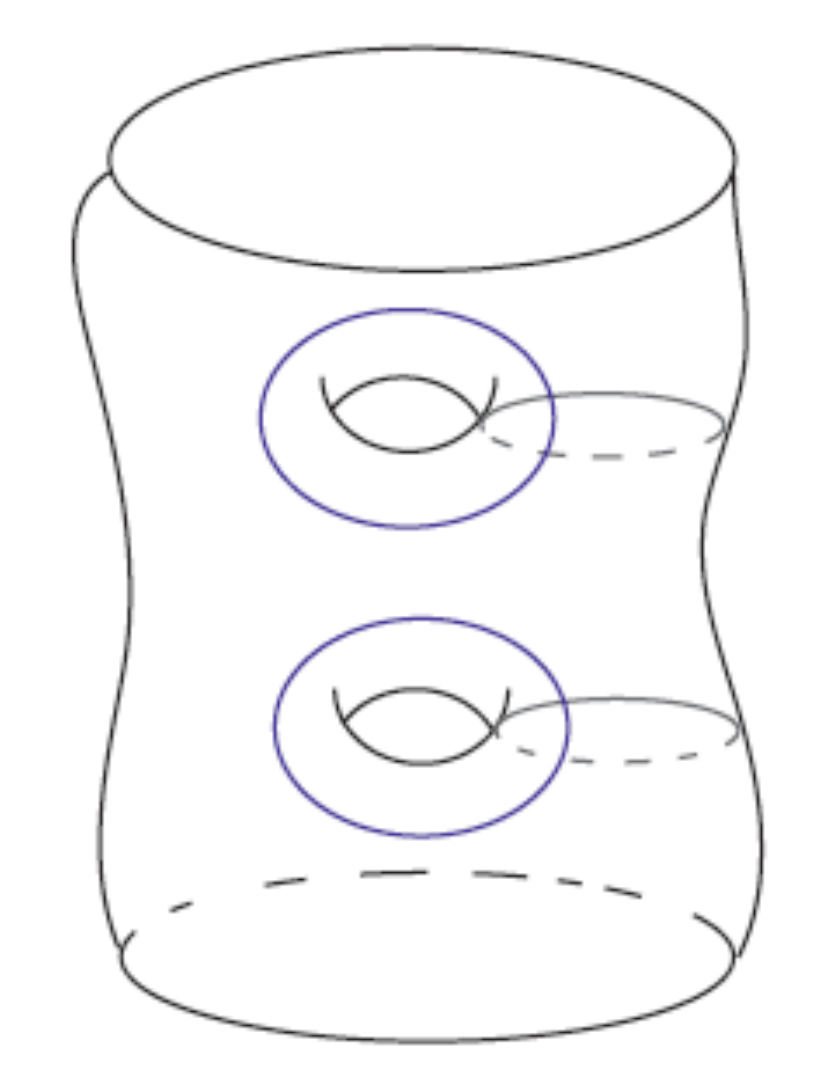}
\put(-78,75){$\beta_1$}
\put(-79,43){$\beta_2$}
\put(-5,73){$\alpha_1$}
\put(-6,40){$\alpha_2$}
\put(-95,110){$\ell_i$}
\put(-80,-3){$\ell'_i$}
\put(5,115){$V_i$}
\caption[First stage grope with basis]
{The first stage grope, $V_i$ with symplectic basis $\{\alpha_i, \beta_i\}_{i=1,2}.$}
\label{fig:gropebasis}
\end{figure}

The following corollary of Lin~\cite{Lin} relates Milnor's invariants with $k$-cobordant links.

\begin{corollary}[Lin]
If $L$ and $L'$ are $k$-cobordant, then Milnor's $\bar \mu$-invariants of $L$ and $L'$ with lengths less than or equal to $2k$ are the same.  In particular, if $L$ is null $k$-cobordant, then $\bar \mu_L(I)=0$ for $|I| \leq 2k$.
\end{corollary}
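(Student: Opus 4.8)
The plan is to read Milnor's invariants off the nilpotent quotients of the link groups and to compare those quotients through the cobordism. The second sentence (``in particular'') is immediate once the first is known: a slice link has $\bar\mu(I)=0$ for all $I$, since its longitudes bound disks and hence die in every nilpotent quotient of the free group; so if $L$ is $k$-cobordant to a slice link $L'$, the invariants of length $\le 2k$ of $L$ and $L'$ agree, and those of $L'$ are zero. Thus I would spend all the effort on the comparison statement for two $k$-cobordant links $L,L'$.

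The reduction to nilpotent quotients goes as follows. By Milnor's theorem (Theorem~\ref{theorem:milnor}) the vanishing of $\bar\mu$ up to length $r-1$ is equivalent to an isomorphism $F/F_r\cong G/G_r$, and $\bar\mu_L$ of length $r$ is the coefficient of a degree-$(r-1)$ monomial in the Magnus expansion of the longitude $l_i$. Such a coefficient depends only on $l_i$ modulo $F_r$, because $F_r$ maps into Magnus degree $\ge r$. Hence it suffices to build an isomorphism between the appropriate nilpotent quotients of $G=\pi_1(S^3-L)$ and $G'=\pi_1(S^3-L')$ that carries meridians to meridians and sends each longitude $l_i$ to $l_i'$ modulo the $2k$-th term of the lower central series. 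I would set $C=(S^3\times[0,1])-N(V)$, with $V=\coprod_i V_i$ the cobordism surfaces, and let $\eta_0,\eta_1$ denote the inclusions of the two link exteriors into $C$. Both induce isomorphisms on $H_1$ (all groups are $\cong\mathbb{Z}^m$ generated by meridians, and a meridian $\mu_i$ of $L_i$ is isotopic in $C$ to a meridian of $V_i$, hence to $\mu_i'$).

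The geometric heart of the argument is the $k$-cobordism hypothesis: the symplectic basis curves $\{\alpha_{ij},\beta_{ij}\}$ of each $V_i$ have images lying in $(\pi_1 C)_k$. This yields two things. First, the difference of longitudes $l_i(l_i')^{-1}$ is the product of commutators $\prod_j[\alpha_{ij},\beta_{ij}]$ arising from $\partial V_i$, so it lies in $[(\pi_1 C)_k,(\pi_1 C)_k]\subseteq(\pi_1 C)_{2k}$, using the relation $[G_r,G_s]\subseteq G_{r+s}$ recorded in the Preliminaries. Second, the surface classes spanning the ``new'' part of $H_2(C)$ lift to the $k$-th lower-central cover of $C$, so by the Cochran--Harvey vanishing lemma invoked in the proof of Theorem~\ref{theorem:newmain} they sit in the Dwyer filtration $\Phi(\pi_1 C)$ at the relevant level. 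Feeding this into Dwyer's theorem (Theorem~\ref{theorem:dwyer}) exactly as in the proof of Theorem~\ref{theorem:newmain} shows that $\eta_0$ and $\eta_1$ induce isomorphisms $\pi_1(X_0)/(\cdot)_N\cong\pi_1(C)/(\cdot)_N\cong\pi_1(X_1)/(\cdot)_N$, compatibly with meridians, up to the level $N$ that controls length-$2k$ invariants.

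Composing these isomorphisms gives an identification of the nilpotent quotients of $G$ and $G'$ taking meridians to meridians, and by the commutator computation it takes $l_i$ to $l_i'$ modulo the $2k$-th term; hence the degree-$\le 2k-1$ Magnus coefficients coincide and $\bar\mu_L(I)=\bar\mu_{L'}(I)$ for $|I|\le 2k$. I expect the main obstacle to be the lower-central-series bookkeeping in the Dwyer step: one must check that the surface classes sit deep enough in the Dwyer filtration for the Stallings--Dwyer isomorphism to reach precisely the level that detects length-$2k$ invariants, and that the commutator pairing supplies the extra factor of two, via $[(\pi_1 C)_k,(\pi_1 C)_k]\subseteq(\pi_1 C)_{2k}$, that makes $2k$ rather than $k$ the correct bound. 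Matching the longitudes at exactly this level, together with verifying that the framing condition in the definition of $k$-cobordism forces the $0$-framed longitudes (and not some other pushoff) to be the curves being identified, are the remaining points that require care.
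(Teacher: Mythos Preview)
The paper does not give its own proof of this statement: it is quoted as a result of Lin and simply cited, then used as a black box to deduce Corollary~\ref{cor:grope}. So there is no in-paper argument to compare your proposal against.

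That said, your outline is essentially the standard route to Lin's theorem and is sound in its architecture. The two load-bearing observations are exactly the ones you isolate: writing the boundary relation on each $V_i$ as $l_i(l_i')^{-1}=\prod_j[\alpha_{ij},\beta_{ij}]$ with $\alpha_{ij},\beta_{ij}\in(\pi_1C)_k$, which forces the longitude discrepancy into $(\pi_1C)_{2k}$ via $[G_k,G_k]\subseteq G_{2k}$; and a Stallings/Dwyer comparison of the nilpotent quotients of the two link exteriors through $C$. Your own caveat about the Dwyer step is well placed. The phrase ``the surface classes spanning the new part of $H_2(C)$ lift to the $k$-th lower-central cover'' needs to be made precise: the cobordism surfaces $V_i$ are exactly what has been deleted, and it is their framed pushoffs (which have boundary) together with the linking tori that organize $H_2(C)$, so one has to be explicit about which closed classes one is feeding into $\Phi_{2k-1}$ and why the $k$-cobordism hypothesis places them there. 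Once that bookkeeping is done and the framing condition is invoked to guarantee that the $0$-framed longitudes are the curves being matched, your argument goes through and recovers Lin's bound $|I|\le 2k$.
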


\begin{corollary}A link $L$ with components that bound disjoint Gropes of height $n$ has $\bar \mu_L(I) = 0$ for $|I|\leq 2^{n}$.
\label{cor:grope}\end{corollary}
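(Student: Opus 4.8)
The plan is to chain together the two results just established: the Proposition relating the Grope filtration to $k$-cobordism, and Lin's Corollary relating $k$-cobordism to the vanishing of $\bar\mu$-invariants. First I would observe that the hypothesis of the statement is exactly the defining condition for membership in the Grope filtration. Saying that the components of $L$ bound disjoint Gropes of height $n$ in $D^4$ is precisely what it means for $L$ to lie in $\mathcal{G}^m_n$, so the hypothesis gives $L \in \mathcal{G}^m_n$ with no additional argument.

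Next I would invoke the preceding Proposition, which asserts that every $L \in \mathcal{G}^m_n$ is $2^{n-1}$-cobordant to a slice link; equivalently, $L$ is null $2^{n-1}$-cobordant. This is the only place where any geometry enters, and it has already been carried out using the symplectic basis of the first-stage annular Grope together with the fact that a curve bounding a grope of height $n$ in a space $X$ represents an element of $\pi_1(X)^{(n)} \subset \pi_1(X)_{2^n}$. I would then apply Lin's Corollary (stated above) with $k = 2^{n-1}$: since a slice link has all $\bar\mu$-invariants zero and $k$-cobordism preserves $\bar\mu$-invariants in lengths up to $2k$, we conclude that $\bar\mu_L(I) = 0$ for all $|I| \leq 2k$.

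The single point requiring care is the exponent bookkeeping, and this is where I expect the proof to turn rather than on any substantive difficulty. The length bound from Lin's Corollary is $2k$, and substituting $k = 2^{n-1}$ gives $2 \cdot 2^{n-1} = 2^n$, which matches the claimed range $|I| \leq 2^n$ exactly. There is no genuine obstacle beyond this arithmetic; the mathematical content lives entirely in the Proposition and in Lin's Corollary, both of which are available, so the corollary follows by composing the two implications $L \in \mathcal{G}^m_n \Rightarrow$ null $2^{n-1}$-cobordant $\Rightarrow \bar\mu_L(I) = 0$ for $|I| \leq 2^n$.
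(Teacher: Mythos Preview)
Your proposal is correct and matches the paper's approach exactly: the paper's proof simply states that the result is immediate from the preceding Proposition (membership in $\mathcal{G}^m_n$ implies null $2^{n-1}$-cobordant) and Lin's Corollary ($k$-cobordism preserves $\bar\mu$-invariants up to length $2k$), which is precisely the chain of implications you have spelled out, including the exponent computation $2\cdot 2^{n-1}=2^n$.
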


\begin{proof}
The proof of this is immediate from the previous two results.
\end{proof}

Cochran, Orr and Teichner~\cite{COT} showed that these two filtrations are related.
\begin{theorem}[Cochran-Orr-Teichner]
If a link $L$ bounds a grope of height $n+2$ in $D^4$, then $L$ is $(n)$-solvable, i.e. $\mathcal{G}^m_{n+2} \subseteq \mathcal{F}^m_n$ for all $m$ and $n$.
\end{theorem}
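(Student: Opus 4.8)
The plan is to promote the grope that $L$ bounds in $D^4$ to an honest $(n)$-solution $W$ for $M_L$, reading a Lagrangian and its duals off the successive stages of the grope and controlling fundamental groups by exactly the principle invoked in the proof of Corollary~\ref{cor:grope}: if a circle bounds a grope of height $h$ in a space $X$, then it represents an element of $\pi_1(X)^{(h)}$. So the whole argument is a translation: grope height becomes derived-series depth, and adjacent grope stages become a symplectic (hyperbolic) pairing in $H_2(W)$.

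First I would fix the $4$-manifold. Let $G=\sqcup_i G_i\subset D^4$ be the disjoint height $(n+2)$ gropes with $\partial G_i=\ell_i$, the components of $L$, pushed into $\operatorname{int}(D^4)$. By an appropriate surgery/thickening of $G$ inside $D^4$ that incorporates the $0$-framing on $L$, I would build a compact oriented $W$ with $\partial W=M_L$ for which the meridians of $L$ generate $H_1(W)$ and the inclusion induces an isomorphism $H_1(M_L)\to H_1(W)\cong\mathbb{Z}^m$; this is condition (i). The real content is then that $H_2(W)$ is carried by \emph{closed} surfaces assembled from adjacent stages of the gropes, pairing hyperbolically.

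The pairing and the depth count run schematically as follows. The first-stage surface $\Sigma_i$ of $G_i$ closes up in $W$ to a surface $\widehat\Sigma_i$ whose $\pi_1$ is generated by a symplectic basis $\{\alpha_{ij},\beta_{ij}\}$; these curves bound the height $(n+1)$ gropes sitting on them inside $G_i$, so $\pi_1(\widehat\Sigma_i)\subset\pi_1(W)^{(n+1)}\subset\pi_1(W)^{(n)}$. These closed first-stage surfaces furnish the Lagrangian $\mathcal L\subset H_2(W)$. The \emph{next} stage of each grope, pushed off itself with its untwisting framing and capped, supplies surfaces dual to the $\widehat\Sigma_i$, meeting them in the single transverse point forced by the symplectic pairing; the basis curves of these stage-two surfaces in turn bound height $n$ gropes, so the duals $\mathcal D$ have $\pi_1\subset\pi_1(W)^{(n)}$ as well. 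This is exactly where the hypothesis height $n+2$ (rather than $n+1$) is spent: it is precisely one stage more than the Lagrangian needs, so that the duals, which live one stage deeper, still reach derived depth $n$. With both $\mathcal L$ and $\mathcal D$ mapping into $\pi_1(W)^{(n)}$ we get condition (iii), and the intersection data $L_i\cdot D_i=1$, $L_i\cdot L_j=D_i\cdot D_j=L_i\cdot D_j=0$ for $i\neq j$ gives condition (ii).

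I expect the crux to be neither the depth count nor condition (i), but the verification that the branching stage structure of the grope assembles into an actual \emph{basis} of $H_2(W)$ realized by pairwise disjoint embedded surfaces with trivial normal bundles and with exactly the hyperbolic intersection pattern of condition (ii). Concretely one must check that the thickening of $G$ introduces no spurious second homology, that the $0$-framings on $L$ together with the untwisting framings on the grope stages force every self-intersection to vanish, and that the geometric duals produced by adjacent stages are genuinely algebraically dual (so that $\mathcal L\oplus\mathcal D$ is all of $H_2(W)$ and the form is hyperbolic). This amounts to identifying $H_2(W)$ with the symplectic homology carried by the grope and matching framings stage by stage. Once this identification is in place, conditions (i)--(iii) hold as above, $W$ is an $(n)$-solution for $L$, and therefore $\mathcal G^m_{n+2}\subseteq\mathcal F^m_n$.
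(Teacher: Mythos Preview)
The paper does not prove this theorem; it is simply quoted from \cite{COT} as background, so there is no proof in the paper to compare your attempt against.

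Your overall strategy is the right one, but your identification of the hyperbolic basis is off. You take the $m$ capped first-stage surfaces $\widehat\Sigma_i$ as the Lagrangian and the second-stage surfaces as their duals. This cannot work as stated: there are $\sum_i 2g_i$ second-stage surfaces (with $g_i=\mathrm{genus}\,\Sigma_i$), not $m$, so the two collections do not pair off one-to-one; and a second-stage surface is attached along a curve lying \emph{in} $\Sigma_i$, not along anything transverse to $\widehat\Sigma_i$, so no single transverse intersection point between them arises. The phrase ``forced by the symplectic pairing'' is the tell: that pairing is between $\alpha_{ij}$ and $\beta_{ij}$ on the \emph{same} first-stage surface, not between a first-stage surface and a second-stage surface. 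In the Cochran--Orr--Teichner argument the hyperbolic pairs $(L_j,D_j)$ are indexed by the symplectic pairs $(\alpha_j,\beta_j)$ of the first-stage surfaces: $L_j$ and $D_j$ are both built from \emph{second}-stage surfaces (those bounding $\alpha_j$ and $\beta_j$ respectively), suitably capped off in $W$, and the single intersection $L_j\cdot D_j=1$ is inherited from $\alpha_j\cdot\beta_j=1$ in $\Sigma_i$. With that correction your depth count goes through verbatim---the basis curves of each second-stage surface bound height-$n$ gropes, so $\pi_1(L_j),\pi_1(D_j)\subset\pi_1(W)^{(n)}$---and one sees exactly why height $n+2$ is the threshold.
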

The natural question is whether or not the inclusion goes in the other direction.  Recall from Theorem~\ref{theorem:main} that an ($n$)-solvable string link has vanishing $\bar \mu$-invariants for lengths less than or equal to $2^{n+2}-1$, whereas in Corollary~\ref{cor:grope} a string link in $\mathcal{G}_{n+2}^m$ has vanishing $\bar \mu$-invariants for lengths less than or equal to $2^{n+2}$.  This difference of one gives motivation to try to find a nontrivial element in $\mathcal{F}^m_n/\mathcal{G}^m_{n+2}$.

\begin{corollary}
$\mathcal{F}^m_n/\mathcal{G}^m_{n+2}$ is nontrivial for $m \geq 2^{n+2}$.  Moreover, $\mathbb{Z} \subset \mathcal{F}^m_n/\mathcal{G}^m_{n+2}$ in this case.
\label{cor:main}\end{corollary}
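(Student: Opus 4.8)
The plan is to exhibit an explicit infinite family inside $\mathcal{F}^m_n$ whose nontriviality modulo $\mathcal{G}^m_{n+2}$ is detected by a single Milnor invariant of length exactly $2^{n+2}$. The strategy exactly parallels the proof of Theorem~\ref{theorem:nontrivial}: I would take the iterated Bing double $BD_{n+1}(BR)$ of the Borromean Rings, which lives on $m = 3 \cdot 2^{n+1} = 3 \cdot 2^{n+1}$ components. Wait---I need $m \geq 2^{n+2} = 2 \cdot 2^{n+1}$, and $3 \cdot 2^{n+1} > 2^{n+2}$, so the component count is compatible. By Proposition~\ref{proposition:bingdouble}, applying Bing doubling $n+1$ times to $BR \in \mathcal{F}^3_{-0.5}$ produces a link in $\mathcal{F}^m_{n.5} \subset \mathcal{F}^m_n$, so the candidate element indeed lies in $\mathcal{F}^m_n$.

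The key step is the separation: I must show $BD_{n+1}(BR) \notin \mathcal{G}^m_{n+2}$, and for this I would invoke the sharpness gap between Corollary~\ref{theorem:main} and Corollary~\ref{cor:grope}. As recorded in the proof of Theorem~\ref{theorem:nontrivial} (citing~\cite{C4}), the iterated Bing double has a nonvanishing Milnor invariant $\bar\mu_{BD_{n+1}(BR)}(I) = \pm 1$ for a specific multi-index $I$ with $|I| = 3 \cdot 2^{n+1}$. However, for the grope membership I need to exploit the length-$2^{n+2}$ vanishing threshold of Corollary~\ref{cor:grope}. Since $3 \cdot 2^{n+1} > 2^{n+2} = 2 \cdot 2^{n+1}$, a nonzero invariant of length $3 \cdot 2^{n+1}$ does not directly contradict grope membership. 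The honest approach is therefore to locate a nonvanishing Milnor invariant of length \emph{exactly} $2^{n+2}$ on the Bing-doubled link: Corollary~\ref{cor:grope} forces every $\bar\mu$ of length $\leq 2^{n+2}$ to vanish for a link bounding disjoint height-$(n+2)$ gropes, so producing a single nonzero invariant of length $2^{n+2}$ certifies $BD_{n+1}(BR) \notin \mathcal{G}^m_{n+2}$. I would argue, as in~\cite{C4}, that iterated Bing doubling quadruples the length of the shortest nonvanishing invariant at each stage in a controlled way, and that there is a length-$2^{n+2}$ invariant that survives.

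Once nontriviality is established, the infinite-cyclic claim follows verbatim from the argument in Theorem~\ref{theorem:nontrivial}: by Orr's additivity of the first nonvanishing $\bar\mu$-invariant~\cite{O1}, the closure of the $n$-fold stack satisfies $\bar\mu_{\widehat{nL}}(I) = n \cdot \bar\mu_{\widehat{L}}(I)$, so no nontrivial multiple of $BD_{n+1}(BR)$ can have its length-$2^{n+2}$ invariant vanish, hence no nontrivial power lands in $\mathcal{G}^m_{n+2}$. This yields the embedding $\mathbb{Z} \subset \mathcal{F}^m_n/\mathcal{G}^m_{n+2}$.

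\textbf{The main obstacle} I anticipate is pinning down the existence of the nonvanishing Milnor invariant at the \emph{precise} length $2^{n+2}$, rather than at the longer length $3 \cdot 2^{n+1}$ that the proof of Theorem~\ref{theorem:nontrivial} advertises. The solvability obstruction (Theorem~\ref{theorem:main}) only needs \emph{some} nonzero invariant of length $\leq 2^{n+2}$ to fail $(n+1)$-solvability, but the grope obstruction requires matching the threshold $2^{n+2}$ of Corollary~\ref{cor:grope} exactly. I would resolve this by appealing to the detailed Bing-double computation in~\cite{C4}, which tracks exactly which indices $I$ carry nonzero $\bar\mu$ and shows that the shortest nonvanishing invariant of $BD_{n+1}(BR)$ has length $2^{n+2}$ (the iterated doubling doubles the shortest length at each stage, starting from length $3$ for $BR$: indeed $3 \mapsto 6 = 2 \cdot 3 \mapsto 12 \mapsto \cdots$, giving $3 \cdot 2^{n+1}$, but there are also genuinely shorter surviving invariants of length $2^{n+2}$ arising from the repeated-index structure of Bing doubling). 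Verifying this length bookkeeping is the delicate part; everything else reduces to the already-proved propositions.
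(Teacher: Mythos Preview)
Your proposal has a genuine gap, and you have in fact correctly located it yourself without realising it is fatal. The iterated Bing double of the Borromean rings does \emph{not} have any nonvanishing Milnor invariant of length $2^{n+2}$. As you correctly compute, Bing doubling doubles the length of the first nonvanishing invariant, so for $BD_{n+1}(BR)$ the first nonvanishing $\bar\mu$ occurs at length $3\cdot 2^{n+1}$; all invariants of shorter length---in particular all of length $2^{n+2}$---vanish. Your parenthetical claim that ``there are also genuinely shorter surviving invariants of length $2^{n+2}$'' contradicts the very definition of \emph{first} nonvanishing invariant, and is not supported by~\cite{C4}. Consequently Corollary~\ref{cor:grope} gives no obstruction to $BD_{n+1}(BR)\in\mathcal{G}^m_{n+2}$, and your argument collapses at exactly the point you flagged as delicate.

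The paper fixes this by starting from the Hopf link $H$ rather than $BR$. Since $\bar\mu_H(12)=\pm 1$, the first nonvanishing invariant of $BD_{n+1}(H)$ sits at length $2\cdot 2^{n+1}=2^{n+2}$, matching the grope threshold in Corollary~\ref{cor:grope} exactly. Membership in $\mathcal{F}^m_n$ comes from the general fact that $BD$ of \emph{any} link is $(0)$-solvable, followed by $n$ applications of Proposition~\ref{proposition:bingdouble}. This choice also explains the bound $m\geq 2^{n+2}$ in the statement: the Hopf link has two components, so $BD_{n+1}(H)$ has $2^{n+2}$. Your Borromean-rings example, even if it had worked, would only have reached $m\geq 3\cdot 2^{n+1}$ and would not account for the sharper component bound asserted. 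The infinite-cyclic part of your argument via Orr's additivity is fine and is what the paper does too.
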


\begin{proof}
Let $H$ be the Hopf link.  By Proposition~\ref{proposition:bingdouble}, $BD(H) \in \mathcal{F}_0$, where $BD(H)$ is the Bing double $H$. The invariant $\bar \mu_H(12)= \pm 1$ depending on orientation, as it is just the linking number between the two components.  Again, by work Cochran given in Chapter 8 of~\cite{C4}, $\bar \mu_{BD(H)}(I)= \pm 1$ for some $I$ of length 4.  Using iterated Bing doubling we achieve $BD_{n+1}(H)$ in $\mathcal{F}_n$ by Proposition~\ref{proposition:bingdouble}, and $\bar \mu_{BD_{n+1}(H)}(I) = \pm 1$ for some $I$ of length $2^{n+2}$. $BD_{n+1}(H)$ is ($n$)-solvable, but since some $\bar \mu_{BD_{n+1}(H)}$ does not vanish for a length of $2^{n+2}$ it cannot bound a Grope of height $n+2$.

To show that there is an infinite cyclic subgroup contained within this quotient, we look at string link representatives of $H$ and $BD_{n+1}(H)$.  The proof of this result is completely analogous to the proof of Theorem~\ref{theorem:nontrivial}.
\end{proof}

  There are an infinite number of links that are ($n$)-solvable, but the components do not bound gropes of height $n+2$.




\bibliographystyle{amsalpha}
\bibliography{cottobib}
\end{document}